\DeclareMathOperator{\cf}{cf}
\DeclareMathOperator{\dr}{dr}
\DeclareMathOperator{\chr}{\chr}
\numberwithin{equation}{section}
\def\slashedarrowfill@#1#2#3#4#5{%
  $\m@th\thickmuskip0mu\medmuskip\thickmuskip\thinmuskip\thickmuskip
   \relax#5#1\mkern-7mu%
   \cleaders\hbox{$#5\mkern-2mu#2\mkern-2mu$}\hfill
   \mathclap{#3}\mathclap{#2}%
   \cleaders\hbox{$#5\mkern-2mu#2\mkern-2mu$}\hfill
   \mkern-7mu#4$%
}
\def\rightslashedarrowfilla@{%
  \slashedarrowfill@\relbar\relbar{\raisebox{1.2pt}{$\scriptscriptstyle\diagup$}}\rightarrow}
\newcommand\xslashedrightarrowa[2][]{%
  \ext@arrow 0055{\rightslashedarrowfilla@}{#1}{#2}}
\def\rightslashedarrowfillb@{%
  \slashedarrowfill@\relbar\relbar/\rightarrow}
\newcommand\xslashedrightarrowb[2][]{%
  \ext@arrow 0055{\rightslashedarrowfillb@}{#1}{#2}}
\def\rightslashedarrowfillc@{%
  \slashedarrowfill@\relbar\relbar{\raisebox{.12em}{\tiny/}}\rightarrow}
\newcommand\xslashedrightarrowc[2][]{%
  \ext@arrow 0055{\rightslashedarrowfillc@}{#1}{#2}}
\tikzset{nomorepostaction/.code=\let\tikz@postactions\pgfutil@empty}
\title{Balanced independent sets in graphs omitting large cliques}
\date{\today}
\subjclass[2010]{05C55, 05C63, 05C69}
\keywords{independent transversal, balanced, partition, Hanson, $K_n$-free, orthogonality graph}
\author{C. Laflamme}
\address[C. Laflamme]{Mathematics \& Statistics, University of Calgary, Calgary, AB, Canada}
\email{laflamme@ucalgary.ca}
\author{A. Aranda Lopez}
\address[A. Aranda Lopez]{Mathematics \& Statistics, University of Calgary, Calgary, AB, Canada}
\email{andres.aranda@gmail.com}
\author{D. T. Soukup}
\address[D. T. Soukup]{Universit\"at Wien
Kurt G\"odel Research Center for Mathematical Logic
W\"ahringer Strasse 25
1090 WIEN
AUSTRIA}
 \email[Corresponding author]{daniel.soukup@univie.ac.at}
  \urladdr{http://www.logic.univie.ac.at/~soukupd73/}
\author{R. Woodrow}
\address[R. Woodrow]{Mathematics \& Statistics, University of Calgary, Calgary, AB, Canada}
\email{woodrow@ucalgary.ca}
\newcommand{\vareps}{\varepsilon}
\newcommand{\half}{H_{\oo,\oo}}
\newcommand{\empt}{E_{\oo,\oo}}
\newtheorem{prop}{Proposition}[section]
\newtheorem{lemma}[prop]{Lemma}
\newtheorem{cor}[prop]{Corollary}
\newtheorem{theorem}[prop]{Theorem}
\newtheorem{claim}[prop]{Claim}
\newtheorem{obs}[prop]{Observation}
\newtheorem{tclaim}{Claim}[prop]
\newtheorem{tlemma}{Lemma}[prop]
\newtheorem{prob}[prop]{Problem}
\newcommand{\mf}[1]{\mathfrak{#1}}
\newcommand{\mc}[1]{\mathcal{#1}}
\newcommand{\mb}[1]{\mathbb{#1}}
\newcommand{\oo}{\omega}
\newcommand{\omg}{\omega_1}
\newcommand{\ran}{\text{ran}}
\newcommand{\NN}{\mb N}
\newcommand{\setm}{\setminus}
\newcommand{\bij}{\hookrightarrow\mathrel{\mskip-14mu}\rightarrow}
\newcommand{\barr}{\xrightarrow[\textmd{ind}]{\textmd{bal}}}
\theoremstyle{definition}
\newtheorem{dfn}[prop]{Definition}
\newtheorem*{rep@theorem}{\rep@title}
\newcommand{\newreptheorem}[2]{%
\newenvironment{rep#1}[1]{%
 \def\rep@title{#2 \ref{##1}}%
 \begin{rep@theorem}}%
 {\end{rep@theorem}}}
\begin{document}

\begin{abstract} Our goal is to investigate a close relative of the independent transversal problem in the class of infinite $K_n$-free graphs: we show that for any infinite $K_n$-free graph $G=(V,E)$ and $m\in \mb N$ there is a minimal $r=r(G,m)$ such that for any balanced $r$-colouring of the vertices of $G$ one can find an independent set which meets at least $m$ colour classes in a set of size $|V|$. Answering a conjecture of S. Thomass\'e, we express the exact value of $r(H_n,m)$ (using Ramsey-numbers for finite digraphs), where $H_n$ is Henson's countable universal homogeneous $K_n$-free graph. In turn, we deduce a new partition property of $H_n$ regarding balanced embeddings of bipartite graphs: for any finite bipartite $G$ with bipartition $A,B$, if the vertices of $H_n$ are partitioned into two infinite classes then there is an induced copy of $G$ in $H_n$ such that the images of $A$ and $B$ are contained in different classes.

%The question whether a vertex partitioned graph contains an independent transversal is well investigated and most results connect the maximal degree of the graph

\end{abstract}
\maketitle

\section{Introduction}

The initial goal of our project was to investigate the following problem: given a sparse graph with the vertices partitioned into equally large classes, can we find an independent set which meets a certain number of these classes in large sets? The well known independent transversal or `happy dean' problem (as entertainingly presented by P. Haxell \cite{haxell}) is a close relative of this question: imagine that the dean at your university is looking to form a committee so that each faculty is represented but, for the sake of reaching decisions in reasonable times, no two members of the committee hold strictly opposing opinions on certain topics. We model this problem by forming a graph with vertices corresponding to faculty members and edges connecting colleagues who cannot sit on the same committee. Now, we are looking for an independent set meeting each faculty.

The problem of finding independent transversals goes back to papers of B. Bollob\'as, P. Erd\H os, E. G. Strauss and E. Szemer\'edi in the 1970s \cite{boll1, boll2} and still is an active area of research (let us refer to \cite{haxell} again). While the strongest results for the happy dean problem come from assumptions on the maximum degree versus the number of classes (see e.g. \cite{haxellodd}), we set out to investigate infinite graphs avoiding cliques of a fixed finite size. The motivation to do this comes from a seemingly innocent conjecture of S. Thomass\'e \cite{thom}: suppose that $H_n$ is Henson's countable universal $K_n$-free graph and the vertices are partitioned into two infinite classes: red and blue. Is there an independent set which contains infinitely many red and infinitely many blue vertices at the same time? Upon answering this question, we realized that there is a rich theory of far more general results which also yield new exciting partition properties of Henson's graphs.

Let us summarize our work; the first main result of our paper is presented in Section \ref{firstsec}.

\begin{reptheorem}{ramseyupperbound}If $G=(V,E)$ is an infinite $K_n$-free graph (for some $n\in \mb N$) and $m$ is a natural number, then there is a finite $r$ such that whenever the vertices of $G$ are partitioned into $r$ sets of equal size then there is an independent set $A$ which meets at least $m$ classes in a set of size $|V|$.  
\end{reptheorem}

The minimal such $r$ will be denoted by $r(G,m)$; the above result says that given such a balanced partition, we are able to find a large independent set which meets several classes in a large set. In the proof of Theorem \ref{ramseyupperbound}, we actually bound $r(G,m)$ with a known Ramsey-number of directed graphs (denoted by $\dr(n,m)$); this bound is also shown to be tight for certain graphs.

 The finite counterpart of Theorem \ref{ramseyupperbound} is stated below.

\begin{reptheorem}{compactness}
 Suppose that $n,m\geq 2$ and $\ell \geq 1$. Then there is a finite $N=N(n,m,\ell)$ so that for every $K_n$-free graph $G$ and pairwise disjoint sets of vertices $V_i\subseteq V$ with $|V_i|\geq N$ for $i<r=\dr(n,m)$ there is an independent set $A$ so that $$|\{i<r:|V_i\cap A|\geq \ell\}|\geq m.$$
\end{reptheorem}

At this point, we don't have any information on the size of $N=N(n,m,\ell)$ since our proof is based on a compactness argument and Theorem \ref{ramseyupperbound}.

\medskip

Next, we prove general properties of the function $m\mapsto r(G,m)$ in Section \ref{propssec}. First, note that $r(G,m)$ might be defined for graphs $G$ which are not $K_n$-free for any $n\in \mathbb N$. Indeed, if all degrees are finite in an infinite graph $G$,  then $r(G,m)=m$ for all $m\geq 2$ (see Proposition \ref{findegobs}). We show various monotonicity properties of $m\mapsto r(G,m)$ and bound $r(G,m)$ using the chromatic number in Section \ref{propssec}.

Then, we proceed by calculating $r(G,m)$ for specific graphs $G$. In particular, in Section \ref{hensonsec}, we first focus on Henson's countable, universal $K_n$-free graph $H_n$: we show that $$r(H_n,m)=\dr(n,m-1)+1$$ in Theorem \ref{thomanswer}. In turn, $r(H_n,2)=2$ for all $2\leq n\in \mb N$ which answers the above cited question of S. Thomass\'e \cite[Conjecture 46]{thom}.

\medskip

Next, in Section \ref{balembedsec}, we use the equation $r(H_n,2)=2$ to deduce a new partition property of $H_n$. Recall that the graphs $H_n$ satisfy the following: whenever the vertices of $H_n$ are partitioned into $r$ classes then one can find a monochromatic copy of $H_n$; this deep result was proved for $n=3$ by P. Komj\'ath and V. R\"odl \cite{kope} and later for arbitrary $n\in \mb N$ by M. El-Zahar and  N. Sauer \cite{sauer}. In more recent developments, N. Dobrinen \cite{dobrinen} showed that $H_3$ has 'finite big Ramsey degrees'.

%: for each finite triangle-free graph $G$, there is a finite number $n(G)$ such that for any coloring $c$ of all copies of $G$ in $H_3$ into finitely many colors, there is a subgraph  $H$ of $H_3$ isomorphic to $H_3$ so that $c$ takes on no more than $n(G)$ many colors on $H$.  %In arrow notation, $H_n\to (H_n)^1_r$ for any $r\in\NN$. 

We apply our machinery to show that $H_n$ satisfies a strong partition property with regards to finite bipartite graphs as well.

\begin{reptheorem}{balembed}
Fix a finite bipartite graph $G$ with bipartition $A,B$. Whenever the vertices of $H_n$ are partitioned into two infinite classes then there is an induced copy of $G$ in $H_n$ such that the images of $A$ and $B$ are contained in different classes. 
\end{reptheorem}

%We are not aware of any references where such strong embeddings were achieved previously.

%In other words, $H_n$ is universal in a strong sense for finite bipartite graphs with fixed bipartition.

Finally, in Section \ref{specificsec}, we look at various well-known graphs e.g. shift graphs,  unit distance graphs, and orthogonality graphs on $\mb R^n$ with the aim to calculate the exact values of $r(G,m)$. In particular, we show that determining the value of $r(G,m)$ for orthogonality graphs is equivalent to an old problem of P. Erd\H os \cite{rosenfeld, stanley, alon}: find the size of the largest set $A$ in $\mb R^n$ so that any $B\in [A]^{m+1}$ contains two perpendicular vectors. 

Our paper concludes with a list of open problems in Section \ref{probsec}.

\bigskip
\subsection{Notations} In what follows, $r$ will always denote a nonzero natural number which we also identify with the set $\{0,1\dots r-1\}$, while $\kappa$ will always stand for an infinite cardinal. We use $[X]^k$ to denote the set of $k$-element subsets of $X$. The expression $A\subseteq^*B$ means $A\setminus B$ is finite; similarly, $A=^*B$ means that $A\setm B\cup B\setm A$ is finite.

For a graph $G=(V,E)$ and $W\subseteq V$, let $G[W]$ denote the subgraph of $G$ induced by $W$. If $v\in V$ then let $N_G(v)=\{u\in V:uv\in E\}$; if it leads to no confusion we might omit the subscript $G$ and write $N(v)$ only.

We say that a partition (or colouring) $\{V_i:i<r\}$ of a set $V$ is \textit{balanced} iff every colour class has size $|V|$ if $V$ is infinite, and $||V_i|-|V_j||\leq 1$ for all $i<j<r$ if $V$ is finite.

Let $\half$ and $K_{\oo,\oo}$ denote the \emph{half graph} and complete bipartite graph on $V=2\times \NN$ i.e. $E(\half)=\{(0,k)(1,\ell):k< \ell \in \NN\}$ and $E(K_{\oo,\oo})=\{(0,k)(1,\ell):k,\ell \in \NN\}$. Let $\empt$ denote the empty bipartite graph on $2\times \NN$. 

If $G$ is any graph then let $G[A,B]$ denote the graph on $A\cup B$ with edges $\{uv:u\in A,v\in B,uv\in E(G)\}$. Note that $G[A,A]$ coincides with $G[A]$ as defined above; we will use the latter notation for subgraphs of this form. Suppose that $G$ and $H$ are graphs and $A,B\subseteq V(G)$ and $A',B'\subseteq V(H)$. We write $$H[A',B']\hookrightarrow G[A,B]$$ if there is a 1-1 graph homomorphism which maps $A'$ into $A$ and $B'$ into $B$. If this homomorphism can be chosen surjective as well, we write  $H[A',B']\bij G[A,B]$

%We write $E_{\omega,\omega}\hookrightarrow G[A,B]$ or $H_{\omega,\omega}\hookrightarrow G[A,B]$ if there is an injective graph homomorphism of $E_{\omega,\omega}$ or $H_{\omega,\omega}$ respectively which maps $\omega\times \{0\}$ to $A$ and $\omega\times \{1\}$ to $B$. Similarly for $G[A,B]\hookrightarrow \half$. Note that we don't ask for embedding as induced subgraphs.

Let $G,H$ be two graphs. Define $G\otimes H$ on vertices $V(G)\times V(H)$ and let $(u,v)(u',v')\in E(G\otimes H)$ iff $u=u'$ and $vv'\in E(H)$ or $uu'\in E(G)$. In $G\otimes H$, each subgraph induced on a set of the form $\{u\}\times H$ is isomorphic to $H$, and for any $f:V(G)\rightarrow V(H)$, the subgraph induced on $\{(u,f(u)):u\in V(G)\}$ is isomorphic to $G$. For example, $G\otimes E_\omega$ is the graph we get by blowing up the vertices of $G$ into infinite independent sets, in particular $K_2\otimes E_\omega=K_{\omega,\omega}$. Here, $K_n$ denotes the complete graph on $n$ vertices.

%We will write $G\to(H)^1_r$ if for every $r$-colouring of the vertices of $G$ one can find a monochromatic copy of $H$.

\section{Finding balanced independent sets in general}\label{firstsec}

Our first goal is to show that given finite $n\geq2$ and $m\geq 1$, if $G$ is an infinite $K_n$-free graph then there is a minimal number $r=r(G,m)$  so that for every balanced $r$-partition of $V(G)$ there is an independent set $A$ such that $\{i<r:|A\cap V_i|=|V|\}$ has at least $m$ elements. Recall that whenever $G$ is an infinite $K_n$-free graph then $G$ contains an independent set of size of $|V|$; indeed, this is  an easy consequence of the famous Erd\H os-Dushnik-Miller theorem \cite{kunen}: every graph on $\kappa$ many vertices either contains an infinite clique or an independet set of size $\kappa$.

We need a few definitions first.

\begin{dfn} Let $\dr(n,m)$ denote the minimal $r$ so that any directed graph on $r$ vertices contains either a transitive set of size $n$ (i.e. a set of $n$ vertices in which the edge relation is transitive), or an independent subset of size $m$. 
\end{dfn}

Note that $R(n,m)\leq \dr(n,m)\leq R(n,n,m)$ where  $R(n_0\dots n_{k-1})$ denotes the minimal $r$ so that for any colouring of the pairs of $r$ with $k$ colours, one can find a $j$-homogeneous set of size $n_j$ for some $j<k$.

The numbers $\dr(n,m)$ were introduced by A. Gy\'arf\'as \cite{gyarfas} (denoted by $R^*(n,m)$ there). In \cite{gyarfas}, certain general bounds and values of $dr(n,m)$ for small $n,m$ are calculated. For the interested reader, we cite some of these results here:

\begin{enumerate}
 \item  $\dr(n,m)\leq 2 \dr(n-1,m)+ \dr(n,m-1)-1$,
\item  $\dr(3,3)=9, \dr(3,4)=15 $,
\item $2^{(n-1)/2}\leq \dr(n,2)\leq 2^{n-1}$,
\item $3^{(n-1)/2}\leq \dr(n,n)\leq 3^{2n-2}$, and
\item $c_1\frac{m^2}{\log m}\leq \dr(3,m)\leq c_2 \frac{m^2}{\log m}$.  
\end{enumerate}

Our first goal is to prove the following.

\begin{theorem}\label{ramseyupperbound}
 Let $n,m\geq 2$ and suppose that $G$ is an infinite $K_n$-free graph. Then $r(G,m)\leq \dr(n,m)$.

%If $r\geq R(n,n,m)$ and $\{V_i:i<r\}$ are pairwise disjoint infinite sets of the vertices of $G$ then there is an independent set $A$ so that $$\{i<r:|A\cap V_i|=\omega\}$$ has at least $m$ elements.
\end{theorem}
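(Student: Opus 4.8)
The plan is to refine the given balanced partition until the bipartite graphs between the classes are completely understood, and then to read the conclusion off the definition of $\dr(n,m)$. Write $\kappa=|V|$ and fix a balanced partition $\{V_i:i<r\}$ with $r=\dr(n,m)$, so $|V_i|=\kappa$ for every $i<r$. First I would clean up inside the classes: since $G$ is $K_n$-free it contains no infinite clique, so by the Erd\H os--Dushnik--Miller theorem each $G[V_i]$ has an independent subset of size $\kappa$; replacing each $V_i$ by such a set, I may assume every $G[V_i]$ is edgeless.

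The technical heart is a homogenisation between the classes. Fixing well-orderings of the $V_i$ of type $\kappa$, I would pass to subsets $U_i\in[V_i]^{\kappa}$ (keeping the induced orderings) so that for all $i\ne j$ the ordered bipartite graph $G[U_i,U_j]$ is \emph{canonical}: it equals $\empt$, or $K_{\oo,\oo}$, or $\half$ with one of $U_i,U_j$ playing the role of the side $\{0\}\times\NN$. This is done pair by pair: there are $\binom r2$ pairs, and each time I shrink two of the current sets to make the relevant bipartite graph canonical; since being edgeless inside a class and each of the canonical types are inherited by order-subsets, the earlier pairs stay canonical. Making one bipartite graph canonical is a canonical-Ramsey statement for ordered bipartite graphs --- for $\kappa=\oo$ it follows from finitely many uses of Ramsey's theorem (interleave two reservoirs, stabilising adjacencies as elements are chosen, then pigeonhole on the recorded $0/1$-labels), and for larger $\kappa$ one appeals to the appropriate partition relation for $\kappa$.

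With the $U_i$ in hand I would build an \emph{oriented} graph $D$ on vertex set $r$: put $i\to j$ exactly when $G[U_i,U_j]$ is non-empty with $U_i$ the distinguished side --- that is, $G[U_i,U_j]=\half$ with $U_i$ in the role of $\{0\}\times\NN$, or $G[U_i,U_j]=K_{\oo,\oo}$ and $i<j$ (complete pairs are oriented by index; the orientation is immaterial). Two translations then finish the argument. (i) If $I\subseteq r$ is independent in $D$, then every pair inside $I$ is a non-edge of $D$, which by the case analysis above forces $G[U_i,U_j]=\empt$ for all $i\ne j$ in $I$; together with edgelessness inside the classes this makes $A:=\bigcup_{i\in I}U_i$ an independent set of $G$ with $|A\cap V_i|=\kappa=|V|$ for every $i\in I$. (ii) If $T\subseteq r$ is a transitive set of size $n$ in $D$, enumerate it as $i_1,\dots,i_n$ so that $i_a\to i_b$ for all $a<b$, pick ordinals $\alpha_1<\dots<\alpha_n<\kappa$, and let $v_a$ be the $\alpha_a$-th element of $U_{i_a}$. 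For $a<b$ the arc $i_a\to i_b$ says that $\{i_a,i_b\}$ is not an empty pair and, if it is a half-graph pair, that $U_{i_a}$ is its distinguished side; in either case $v_av_b\in E$ because $\alpha_a<\alpha_b$, so $\{v_1,\dots,v_n\}$ is a copy of $K_n$ in $G$ --- impossible. Hence $D$ has no transitive set of size $n$, and since $r=\dr(n,m)$ it must have an independent set of size $m$; by (i) this is exactly the independent set we want, which proves $r(G,m)\le\dr(n,m)$.

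I expect the main obstacle to be the homogenisation step: one must canonise all $\binom r2$ bipartite graphs simultaneously without undoing earlier progress, and for uncountable $|V|$ the naive reservoir construction breaks at limit stages, so one needs either a stronger partition relation for $\kappa$ (or a reformulation that avoids homogenisation). A smaller point to pin down is that any order-homogeneous ordered bipartite graph reduces, after passing to subsets, to one of the four canonical types above (e.g.\ a ``$\leq$''-half-graph thins to a ``$<$''-half-graph, a matching thins to $\empt$, and a co-matching thins to $K_{\oo,\oo}$), so that $D$ is well defined; after that everything is a clean translation through the definition of $\dr(n,m)$.
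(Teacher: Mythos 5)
Your overall architecture is the same as the paper's (shrink each class to an independent set, refine the classes pair by pair, encode the outcome in a digraph on $r=\dr(n,m)$ vertices with no transitive $n$-set, and read off an independent $m$-set), and for \emph{countable} $G$ your canonisation step is essentially sound: one application of Ramsey's theorem with four colours to an interleaved enumeration, plus a pigeonhole for the diagonal, does reduce each $G[U_i,U_j]$ to one of $\empt$, $K_{\oo,\oo}$, $\half$, or the reversed half graph. (Two small repairs are needed even there: the exact half-graph form is \emph{not} inherited by order-subsets --- thinning both sides turns ``$k<\ell$'' into an adjacency condition that need not depend only on the order of the new indices --- so what you should carry along is the weaker, genuinely hereditary property that every vertex of the distinguished side is adjacent to a cofinite subset of the other side; and in step (ii) you should then build the $K_n$ greedily, picking $v_{a+1}\in U_{i_{a+1}}\cap\bigcap_{b\le a}N(v_b)$, rather than by choosing positions $\alpha_1<\dots<\alpha_n$ in advance.)

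The genuine gap is the uncountable case, which the theorem covers and which you flag but do not resolve. The ``appropriate partition relation for $\kappa$'' you would need is $\kappa\to(\kappa)^2_4$, and this fails in ZFC for every uncountable $\kappa$ that is not weakly compact (Sierpi\'nski-type colourings); concretely, the paper's own example at the end of Section \ref{propssec} --- the bipartite graph on $\omg\times 2$ built from Moore's L-space colouring --- is a rich pair admitting no uncountable empty, complete, or half-graph thinning in either orientation, so your canonical list is simply not attainable already at $\kappa=\omg$. The paper's proof avoids canonisation entirely: it only distinguishes ``the pair can be made empty'' from ``the pair is rich,'' and in the rich case it extracts (Lemma \ref{fatprops}(3)--(5)) a \emph{strongly essential} side, meaning all but $<\kappa$ of its vertices have $\kappa$-sized neighbourhoods in the other class, robustly under further shrinking. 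That much weaker structure exists for every infinite $\kappa$ (including singular $\kappa$, which needs a separate argument in Lemma \ref{fatprops}(3)) and is still enough to turn a transitive $n$-set of the digraph into a $K_n$ via Lemma \ref{fatprops}(6). So to make your proof cover all infinite $G$ you must replace ``canonical bipartite type'' by something like the paper's rich-pair/essential-side dichotomy; as written, the argument establishes the theorem only for countable graphs.
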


%Note that we already know that $r(n,2)=2$ by Theorem \ref{partthm}.

We start the proof by introducing a notion of largeness very useful in our context.

\begin{dfn}
  Let $G=(V,E)$ be a graph and suppose that $A,B\subseteq V$ are of size $|V|$. We say that $A,B$ is a \textit{rich pair} (in $G$) iff $G[A',B']$ is not empty whenever $A'\subseteq A$ and $B'\subseteq B$ are of size $|V|$.
\end{dfn}

For example, the two canonical classes of $H_{\oo,\oo}$ form a rich pair in $\half$. Now, we establish a few basic properties of rich pairs which will be applied then to prove Theorem \ref{ramseyupperbound}.

\begin{lemma}\label{fatprops}
 Suppose that $G$ is a graph on $\kappa$ vertices and $A_0,A_1\in [V]^\kappa$. Then,
\begin{enumerate}
 \item either $A_0,A_1$ is a rich pair or $K_{\kappa,\kappa}\hookrightarrow G[A_0,A_1]^c$;
\item if $A_0,A_1$ is a rich pair, then so is $A_0',A_1'$ where $A_i'\subseteq A_i$ are of size $\kappa$;
%\item if $\kappa=\oo$ and $A_0,A_1$ is a fat pair then $H_{\omega,\omega}\hookrightarrow G[A_0,A_1]$, or $H_{\omega,\omega}\hookrightarrow G[A_0,A_1]$;
\item if $A_0,A_1$ is a rich pair, then there is $i^*<2$ so that $|\{v\in A_{i^*}:|N(v)\cap A_{1-i^*}|<\kappa\}|<\kappa$. We call $A_{i^*}$ \emph{essential} in the pair $A_0,A_1$;
\item if $A_0,A_1$ is a rich pair then there are $i^*<2$ and $A_i'\subseteq A_i$ of size $\kappa$ so that for all $A_0''\subseteq A_0'$ and $A_1''\subseteq A_1$ of size $\kappa$, $A_{i^*}''$ is essential in the pair $A_0'',A_1''$. We say that $A_{i^*}'$ is the \emph{strongly essential} part of the pair $A_0',A_1'$.
\item if $A_0,A_1$ is a rich pair, $A_i$ is strongly essential and $A_i'\subseteq A_i$ of size $\kappa$ then $A_i'$ is strongly essential in the rich pair $A_0',A_1'$;
\item $K_n$ embeds into $G$ if there are sets of vertices $\{A_i:i<n\}$ of size $\kappa$ so that $A_i,A_j$ is a rich pair with $A_i$ being strongly essential for all $i<j<n$. 
\end{enumerate}

\end{lemma}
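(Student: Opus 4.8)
\textbf{Proof strategy for Lemma \ref{fatprops}(6).} The plan is to build an $n$-clique $\{v_i:i<n\}$ in $G$ with $v_i\in A_i$, choosing the vertices one at a time and, after each choice, shrinking the not-yet-used sets to size-$\kappa$ subsets lying entirely in the common neighbourhood of the vertices picked so far. Concretely, I would construct by recursion on $k\le n$ vertices $v_0,\dots,v_{k-1}$ together with sets $B^{(k)}_j$ for $k\le j<n$ maintaining the invariant: $B^{(0)}_j=A_j$; $B^{(k+1)}_j\subseteq B^{(k)}_j$ and $|B^{(k)}_j|=\kappa$; $v_k\in B^{(k)}_k$; and every vertex of $B^{(k)}_j$ is adjacent to each of $v_0,\dots,v_{k-1}$.

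For the recursive step, suppose $v_0,\dots,v_{k-1}$ and $(B^{(k)}_j)_{k\le j<n}$ are given. For each $j$ with $k<j<n$, the pair $A_k,A_j$ is rich with $A_k$ strongly essential, so the definition of ``strongly essential'' (Lemma \ref{fatprops}(4)) applies to the size-$\kappa$ subsets $B^{(k)}_k\subseteq A_k$ and $B^{(k)}_j\subseteq A_j$ --- which again form a rich pair by Lemma \ref{fatprops}(2) --- and yields that $E_j=\{v\in B^{(k)}_k:|N(v)\cap B^{(k)}_j|<\kappa\}$ has size $<\kappa$. Since $k<j<n$ ranges over a finite set, $\bigcup_{k<j<n}E_j$ still has size $<\kappa$, so I can pick $v_k\in B^{(k)}_k\setminus\bigcup_{k<j<n}E_j$; for this $v_k$ and every $j>k$ we get $|N(v_k)\cap B^{(k)}_j|=\kappa$, so setting $B^{(k+1)}_j:=N(v_k)\cap B^{(k)}_j$ preserves all clauses of the invariant (size $\kappa$, shrinking, and adjacency to $v_0,\dots,v_k$).

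After $n$ steps I would then have $v_0\in A_0$ and $v_k\in B^{(k)}_k$ for $1\le k<n$; for $i<j$ the vertex $v_j\in B^{(j)}_j$ is adjacent to all of $v_0,\dots,v_{j-1}$, in particular to $v_i$, and since $G$ has no loops the $v_i$ are pairwise distinct, so $\{v_i:i<n\}$ induces a copy of $K_n$.

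\textbf{Anticipated obstacle.} This is a single, fairly mechanical recursion; the only genuinely load-bearing point is that the shrinking step needs ``strongly essential'' rather than merely ``essential'', because the essential property must survive restriction to arbitrary size-$\kappa$ subpairs --- which is exactly what Lemma \ref{fatprops}(2) and (4) are designed to provide. The remaining bookkeeping is harmless: for each fixed $j$ the $B^{(k)}_j$ form a finite decreasing chain of size-$\kappa$ sets, and avoiding $\bigcup_{k<j<n}E_j$ is legitimate precisely because that union is finite.
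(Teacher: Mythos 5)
Your proof of part (6) is correct and takes essentially the same route as the paper's: both arguments pick the clique vertices one at a time, using strong essentiality (which by definition passages to all size-$\kappa$ subpairs) to find a vertex of the current $A_k$-side whose neighbourhood meets each remaining shrunken set in a set of size $\kappa$, and then intersect. The paper merely packages your explicit recursion as an induction on $n$ and invokes item (5) where you invoke (2) and (4) directly; this is a cosmetic difference.
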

\begin{proof}
 (1) and (2) are trivially true.
\begin{enumerate}
\setcounter{enumi}{2}
\item{Suppose the statement fails. We will find $A'\subseteq A_0,B'\subseteq A_1 $ of size $\kappa$ so that $G[A',B']$ is empty.  By (2), we may assume that $|N(v)\cap A_{1-i}|<\kappa$ for all $v\in A_i$ and $i<2$. We distinguish to cases: if $\kappa$ is regular then by a straightforward transfinite induction, one picks vertices $a_\xi\in A_0$ and $b_\xi\in A_1$ so that $a_\zeta,b_\zeta\notin N(a_\xi)\cup N(b_\xi)$ if $\xi<\zeta<\kappa$. Clearly,$G[A',B']$ is empty if $A'=\{a_\xi:\xi<\kappa\}$ and $B'=\{b_\xi:\xi<\kappa\}$ which contradict richness for $A_0,A_1$.

If $\kappa$ is singular then note that for every $\lambda<\kappa$ and $i<2$ there is $X\in [A_i]^\lambda$ so that $\sup\{|N(v)\cap A_{1-i}|:v\in X\}<\kappa$. Now, by induction on $\xi< \cf(\kappa)$ we select $X_\xi\subseteq A_0$ and $Y_\xi\subseteq A_1$ so that $\sup\{|X_\xi|:\xi<\cf(\kappa)\}=\sup\{|Y_\xi|:\xi<\cf(\kappa)\}=\kappa$ and $(X_\zeta\cup Y_\zeta)\cap (N(X_\xi)\cup N(Y_\xi))=\emptyset$ for all  $\xi<\zeta<\cf(\kappa)$. We let $A'=\bigcup\{X_\xi:\xi<\kappa\}$ and $B'=\{Y_\xi:\xi<\kappa\}$.
}
\item{Suppose that the choice of $i^*=0$ and $A_i'=A_i$ fails the assumption i.e. we can find $B_i\subseteq A_i$ so that $B_0$ is not essential; so without loss of generality $|N(v)\cap B_{1}|<\kappa$ for all $v\in B_0$. Now, if the choice $i^*=1$ and the $A_i'=B_i$ fails the assumption as well then we can find $C_i\subseteq B_i$ so that $C_1$ is not essential in $C_0,C_1$; so by further shrinking $C_1$, we can suppose that $|N(v)\cap C_{0}|<\kappa$ for all $v\in C_1$. However, now $|N(v)\cap C_{i}|<\kappa$ for all $v\in C_{1-i}$ for both $i=0,1$ which contradicts (3) as $C_0,C_1$ is a rich pair.
}
\item{This  follows from the definition of being strongly essential.}
\item{Finally, we prove (6) by induction on $n$: the case $n=2$ is trivial. Suppose that $\{A_i:i<n\}$ satisfies the assumptions above and $n\geq 3$. Using the fact that $A_0$ is strongly essential in the pair $A_0,A_i$ for $1\leq i<n$, we find a $v_0\in A_0$ so that $A_i'=N(v)\cap A_i$ has size $\kappa$ for $1\leq i<n$. Note that $A_i',A_j'$ is still a rich pair with $A_i'$ being strongly essential for all $1\leq i<j<n$ by (5).  Now, apply the inductive hypothesis for $\{A_i':1\leq i<n\}$ to find $v_i\in A_i'$ so that $\{v_i:1\leq i<n\}$ induces $K_{n-1}$. Hence, $\{v_i:i<n\}$ induces $K_{n}$.}
\end{enumerate}
\end{proof}

\begin{proof}[Proof of Theorem \ref{ramseyupperbound}] 

Suppose that $r=\dr(n,m)$ and fix a balanced partition $\{V_i:i<r\}$ of a $K_n$-free graph $G$ of size $\kappa$. 

List $[r]^2$ as $\{\{i_k,j_k\}:k<N\}$ so that $i_k<j_k$. Now, define a sequence $$W^{-1}_i\supseteq W^0_i\supseteq \dots \supseteq W^{N-1}_i$$ for all $i<r$ and a function $f:[r]^2\to 3$ simultaneously as follows: first, let $W^{-1}_i$ be an infinite independent subset of $V_i$ of size $\kappa$ (this exists by the Erd\H os-Dushnik-Miller theorem).

Now, given $(W^{k-1}_i)_{i<r}$, we do the following: let $W^k_i=W^{k-1}_i$ if $i\notin\{i_k,j_k\}$. Now, consider the pair $W^{k-1}_{i_k},W^{k-1}_{j_k}$. If $W^{k-1}_{i_k},W^{k-1}_{j_k}$ is not rich then find $W^k_{i_k}\subseteq W^{k-1}_{i_k}$ and $W^k_{j_k}\subseteq W^{k-1}_{j_k}$ of size $\kappa$ so that $G[W^k_{i_k},W^k_{j_k}]$ is empty. This can be done by Lemma \ref{fatprops} (1). 

If $W^{k-1}_{i_k},W^{k-1}_{j_k}$ is rich then find $W^k_{i_k}\subseteq W^{k-1}_{i_k}$ and $W^k_{j_k}\subseteq W^{k-1}_{j_k}$ of size $\kappa$ so that either $W^k_{i_k}$ or $W^k_{j_k}$ is strongly essential in  $W^k_{i_k},W^k_{j_k}$. Finally, we define $f$ to mark these cases separately:

\begin{enumerate}[(a)]
\item if $G[W^{k}_{i_k},W^{k}_{j_k}]$ is empty, then let $f(\{i_k,j_k\})=2$;
\item if $W^k_{i_k},W^k_{j_k}$ is rich and $W^k_{i_k}$ is strongly essential, then let $f(\{i_k,j_k\})=1$;
\item if $W^k_{i_k},W^k_{j_k}$ is rich and $W^k_{j_k}$ is strongly essential, then let $f(\{i_k,j_k\})=0$.
\end{enumerate}

%We apply Lemma \ref{fatprops} repeatedly: at step $k+1$, shrink $W^k_{i_s}$ and $W^k_{j_s}$ to $W^{k+1}_{i_s}$ and $W^{k+1}_{j_s}$ respectively, selecting witnesses for non-richness of size $\kappa$ in each pair of non-rich classes using (1); part (5) of Lemma \ref{fatprops} allows us to continue inductively. Let $W_i=W^{N-1}_i$ for all $i<r$.

Finally, let $W_i=W^{N-1}_i$ for all $i<r$. Note that  $G[W_i,W_j]$ is empty if $f(i,j)=2$, otherwise $W_i,W_j$ is rich with the side marked by $f(i,j)$ being strongly essential; indeed, Lemma \ref{fatprops} (6)  implies that if we dealt with the indices $i,j$ in step $k$ (i.e. $(i,j)=(i_k,j_k)$) then at later steps, when we possibly shrank $W^k_{i_k},W^k_{j_k}$, the same side remained strongly essential.

Now, we construct a directed graph $D$ on vertices $r$ as follows: let $ij\in E$ if $i<j$ and $f(\{i,j\})=1$ and $ji\in E$ if $i<j$ and $f(\{i,j\})=0$. Otherwise, $ij$ is not an edge.  We claim that there are no transitive sets of size $n$ in $D$. Indeed, if $\{i_k:k\in I\}$ is the increasing enumeration of a transitive set then simply apply  Lemma \ref{fatprops} (6) to $\{W_{i_k}:k\in I\}$ to find a copy of $K_n$ in $G$.

%it is easy to find $v_k\in V_{i_k}$ so that $v_k\in \bigcap_{l<k} N(v_l)$. In other words, $\{v_k:k<n\}$ is a copy of $K_n$. %The 0-homogenous case is similar, just take a decreasingly enumerated 0-homogenous $\{i_k:k<n\}$ and inductively select $v_k$ as before.

So, apply $r= \dr(n,m)$: there must be an independent set $\{i_k:k\in J\}$ of size $m$ in $D$ which means that $\bigcup\{W_{i_k}:k\in J\}$ is the desired independent set in $G$.
\end{proof}

Let us show the finite counterpart of Theorem \ref{ramseyupperbound}.

\begin{theorem}\label{compactness}
 Suppose that $n,m\geq 2$ and $\ell \geq 1$. Then there is a finite $N=N(n,m,\ell)$ so that for every $K_n$-free graph $G$ and pairwise disjoint sets of vertices $V_i\subseteq V$ with $|V_i|\geq N$ for $i<r=\dr(n,m)$ there is an independent set $A$ so that $$|\{i<r:|V_i\cap A|\geq \ell\}|\geq m.$$
\end{theorem}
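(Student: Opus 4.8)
The plan is to derive Theorem \ref{compactness} from Theorem \ref{ramseyupperbound} by a standard compactness (ultraproduct or König's lemma) argument. Suppose the statement fails for some fixed $n,m\geq 2$ and $\ell\geq 1$. Then for every natural number $N$ there is a $K_n$-free graph $G_N$ together with pairwise disjoint vertex sets $V^N_0,\dots,V^N_{r-1}$, each of size at least $N$, such that no independent set $A$ in $G_N$ satisfies $|\{i<r:|V^N_i\cap A|\geq \ell\}|\geq m$. By discarding extra vertices we may assume each $G_N$ has vertex set exactly $V^N_0\cup\dots\cup V^N_{r-1}$ and $|V^N_i|=N$ for all $i<r$; the failure property is preserved since we only removed vertices, hence only removed potential independent sets.

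Next I would pass to a limit object. Take a nonprincipal ultrafilter $\mc U$ on $\mb N$ and form the ultraproduct $G=\prod_{N\in\mb N} G_N/\mc U$; equivalently, one can run a König-lemma/diagonalization argument to extract a single infinite graph. The vertex set of $G$ naturally splits into $r$ classes $V_0,\dots,V_{r-1}$ (the ultraproducts of the $V^N_i$), each of which is countably infinite once we restrict to a suitable countable elementary substructure — here I would replace $G$ by a countable elementary submodel $G'\preceq G$ meeting each $V_i$ in an infinite set, so that $\{V_i\cap V(G'):i<r\}$ is a balanced partition of the countable $K_n$-free graph $G'$ into $r\geq \dr(n,m)=\dr(n,m)$ classes. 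Being $K_n$-free is a first-order property preserved by ultraproducts and elementary substructures, so $G'$ is $K_n$-free. Since $r\geq r(G',m)$ would follow from Theorem \ref{ramseyupperbound} once we know $r(G',m)\leq\dr(n,m)$ (and indeed $r=\dr(n,m)$), there is an independent set $A\subseteq V(G')$ with $|\{i<r:|A\cap V_i|=|V(G')|\}|\geq m$; in particular there are indices $i_0<\dots<i_{m-1}$ and infinitely many vertices of $A$ in each $V_{i_t}$. Pick $\ell$ vertices of $A$ in each of these $m$ classes, a total of $m\ell$ vertices spanning an independent set in $G'$, hence in $G$.

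Finally I would pull this finite configuration back down. A finite independent set of size $m\ell$ distributed $\ell$-per-class across $m$ of the $r$ classes is witnessed, in the ultraproduct, by a first-order formula (with parameters naming which class each vertex lies in); by Łoś's theorem the same configuration occurs in $G_N$ for $\mc U$-almost all $N$, in particular for some (indeed infinitely many) $N$. But that directly contradicts our choice of $G_N$ as a graph admitting no independent set meeting $\geq m$ of the classes in sets of size $\geq\ell$. This contradiction proves the theorem, and $N(n,m,\ell)$ can be taken to be the least $N$ that works.

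The main obstacle — really the only subtlety — is bookkeeping around the limit: making sure the limit graph is genuinely infinite with all $r$ classes infinite (so that Theorem \ref{ramseyupperbound} applies with $r=\dr(n,m)$ on the nose, using that $r(G',m)\leq\dr(n,m)$), and making sure the \emph{finite} witness that Theorem \ref{ramseyupperbound} produces — namely $m\ell$ vertices, not an infinite independent set — is the object one transfers back via Łoś / König, since only finite configurations transfer. The passage to a countable elementary submodel handles the first point cleanly; the fact that we only extract a finite independent set of prescribed size handles the second. Everything else (preservation of $K_n$-freeness, disjointness and size of the classes) is routine.
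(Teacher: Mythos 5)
Your argument is correct and follows essentially the same compactness strategy as the paper's proof: assume failure for every $N$, build a $K_n$-free limit graph via a nonprincipal ultrafilter, apply Theorem \ref{ramseyupperbound} to the resulting balanced $\dr(n,m)$-partition, extract a finite independent configuration of $m\ell$ vertices, and transfer it back to some $G_N$ for a contradiction. The only differences are cosmetic: the paper normalizes the vertex sets so that the limit graph lives directly on $\mathbb N$ (making the elementary-submodel step unnecessary) and verifies the transfer by a finite intersection of ultrafilter sets rather than invoking \L{}o\'s's theorem.
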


In other words, if $G$ is a $K_n$-free graph on at least $\dr(n,m)\cdot N(n,m,\ell)$ vertices and $\{V_i:i<\dr(n,m)\}$ is a partition of $V(G)$ with classes of size at least $N$, then there is an independent set $A$ that has at least $\ell$ elements in at least $m$ classes.

The proof follows a standard compactness argument.
\begin{proof}
 Fix $n,m\geq 2$ and $\ell \geq 1$. Suppose for a contradiction that for every $N$ there exists a $K_n$-free graph $G_N=(V^N,E^N)$ and a partition $V^N=\bigsqcup_{i<r} V^N_i$ with $|V^N_i|\geq N$ for $i<r=\dr(n,m)$ such that, given any independent set $A$, the inequality $$|\{i<r:|V^N_i\cap A|\geq \ell\}|<m$$ holds.

We may assume that $|V^N_i|=N$ and moreover that $V(G_N)=N\cdot r$ and $V^N_i=\{t\cdot r+i:t<N\}$. Take a nonprincipal ultrafilter $\mc U$ on $\NN$ and define a graph $G$ with $V(G)=\mb N$ as follows: $uv\in E(G)$ iff $$I_{uv}=\{N\in\NN: uv\in E(G_N)\}\in \mc U.$$

\begin{tclaim}
 $G$ is $K_n$-free.
\end{tclaim}
\begin{proof}
Suppose that a set of vertices $X$ induces a copy of $K_n$ in $G$. Then $I_{uv}\in\mc U$ for all $u\neq v\in X$ so  $$I=\bigcap \{I_{uv}:u\neq v\in X\}\in \mc U$$ as well; in particular, $I\neq\emptyset$. Clearly, $X$ induces a copy of $K_n$ in $G_N$ whenever $N\in I$.
\end{proof}

Let $V_i=\{t\cdot r+i:t\in \NN\}$ for $i<r$. By Theorem \ref{ramseyupperbound}, there is an independent $A^*$ and distinct $i_0\dots i_{m-1}$ so that $$|V_{i_j}\cap A^*|=\omega$$ for $j<m$. Select $A_j\in [V_{i_j}\cap A^*]^\ell$ for each $j<m$. Let $A=\bigcup\{A_j:j<m\}$.

It suffices to show the following claim in order to reach a contradiction and hence to finish the proof of the theorem.

\begin{tclaim}
 There is an $N$ so that $A$ is independent in $G_N$ and $$|\{i<r:|V^N_i\cap A|\geq \ell\}|\geq m.$$
\end{tclaim}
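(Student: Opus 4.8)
The key point is that $A$ is a \emph{finite} set which is independent in $G$, so the plan is to push this finitary information back through the ultrapower construction and then exploit that the finite classes $V^N_i$ exhaust the infinite classes $V_i$ as $N\to\infty$. Concretely: since $A$ is independent in $G$, for every pair $u\neq v\in A$ we have $uv\notin E(G)$, which by the definition of $G$ means $I_{uv}\notin\mc U$, i.e. the complement $J_{uv}=\NN\setm I_{uv}=\{N\in\NN: uv\notin E(G_N)\}$ lies in $\mc U$. As $A$ is finite, $[A]^2$ is finite, so $J=\bigcap\{J_{uv}:u\neq v\in A\}\in\mc U$; since $\mc U$ is nonprincipal, $J$ is infinite. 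For any $N\in J$ the set $A$ is independent in $G_N$.

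Next I would arrange that the sets $A_j$ fit inside the \emph{finite} classes. Recall $A_j\in[V_{i_j}\cap A^*]^\ell$ and $V_{i}=\{t\cdot r+i:t\in\NN\}$, while $V^N_i=\{t\cdot r+i:t<N\}$, so $V^N_i=V_i\cap(N\cdot r)$ is an initial segment of $V_i$. Since $A=\bigcup_{j<m}A_j$ is finite, choosing $N\in J$ with $N\cdot r>\max A$ — possible because $J$ is infinite — guarantees $A_j\subseteq V^N_{i_j}$ for every $j<m$. For this $N$ we then have $A$ independent in $G_N$ and, because $A_j\subseteq V^N_{i_j}\cap A$ with $|A_j|=\ell$ and the indices $i_0,\dots,i_{m-1}$ are distinct, $$|\{i<r:|V^N_i\cap A|\geq\ell\}|\geq m.$$ This contradicts the defining property of $G_N$ (that no independent set meets $\ell$-many points of $m$ of the classes), finishing the proof of the theorem.

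There is essentially no hard step here; it is a routine compactness/ultrafilter bookkeeping argument. The only thing one must be slightly careful about is the normalization made earlier — identifying $V(G_N)$ with $N\cdot r$ and $V^N_i$ with $\{t\cdot r+i:t<N\}$ — which is exactly what makes $V^N_i$ an initial segment of $V_i$ and hence lets a fixed finite subset of $V_i$ be captured by taking $N$ large. I would make sure to state this alignment explicitly so that the choice ``$N\cdot r>\max A$'' visibly does the job.
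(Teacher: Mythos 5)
Your proposal is correct and follows essentially the same route as the paper: intersect the cofinitely many complements $\NN\setm I_{uv}$ over the finitely many pairs from $A$ to get $J\in\mc U$, and use the normalization $V^N_i=\{t\cdot r+i:t<N\}$ to place each $A_j$ inside $V^N_{i_j}$. If anything, you are slightly more careful than the paper at the one delicate point — explicitly choosing $N\in J$ with $N\cdot r>\max A$ (possible since $J$ is infinite, $\mc U$ being nonprincipal), whereas the paper's phrasing suggests that $N\in J$ alone suffices.
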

\begin{proof}
Note that $\mb N\setm I_{uv}\in \mc U$ for all $u\neq v\in A$ and hence $$J=\bigcap \{\mb N\setm I_{uv}:u\neq v\in A\}\in \mc U$$ as well; in particular, $J\neq \emptyset$. Clearly, $A$ is independent in $G_N$ whenever $N\in J$. Also, $A_j\subseteq V_{i_j}$ and $N\in J$ implies that $A_j\subseteq V^N_{i_j}$ and so $\ell= |A_{j}|\leq |V^N_{i_j}\cap A|$ must hold for $j<m$.
\end{proof}
This completes the proof of the theorem.

\end{proof}

Finally, we prove that the bound $\dr(n,m)$ can be attained for $K_n$-free graphs $G$.

\begin{prop}
 Suppose that $n,m\geq 2$. Then there is a $K_n$-free graph $G$ so that $r(G,m)=\dr(n,m)$.
\end{prop}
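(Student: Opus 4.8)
The plan is to construct a $K_n$-free graph $G$ that encodes, in a single graph, an extremal directed graph for $\dr(n,m)$, so that the only independent sets meeting many classes in a large set correspond exactly to independent sets in that digraph. Fix $r=\dr(n,m)-1$ and, by minimality of $\dr(n,m)$, a directed graph $D$ on vertex set $r$ with no transitive set of size $n$ and no independent set of size $m$. I would build $G$ on vertex set $r\times\NN$, declaring the classes to be $V_i=\{i\}\times\NN$ for $i<r$; since $r<\dr(n,m)$ this witnesses $r(G,m)>r$, i.e. $r(G,m)\geq \dr(n,m)$, once we check $G$ is $K_n$-free, and the reverse inequality is Theorem \ref{ramseyupperbound}. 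The edges between the blocks $V_i$ and $V_j$ (for $i<j$) should be chosen to reflect the arc structure of $D$: if $ij\in E(D)$ put a half-graph $\half$ between $V_i$ and $V_j$ (with $V_i$ on the ``small index'' side), if $ji\in E(D)$ put a half-graph with $V_j$ on the small side, and if neither is an arc put the complete bipartite graph $K_{\oo,\oo}$ between them. Within each block $V_i$ put no edges. In the language of the paper, this is essentially $D$ with each vertex blown up, using $\half$ for arcs and $K_{\oo,\oo}$ for non-arcs.

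The two things to verify are that $G$ is $K_n$-free and that $r(G,m)>r$. For $K_n$-freeness: suppose $x_0,\dots,x_{n-1}$ span a clique in $G$. No two can lie in the same block $V_i$ (blocks are independent), so they lie in $n$ distinct blocks, say $V_{k_0},\dots,V_{k_{n-1}}$. For blocks joined by $K_{\oo,\oo}$ there is no constraint, but the point is that a half-graph between two blocks forces an order relation on the second coordinates of the two clique vertices in those blocks, and these order relations must be mutually consistent around the clique; analysing this shows the set $\{k_0,\dots,k_{n-1}\}$ would have to induce a transitive tournament-like configuration in $D$ on the arcs that are present, and more precisely one extracts a transitive subset of size $n$ in $D$, contradicting the choice of $D$. (This is exactly the mechanism used in the proof of Theorem \ref{ramseyupperbound}, run in reverse: rich pairs correspond to $K_{\oo,\oo}$ and to $\half$, and the strongly essential side to the direction of the half-graph; Lemma \ref{fatprops}(6) is the combinatorial core.) For $r(G,m)>r$: take the balanced partition $\{V_i:i<r\}$ and let $A$ be any independent set meeting the classes $V_{k_0},\dots,V_{k_{m-1}}$ each in an infinite set. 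Between two blocks $V_{k_s},V_{k_t}$ that $A$ meets infinitely, $A$ cannot contain an infinite $K_{\oo,\oo}$-pattern (that would be an edge), so those two blocks are \emph{not} joined by $K_{\oo,\oo}$; and an infinite independent subset of a $\half$ must be ``downward closed'' on exactly one side, so it is also impossible to have $A$ infinite on both sides of a half-graph — hence in fact $\{k_0,\dots,k_{m-1}\}$ is an independent set in $D$, of size $m$, contradiction.

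Let me say more carefully why an independent set cannot be infinite on both sides of a $\half$ sitting between two blocks. If $S\subseteq\{0\}\times\NN$ and $T\subseteq\{1\}\times\NN$ are both infinite and $S\cup T$ is independent in $\half$, then for every $(0,k)\in S$ and $(1,\ell)\in T$ we must not have $k<\ell$, i.e.\ $k\geq \ell$ for all such $k,\ell$; but $S$ infinite gives arbitrarily large $k$ — that's fine — the real obstruction is that $T$ infinite gives arbitrarily large $\ell$, so fixing any $(0,k)\in S$ we need $k\geq \ell$ for all $\ell$ with $(1,\ell)\in T$, impossible. So indeed a half-graph between two blocks kills the possibility that $A$ is infinite on both, and a complete bipartite graph obviously does too; thus the blocks $A$ meets infinitely form an independent set in $D$. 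I expect the main obstacle to be the $K_n$-freeness verification: one must check that the consistency conditions imposed by the half-graphs on a putative clique genuinely force a transitive set of size $n$ in $D$, rather than merely a clique in the underlying graph of $D$. The cleanest route is probably to argue contrapositively via Lemma \ref{fatprops}(6) — observe that $G$ is built so that $V_i,V_j$ is always either empty ($f$-value $2$), or a rich pair whose strongly essential side is dictated by the arc direction in $D$; then a $K_n$ in $G$ would, by (6), produce a transitive $n$-set in $D$. I would present the argument in that form to keep it short, after spelling out once that $\half$ and $K_{\oo,\oo}$ give rich pairs with the stated essential sides.
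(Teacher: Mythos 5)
Your overall strategy (blow up an extremal digraph $D$ on $\dr(n,m)-1$ vertices into infinite independent blocks, with half-graphs encoding the arcs) is the paper's strategy, but your edge rule for non-adjacent pairs is wrong in a way that breaks the construction. You put $K_{\oo,\oo}$ between $V_i$ and $V_j$ when neither $ij$ nor $ji$ is an arc of $D$; the paper puts \emph{no} edges there. With your rule the graph is in general not $K_n$-free. Concretely, take $n=m=3$: an extremal $D$ on $\dr(3,3)-1=8$ vertices cannot be a tournament (every tournament on $4$ vertices already contains a transitive triple), so it has a non-adjacent pair $u,w$; since $\{u,v,w\}$ must fail to be transitive for every third vertex $v$, after relabelling we get $u\to v\to w$ with no arc between $u$ and $w$. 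Then $(u,0),(v,1),(w,2)$ is a triangle in your $G$: the first two edges come from the half-graphs and the third from the $K_{\oo,\oo}$ you placed between the non-adjacent blocks. More generally, your $G$ contains a $K_n$ exactly when $D$ has an $n$-set on which the arcs can be linearly ordered consistently (an acyclic induced subdigraph), which is a much weaker condition than containing a transitive $n$-set, so $K_n$-freeness genuinely fails. Your appeal to Lemma \ref{fatprops}(6) cannot rescue this: that lemma gives a \emph{sufficient} condition for $K_n\hookrightarrow G$ (a consistent system of rich pairs), not the converse implication ``$K_n$ in $G$ yields a transitive $n$-set in $D$'' that you need here. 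Note also that Lemma \ref{fatprops}(1) associates the non-rich case with a complete bipartite graph in the \emph{complement}, i.e.\ with an empty bipartite graph in $G$ --- which is exactly the correct edge rule for non-arcs.

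There is a second symptom of the same error in your lower-bound argument: since in your $G$ \emph{every} pair of blocks carries either a half-graph or a $K_{\oo,\oo}$, your own reasoning shows that no independent set meets even two blocks in an infinite set, so the blocks you denote $\{k_0,\dots,k_{m-1}\}$ never exist and the appeal to ``$D$ has no independent $m$-set'' is vacuous. With the paper's rule (empty bipartite graph between non-adjacent blocks) everything clicks: an independent set can meet two blocks infinitely only if they are non-adjacent in $D$, so it meets at most $m-1$ blocks infinitely because $D$ has no independent $m$-set; and a copy of $K_n$ would pick one vertex per block with pairwise distinct, monotone second coordinates, forcing all arcs among those $n$ blocks to point forward and hence producing a transitive $n$-set in $D$. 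Your analysis of independent sets in a half-graph is correct and is the one ingredient you do need; replace the $K_{\oo,\oo}$ rule by ``no edges'' and the rest of your outline becomes the paper's proof.
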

\begin{proof}
 Let $r=\dr(n,m)-1$ and let $D$ be a digraph on the vertex set $\{0,\ldots,r-1\}$ without transitive sets of size $n$ or independent sets of size $m$. Define an $r$-partite graph $G$ on classes $V_i=\{i\}\times \oo$ for $i<r$ with 
\[
\{(i,s),(j,t)\}\in E(G)\Leftrightarrow (i,j)\in E(D)\textmd{ and } s<t.
\]
Note that if $A$ is independent and meets both $V_i$ and $V_j$ in infinitely many points then $G[V_i,V_j]$ is empty and hence ${ij},{ji}$ are not edges in $D$. In turn, as $D$ has no independent sets of size $m$, we cannot find an independent set $A$ which meets $m$ classes in infinitely many points. Hence $r(G,m)>r$.

Finally, let us prove that $G$ is $K_n$-free: suppose that $v_i=(i, k_i)\in V_i$ and $\{v_i:i\in I\}$ induces a copy of $K_n$ in $G$. Note that $k_i\neq k_j$ if $i\neq j\in I$. Furthermore, $k_i<k_j$ and $(i,k_i)(j,k_j)\in E(G)$ implies that ${ij}\in E(D)$. However this contradicts the fact that $D$ has no transitive sets of size $n$.

%Finally, we claim that the balanced partition $\{V_i:i<r\}$ witnesses that $r(G,m)>r$. Indeed, as $D$ has no independent sets of size $m$, we cannot find an independent set $A$ which meets $m$ classes in infinitely many points.
\end{proof}

\section{General properties of $r(G,\cdot)$}\label{propssec}

Our plan is to look at the function  $m\mapsto r(G,m)$ for an arbitrary infinite $G$ and deduce a few simple properties in general. In Theorem \ref{ramseyupperbound}, we showed that $r(G,m)$ exists for all $K_n$-free graphs $G$ and for arbitrary $m\in \mb N$. However, it makes perfect sense to study $r(G,m)$  for other graphs as well given that such a value (finite or infinite) can be defined. So whenever we write $r(G,m)$ we implicitly mean that $r(G,m)$ is defined (but $G$ is not necessarily $K_n$-free for some $n$).

Let us remind the kind reader that all graphs considered are infinite in this section unless otherwise stated. The next result achieves that all trees, locally finite graphs or planar graphs satisfy $r(G,m)=m$ for all $m\geq 2$.

\begin{prop}\label{findegobs}
  Suppose that $G$ is a countable \emph{flat graph} i.e.  for every infinite set of vertices $U$ and every natural number $n$ there is a finite set  of vertices  $S$ and infinite $U'\subset U$ so that all paths connecting two elements of $U'$ of length $<n$ contain an element from $S$. Then $r(G,m)=m$ for all $m\geq 2$.
\end{prop}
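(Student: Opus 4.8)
The plan is to prove the two inequalities $r(G,m)\le m$ and $r(G,m)\ge m$ separately; the lower bound is trivial and the whole content lies in the upper bound. For $r<m$ an $r$-partition has only $r<m$ classes, so no set at all can meet at least $m$ of them, whence $r(G,m)\ge m$; and once we show that $r=m$ works this also settles the implicit claim that $r(G,m)$ is defined. So it remains to prove: for every partition of $V(G)$ into $m$ infinite classes $V_0,\dots ,V_{m-1}$ there is an independent set $A$ with $|A\cap V_i|=\omega$ for all $i<m$. Equivalently, I will produce infinite sets $A_i\subseteq V_i$ such that each $A_i$ is independent and $G[A_i,A_j]$ is empty for all $i\ne j$; then $A=\bigcup_{i<m}A_i$ is as required.

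The first step extracts from each class an infinite independent subset carrying a strong degree-control property, and this is the only place flatness enters. Apply the definition of flatness to $U=V_i$ with $n=3$: there are a finite set $S_i$ and an infinite $V_i'\subseteq V_i$ such that every path of length $1$ or $2$ between two vertices of $V_i'$ meets $S_i$. Discarding the finitely many vertices of $V_i'\cap S_i$ (which preserves the property for the smaller set), I may assume $V_i'\cap S_i=\emptyset$. The length-$1$ clause then says $V_i'$ is independent; the length-$2$ clause says that every vertex outside $S_i$ has at most one neighbour in $V_i'$, since two neighbours $u,u'\in V_i'$ of a vertex $v\notin S_i$ would form a path $u\,v\,u'$ of length $2$ avoiding $S_i$. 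Both statements are inherited by every subset of $V_i'$ with the same finite set $S_i$, which is what makes the next step iterable.

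The second step sets $A_i:=V_i'$ and runs through the pairs $\{i,j\}\in[m]^2$ one at a time, shrinking the current reservoirs $A_i,A_j$ (which remain infinite, independent, and degree-controlled) so as to make $G[A_i,A_j]$ empty; emptiness for previously handled pairs is preserved automatically since we only ever delete vertices. To treat $\{i,j\}$: first delete the finitely many vertices of $A_j\cap S_i$ from $A_j$ and of $A_i\cap S_j$ from $A_i$. Now every vertex of $A_j$ has at most one neighbour in $A_i$ and symmetrically, so $G[A_i,A_j]$ is a partial matching, say with edges $\{x_k,y_k\}$ ($x_k\in A_i$, $y_k\in A_j$) indexed by $k\in K$. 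Deleting $y_k$ for even $k$ and $x_k$ for odd $k$ destroys every edge of this matching; both reservoirs stay infinite (if $K$ is finite this is clear, and if $K$ is infinite then $A_i$ still contains all $x_k$ with $k$ even and $A_j$ all $y_k$ with $k$ odd, together with all unmatched vertices). After all $\binom{m}{2}$ pairs have been processed, the resulting $A_i$ have the required properties, which completes the proof.

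The main obstacle, and the reason the flatness hypothesis is needed, is that in a flat graph a single vertex may still have infinite degree, so one cannot build $A$ by the naive greedy procedure of avoiding the neighbourhoods of the vertices already chosen. Flatness with $n=3$ is exactly the device that confines, relative to a suitable infinite subset of each class, the offending high-degree vertices to a finite set; the one mild subtlety is that each invocation of flatness yields only a single good infinite set, so the cross-class edges must be eliminated pair by pair rather than all at once. (Alternatively one could obtain the independence of the $V_i'$ from the Erd\H os--Dushnik--Miller theorem, noting that flat graphs contain no infinite clique, but the degree control genuinely requires flatness.)
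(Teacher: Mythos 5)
Your proof is correct and follows essentially the same route as the paper: flatness with $n=3$ confines to a finite set $S_i$ all vertices having more than one neighbour in $V_i'$, and the cross-class edges are then eliminated pair by pair. The only deviations are cosmetic --- you extract independence of $V_i'$ from the length-$1$ clause of flatness (the paper instead uses Ramsey's theorem and the absence of infinite cliques), and you destroy the resulting partial matchings by an explicit alternating deletion rather than by invoking Lemma~\ref{fatprops}~(1) for non-rich pairs; both variants are fine.
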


Flat graphs were introduced by  K. P. Podewski and M. Ziegler \cite{ziegler}; flat graphs include all trees, locally finite graphs, planar graphs or, more generally, graphs embeddable in surfaces of finite genus. Clearly, a flat graph might contain arbitrary large finite cliques but no infinite cliques.

\begin{proof}
 Suppose that $G$ is a countable flat graph with its vertices partitioned into $m\geq 2$ balanced classes $V_0,...,V_{m-1}$. Apply the definition of flatness with $n=3$ to find infinite $V_i'\subseteq V_i$ and finite $S_i$ such that every path of length 2 between elements of $V_i'$ goes through $S_i$. We may assume that each $V_i'$ is disjoint from the union of the $S_i$. Furthermore, shrink the $V_i'$ further so that there are no edges inside $V_i'$; this can be done since $G$ contains no infinite complete subgraph so Ramsey's theorem can be applied. 

Now, we claim that each pair $V_i', V_j'$ is not rich. Indeed, take any $x \in V_i'$ and note that $N(x)\cap V_j'$ contains at most one vertex for any $i\neq j<m$; otherwise $x$ would be in a path of length 2 between vertices of $V_j'$, contradicting the disjointness with the union of the $S_i$. Therefore, the pair cannot have a half-graph as a subgraph and is not rich. Now apply Lemma \ref{fatprops} (1) $\binom{m}{2}$ times, shrinking each $V_i'$, to extract an independent set that meets all classes in infinite sets.
\end{proof}

% \begin{obs}\label{findegobs} If all degrees are finite in $G$  then $r(G,m)=m$ for all $m\geq 2$.
% \end{obs}
% \begin{proof}
%  Suppose $G=(V,E)$ has size $\kappa$. Note that if $A\subseteq V$ is independent of size strictly smaller than $\kappa$ and $W\subseteq V$ has size $\kappa$ then there is $w\in W$ so that $A\cup \{w\}$ is still independent. Now, given a balanced partition $\{V_i:i<m\}$, we can inductively build an independent set of size $\kappa$ which meets all $V_i$ in a set of size $\kappa$.
% \end{proof}

\begin{obs}\label{indshrink}
 If $r(G,2)$ exists then every set of $|V|$ vertices contains an independent set of size $|V|$.
\end{obs}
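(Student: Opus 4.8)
The plan is to manufacture, out of a supposed ``bad'' set $W$, a balanced partition of $V$ that forces the independent set supplied by $r(G,2)$ to live inside $W$. Write $\kappa=|V|$ and fix $W\in[V]^\kappa$; let $r$ be a finite number witnessing that $r(G,2)$ is defined, so that every balanced $r$-colouring of $V$ admits an independent set $A$ with $|\{i<r:|A\cap V_i|=\kappa\}|\geq 2$. Observe first that $r\geq 2$, since with only one colour class no set can meet two classes.

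The partition I would use is the following: split $W$ into $r$ pairwise disjoint pieces $W_0,\dots,W_{r-1}$, each of size $\kappa$ (possible since $\kappa$ is infinite and $r$ is finite), then set $V_0=W_0\cup(V\setminus W)$ and $V_i=W_i$ for $1\leq i<r$. Because $|V\setminus W|\leq\kappa$, each class has size exactly $\kappa$, so $\{V_i:i<r\}$ is a balanced partition of $V$; and by construction $V_i\subseteq W$ for every $i\geq 1$. Applying the defining property of $r$ produces an independent $A$ meeting at least two of these classes in sets of size $\kappa$; since the collection of such indices has at least two members while at most one of them equals $0$, it must contain some $i\geq 1$. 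For that $i$, the set $A\cap V_i$ is a subset of the independent set $A$, has size $\kappa$, and is contained in $W$ — so $W$ contains an independent set of size $\kappa$, as desired.

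There is essentially no obstacle here. The only point to keep an eye on is that $A$ need not meet two of the ``$W$-classes'': it could meet $V_0$ together with a single $V_i$ for some $i\geq 1$. But meeting two classes out of $r$ already guarantees that one of the indices hit is $\geq 1$, and a single such class is enough to extract the desired independent subset of $W$. In particular no cardinal-arithmetic subtlety arises, since at no point do we have to distribute $A\cap(\text{a union of classes})$ among several classes — one class, which we have arranged to sit inside $W$, does the whole job.
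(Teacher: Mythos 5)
Your argument is correct: the paper states this as an observation without proof, and your construction --- absorbing $V\setminus W$ into one class of an $r$-piece splitting of $W$ and noting that at least one of the two large classes met by $A$ must be a class contained in $W$ --- is exactly the routine argument the authors leave to the reader. The side remarks (that $r(G,2)\geq 2$, and that one class inside $W$ suffices) are the right points to check, and you handle them correctly.
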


In particular, $G$ is $K_{|V|}$-free if $r(G,2)$ exists. Let us proceed with a few observations on monotonicity.

\begin{obs} Suppose that $G$ and $H$ are graphs of size $\kappa$ and $2\leq m\in \NN$.  \label{monobs}
 \begin{enumerate}
\item $r(G,m)\leq r(G,m+1)$;
  \item If $G$ and $H$ are isomorphic modulo a set of size $<\kappa$, then $r(G,m)=r(H,m)$;
\item If $H$ is a subgraph of $G$ and $|G|=|H|$ then either $r(G,m)\geq r(H,m)$ or $r(G,m+1)\geq r(H,m)+1$; in any case, $r(G,m+1)\geq r(H,m)$.
 \end{enumerate}
\end{obs}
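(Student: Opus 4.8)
The plan is to unwind the definition of $r(G,m)$ as the least $r$ for which every balanced $r$-partition of the vertex set admits an independent set meeting at least $m$ classes in a set of size $\kappa=|V|$, and in each item to manufacture, out of a ``bad'' balanced partition of one graph, a bad balanced partition of the other. I will freely use three trivial facts about the infinite setting, where ``balanced'' simply means every class has size $\kappa$: deleting or adjoining fewer than $\kappa$ vertices to a balanced partition leaves it balanced; removing fewer than $\kappa$ points from a set of size $\kappa$ leaves a set of size $\kappa$; and if a set meets at least $m+1$ classes of a partition in pieces of size $\kappa$, then it still meets at least $m$ of them after one distinguished class is set aside. With these in hand, item (1) is immediate: an independent set meeting at least $m+1$ classes in size $\kappa$ meets at least $m$ of them, so $r=r(G,m+1)$ already witnesses the property defining $r(G,m)$.

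For item (2), fix $X\in[V(G)]^{<\kappa}$ and $Y\in[V(H)]^{<\kappa}$ together with an isomorphism $\varphi$ from $G[V(G)\setminus X]$ onto $H[V(H)\setminus Y]$, and by symmetry prove only $r(G,m)\le r(H,m)$. Given a balanced $r$-partition $\{V_i:i<r\}$ of $V(G)$ with $r=r(H,m)$, I would transport the traces $V_i\setminus X$ across $\varphi$, reattach all of $Y$ to the class of index $0$, and observe that the outcome $\{W_i:i<r\}$ is a balanced $r$-partition of $V(H)$. Applying $r(H,m)$ to it yields an independent set of $H$ meeting at least $m$ of the $W_i$ in size $\kappa$; deleting the fewer than $\kappa$ vertices that lie in $Y$ and pulling back through $\varphi^{-1}$ produces an independent set of $G$ meeting at least $m$ of the $V_i$ in size $\kappa$. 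Hence $r(H,m)$ witnesses $r(G,m)\le r(H,m)$, and symmetry gives equality.

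For item (3) write $Z=V(G)\setminus V(H)$ and recall the one structural input: since $E(H)\subseteq E(G)$, every subset of $V(H)$ that is independent in $G$ is independent in $H$. I would first establish $r(G,m+1)\ge r(H,m)$ unconditionally. For any $r<r(H,m)$, pick a balanced $r$-partition $\{V_i:i<r\}$ of $V(H)$ admitting no independent set of $H$ that meets $m$ of its classes in size $\kappa$, form the balanced $r$-partition of $V(G)$ with $W_0=V_0\cup Z$ and $W_i=V_i$ for $i\ge 1$, and observe that an independent set of $G$ meeting at least $m+1$ of the $W_i$ in size $\kappa$ would meet at least $m$ of the classes $V_i$ in size $\kappa$, so its trace on $V(H)$ would contradict the choice of $\{V_i\}$. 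Thus no $r<r(H,m)$ witnesses the defining property of $r(G,m+1)$.

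The remaining dichotomy rests on a case split on the size of $Z$, which is the only step requiring genuine care and is exactly what blocks the cleaner statement $r(G,m)\ge r(H,m)$ from always holding. If $|Z|<\kappa$, the construction above — now with $Z$ distributed arbitrarily among the classes — shows that an independent set of $G$ meeting at least $m$ of the $W_i$ in size $\kappa$ already meets at least $m$ of the $V_i$ in size $\kappa$, whence $r(G,m)\ge r(H,m)$, the first alternative. If $|Z|=\kappa$, set $s=r(H,m)-1$, which is at least $1$ because $r(H,m)\ge m\ge 2$, take a balanced $s$-partition $\{V_i:i<s\}$ of $V(H)$ with no independent set meeting $m$ of its classes in size $\kappa$, and adjoin $Z$ as a brand-new class $W_s$ to get a balanced $(s+1)$-partition of $V(G)$ with $s+1=r(H,m)$; an independent set of $G$ meeting at least $m+1$ of these classes meets at least $m$ of the old $V_i$, a contradiction, so $r(G,m+1)\ge r(H,m)+1$, the second alternative. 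In either case one alternative holds, and combining with item (1) yields the closing inequality $r(G,m+1)\ge r(H,m)$.
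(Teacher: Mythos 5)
Your proposal is correct and follows essentially the same route as the paper: items (1) and (2) are handled by the obvious trivial/transfer arguments, and for item (3) you perform exactly the paper's case split on whether $V(G)\setminus V(H)$ has size $<\kappa$ (absorb it into an existing class) or size $\kappa$ (adjoin it as a new class) applied to a bad $(r(H,m)-1)$-partition of $H$. The only difference is cosmetic: you also give a direct unconditional argument for the closing inequality $r(G,m+1)\geq r(H,m)$, which the paper instead reads off from the dichotomy together with item (1).
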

\begin{proof}
 (1) and (2) are trivial. 

To prove (3) suppose that $H$ is a subgraph of $G$ and $\{V_i\}_{i<r}$ is a balanced partition of $V(H)$ for $r=r(H,m)-1$ so that any independent set in $H$ meets at most $m-1$ classes in a set of size $\kappa$. If $V_r=V(G)\setm V(H)$ has fewer than $\kappa$ elements, then  $V_0\dots V_{r-2},V_{r-1}\cup V_r$ is a balanced partition of $G$ witnessing $r(G,m)>r$  i.e. $r(G,m)\geq r(H,m)$. Hence, $r(G,m+1)\geq r(H,m)$  by (1).

If $V_r$ has size $\kappa$ then $V_0\dots V_{r-2},V_{r-1}, V_r$ is a balanced partition of $G$ witnessing $r(G,m+1)>r+1$ i.e. $r(G,m+1)\geq r(H,m)+1$.
\end{proof}

Now, we present a simple idea to bound $r(G,m)$ from above.

\begin{lemma}\label{coverbound} Suppose that $G=(V,E)$ is a graph on $\kappa$ vertices, $F\in[V]^{<\kappa}$, and $V\setm F\subseteq \bigcup_{j<t}W_j$, where $|W_j|=\kappa$ and $t\in\NN$. Then $$r(G,m)\leq \sum_{j<t}(r(G[W_j],m)-1)+1 \text{ holds for all }2\leq m\in \NN.$$

\end{lemma}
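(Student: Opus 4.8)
The plan is to prove this by induction on the number of pieces $t$ in the cover, reducing the general case to the two-piece case $t=2$, which is the heart of the argument. So suppose first $V\setm F\subseteq W_0\cup W_1$ with $|W_0|=|W_1|=\kappa$, and set $r_j=r(G[W_j],m)$; we must show $r(G,m)\le r_0+r_1-1$. Given a balanced partition $\{V_i:i<r\}$ of $V$ with $r=r_0+r_1-1$, the idea is to intersect it with $W_0$ and $W_1$ separately. For each $i<r$, at least one of $|V_i\cap W_0|$, $|V_i\cap W_1|$ has size $\kappa$ (since $|V_i\setm F|=\kappa$ and $F$ is small); assign colour $i$ to side $0$ or side $1$ accordingly (breaking ties arbitrarily). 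By pigeonhole, one side — say side $0$ — receives at least $r_0$ colours; let $I_0$ be the set of those colours, so $|I_0|\ge r_0$. Now $\{V_i\cap W_0:i\in I_0\}$ is a family of $\kappa$-sized disjoint subsets of $W_0$; refine it to an honestly balanced partition of $W_0$ into exactly $r_0$ classes (merging extra classes, which only helps), and apply $r(G[W_0],m)=r_0$ to get an independent set $A\subseteq W_0$ meeting $m$ of these classes in $\kappa$-sized sets. Each such class sits inside some $V_i$, so $A$ witnesses the conclusion for the original partition. Hence $r(G,m)>r_0+r_1-2$ fails to be refutable by this partition, i.e. $r(G,m)\le r_0+r_1-1$.

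For the inductive step, write $V\setm F\subseteq \bigcup_{j<t}W_j = W_0\cup\bigl(\bigcup_{1\le j<t}W_j\bigr)$. Set $W=\bigcup_{1\le j<t}W_j$; by the inductive hypothesis applied to the graph $G[W]$ covered by $W_1,\dots,W_{t-1}$ (with empty exceptional set), $r(G[W],m)\le \sum_{1\le j<t}(r(G[W_j],m)-1)+1$. Then apply the two-piece case to the cover $V\setm F\subseteq W_0\cup W$, giving
\[
r(G,m)\le (r(G[W_0],m)-1)+(r(G[W],m)-1)+1\le \sum_{j<t}(r(G[W_j],m)-1)+1,
\]
as desired. (One should also dispatch the degenerate cases $t=0$, where $V=F$ has size $<\kappa$ and the statement is vacuous or trivial, and $t=1$, where it reduces to Observation \ref{monobs}(2) since $G$ and $G[W_0]$ agree modulo the small set $F$.)

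The main obstacle I anticipate is bookkeeping around the word \emph{balanced} and around the small exceptional set $F$: one must be careful that when a colour $i$ has $|V_i\cap W_0|=\kappa$ we are entitled to put all of $V_i\cap W_0$ (not just a piece) into the refined partition of $W_0$, and that merging several of the original classes into $r_0$ classes of a balanced partition of $W_0$ is legitimate — merging is fine because an independent set meeting $m$ of the \emph{coarser} classes in $\kappa$-sized sets automatically meets (at least) $m$ of the \emph{original} $V_i$'s in $\kappa$-sized sets, which is all we need. Also worth checking: in the infinite setting ``balanced'' just means every class has size $\kappa=|V|$, so refining and merging never runs into the finite $\pm1$ constraint. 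Everything else is a straightforward pigeonhole plus the definition of $r(\cdot,m)$.
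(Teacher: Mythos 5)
Your proof is correct and is essentially the paper's argument: the same pigeonhole (each colour class must meet some $W_j$ in a set of size $\kappa$, so some $W_{j_0}$ receives at least $r(G[W_{j_0}],m)$ such colours) followed by restricting the partition to $W_{j_0}$ and absorbing the small leftover. The paper simply runs this for general $t$ in one step rather than via induction through the two-piece case, but the content is identical.
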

\begin{proof}
Let $V\setm F=\bigcup_{j<t}W_j$ as above. Let $r=\sum_{j<t}(r(G[W_j],m)-1)+1$ and take any balanced $r$-partition of $V=\bigcup\{V_i:i<r\}$. Define $W_{j,i}= W_j\cap V_i$.

We claim that there is a $j<t$ such that $I_j=\{i<r:|W_{i,j}|=\kappa\}$ has at least $r(G[W_j],m)$ elements. Indeed, for any $i<r$ there is $j<t$ so that $W_{j,i}$ has size $\kappa$ since $V_i\subseteq \bigcup_{j<t}W_j\cup F$ and $|F|<|V_i|$. So $r\subseteq \bigcup_{j<t} I_j$. In turn, if $|I_j|\leq r(G[W_j],m)-1$ for all $j<t$ then $r\leq \sum_{j<t}(r(G[W_j],m)-1)$, contradicting the definition of $r$.

Now, suppose that $I_{j_0}=\{i_k:k<\ell\}$ contains at least $r(G[W_{j_0}],m)$ elements. Let $X=W_{j_0}\setm \bigcup_{k<\ell}W_{j_0,i_k}$. Note that $|X|<\kappa$ and $$W_{j_0}=W_{j_0,i_0}\cup\dots W_{j_0,i_{\ell-2}}\cup (W_{j_0,i_{\ell-1}}\cup X)$$ is a balanced partition of $W_{j_0}$ into $\ell$ pieces, so there must be an independent set $A\subseteq W_{j_0}$ which meets at least $m$ pieces in a set of size $\kappa$.  As $|X|<\kappa$, $A$ must meet at least $m$ of the sets $W_{j,i_k}\subseteq V_{i_k}$.
\end{proof}

\begin{cor}\label{chromcor}
 Suppose that $G$ is a graph with finite chromatic number $\chi(G)$. Then $$r(G,m)\leq \chi(G)\cdot(m-1)+1.$$
\end{cor}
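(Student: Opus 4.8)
The plan is to derive Corollary \ref{chromcor} directly from Lemma \ref{coverbound}. Let $k=\chi(G)<\infty$ and fix a proper colouring of $G$ with colour classes $W_0,\dots,W_{k-1}$, so that $V=\bigcup_{j<k}W_j$ and each $G[W_j]$ is edgeless. Some of these classes may have size $<\kappa$; let $F$ be the union of all such small classes, so $|F|<\kappa$ (a finite union of sets of size $<\kappa$), and after discarding those indices we may assume $V\setm F\subseteq \bigcup_{j<t}W_j$ with $|W_j|=\kappa$ for all $j<t\leq k$. This puts us exactly in the situation of Lemma \ref{coverbound}.

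Next I would compute $r(G[W_j],m)$ for each surviving $j$. Since $G[W_j]$ is an independent set of size $\kappa$, any balanced $r$-partition of $W_j$ into $r\geq m$ classes already yields an independent set (namely all of $W_j$) meeting all $r\geq m$ classes in sets of size $\kappa$; and with $r=m-1$ classes one obviously cannot meet $m$ classes. Hence $r(G[W_j],m)=m$. (Strictly, one should note $W_j$ is infinite; this is where the hypothesis that $G$ is infinite, in force throughout the section, is used, together with $m\geq 2$.) Plugging into Lemma \ref{coverbound} gives
\[
r(G,m)\leq \sum_{j<t}(r(G[W_j],m)-1)+1 = t(m-1)+1 \leq k(m-1)+1 = \chi(G)\cdot(m-1)+1,
\]
using $t\leq k$ and $m-1\geq 1$ in the last inequality.

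There is essentially no obstacle here; the only points needing a word of care are (i) checking that the leftover set $F$ of vertices in small colour classes is genuinely of size $<\kappa$ so that Lemma \ref{coverbound} applies, and (ii) the boundary computation $r(E_\kappa,m)=m$ for an edgeless graph of size $\kappa$, which is immediate. If one wants to avoid even discussing $F$, one can instead invoke Observation \ref{monobs}(2): adding or deleting $<\kappa$ vertices does not change $r(G,m)$, so we may as well assume every colour class has size $\kappa$ from the start. Either way the proof is a one-line application of the covering lemma.
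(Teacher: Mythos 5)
Your proof is correct and is essentially the paper's own argument: apply Lemma \ref{coverbound} to the colour classes of a proper $\chi(G)$-colouring, using $r(E_\kappa,m)=m$ for each (edgeless) class. The extra care you take with colour classes of size $<\kappa$ (absorbing them into $F$, or invoking Observation \ref{monobs}(2)) is a detail the paper leaves implicit, but the route is the same.
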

\begin{proof}
 Simply note that $r(E_{\kappa},m)=m$ where $E_\kappa$ is the empty graph on $\kappa$ vertices and apply Lemma \ref{coverbound}. 
\end{proof}

Note that the above argument actually gives $$r(G,m)\leq \min\{\chi(G[V\setm F]):F\in [V]^{<\kappa}\}(m-1)+1.$$

\begin{cor}\label{prodcor}
 Suppose that $G$ is finite graph on $N$ vertices and $H$ is arbitrary. Then $$r(G\otimes H,m)\leq N\cdot (r(H,m)-1)+1.$$
\end{cor}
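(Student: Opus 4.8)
The plan is to apply Lemma~\ref{coverbound} to the graph $G\otimes H$ with a natural cover by $N$ copies of $H$. Recall from the notation section that $V(G\otimes H)=V(G)\times V(H)$, and for each vertex $u\in V(G)$ the set $W_u=\{u\}\times V(H)$ induces a subgraph of $G\otimes H$ isomorphic to $H$. Since $G$ has exactly $N$ vertices, the sets $\{W_u:u\in V(G)\}$ form a partition of $V(G\otimes H)$ into $N$ pieces, each of size $|V(H)|=|V(G\otimes H)|$ (here we use that $H$ is infinite, so that $|V(G)\times V(H)|=|V(H)|$; the finite case is degenerate and one should note $G\otimes H$ is then finite, in which case either the claim is vacuous or handled by the finite version of the theory — but the intended reading is $H$ infinite).

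First I would set $\kappa=|V(H)|$, take $F=\emptyset$, and $t=N$, listing $V(G)=\{u_0,\dots,u_{N-1}\}$ and $W_j=W_{u_j}$. Then $V(G\otimes H)=\bigcup_{j<N}W_j$ with each $|W_j|=\kappa$, so Lemma~\ref{coverbound} gives
\[
r(G\otimes H,m)\leq \sum_{j<N}\bigl(r((G\otimes H)[W_j],m)-1\bigr)+1.
\]
Next I would observe that $(G\otimes H)[W_j]\cong H$ for every $j<N$, hence $r((G\otimes H)[W_j],m)=r(H,m)$ by Observation~\ref{monobs}(2) (isomorphic graphs have the same $r$-value; in fact they are isomorphic outright, not just modulo a small set). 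Substituting, the sum collapses to $N\cdot(r(H,m)-1)+1$, which is exactly the claimed bound. One should also remark that $r(H,m)$ being defined (finite or infinite) is what is implicitly assumed when we write the inequality, consistent with the convention stated at the start of Section~\ref{propssec}; if $r(H,m)$ is infinite the bound is trivially true.

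I do not expect a genuine obstacle here: the only point requiring a word of care is the cardinal arithmetic ensuring each $W_j$ has size $|V(G\otimes H)|$ (so that Lemma~\ref{coverbound} applies with this cover), which holds precisely because $H$ is infinite and $G$ is finite. A secondary bookkeeping point is to confirm that the induced subgraph on $\{u_j\}\times V(H)$ really is isomorphic to $H$ and not merely homomorphic to it — but this is immediate from the definition of $\otimes$, since within a fixed first coordinate the only edges are those coming from $E(H)$. So the proof is essentially a one-line application of Lemma~\ref{coverbound} together with the isomorphism invariance of $r(\cdot,m)$.
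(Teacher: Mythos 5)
Your proof is correct and is exactly the paper's argument: cover $V(G\otimes H)$ by the $N$ sets $\{u\}\times V(H)$, each inducing a copy of $H$, and apply Lemma \ref{coverbound} with $F=\emptyset$. The extra remarks about cardinal arithmetic and isomorphism invariance are sound but routine; no further changes are needed.
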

\begin{proof}
 Indeed, $G\otimes H$ is covered by $N$-many copies of $H$ and so Lemma \ref{coverbound} can be applied.
\end{proof}

\begin{cor}\label{halfcor}
For all $n\geq1$ and $m\geq2$, $r(K_n\otimes E_\omega,m)=n(m-1)+1$ and  $r(\half,m)=2(m-1)+1$. Moreover,  if $A,B$ is a rich pair in a graph $G$ then $$r(G[A,B],m)=2(m-1)+1.$$
\end{cor}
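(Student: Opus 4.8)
The plan is to prove the three assertions in turn, the last one being the heart of the matter since the first two are special cases. For the lower bound $r(G[A,B],m)\geq 2(m-1)+1$ where $A,B$ is a rich pair, I would exhibit a bad balanced $(2(m-1))$-partition: split $A$ into $m-1$ infinite pieces $A_0,\dots,A_{m-2}$ and $B$ into $m-1$ infinite pieces $B_0,\dots,B_{m-2}$, giving $2(m-1)$ classes in total. Suppose some independent set $I$ meets $m$ of these classes in sets of size $\kappa$; by pigeonhole it meets two classes on the same side, say $A_k$ and $A_{k'}$ with $k\neq k'$, in sets of size $\kappa$. But $A_k\cup A_{k'}\subseteq A$ contains no edges of $G[A,B]$ anyway, so this is not yet a contradiction — I need to be more careful. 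Instead, pair up the partition so that $I$ is forced to meet one $A_k$ and one $B_{k'}$ in large sets; since $I$ meets $m$ of the $2(m-1)$ classes and there are only $m-1$ classes on each side, $I$ must meet at least one $A_k$ and at least one $B_{k'}$ each in a set of size $\kappa$. Then richness (Lemma \ref{fatprops}(2), applied to the subsets $I\cap A_k$ and $I\cap B_{k'}$ of the rich pair $A,B$) yields an edge between them, contradicting independence of $I$. So $r(G[A,B],m)\geq 2(m-1)+1$.

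For the matching upper bound $r(G[A,B],m)\leq 2(m-1)+1$, I would invoke Lemma \ref{coverbound} with $V(G[A,B])=A\cup B$ covered by the two sets $W_0=A$, $W_1=B$ (here $F=\emptyset$, $t=2$). Each of $G[A,B][A]=G[A,A']$ restricted appropriately and $G[A,B][B]$ is an \emph{empty} graph — because in $G[A,B]$ the only edges go between $A$ and $B$ — so $r(G[W_j],m)=r(E_\kappa,m)=m$ for $j=0,1$, exactly as in the proof of Corollary \ref{chromcor}. Lemma \ref{coverbound} then gives $r(G[A,B],m)\leq (m-1)+(m-1)+1=2(m-1)+1$, completing the proof of the "moreover" clause.

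For $r(\half,m)=2(m-1)+1$: the two canonical classes of $H_{\oo,\oo}$ form a rich pair in $\half$ (noted right after the definition of rich pair), and $\half=\half[A,B]$ for these classes $A,B$, so this is an immediate instance of the general statement. For $r(K_n\otimes E_\omega,m)=n(m-1)+1$: the upper bound is Corollary \ref{prodcor} with $H=E_\omega$, $N=n$, using $r(E_\omega,m)=m$. For the lower bound, split each of the $n$ copies $\{i\}\times E_\omega$ into $m-1$ infinite pieces, giving $n(m-1)$ classes; an independent set in $K_n\otimes E_\omega$ can contain vertices from at most one of the $n$ copies (any two distinct copies are completely joined), hence meets at most $m-1$ of the $n(m-1)$ classes in any set — in particular fewer than $m$ — so $r(K_n\otimes E_\omega,m)>n(m-1)$.

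The main obstacle is the lower-bound argument in the "moreover" clause: one must check that the pigeonhole really does force $I$ to hit both a piece of $A$ and a piece of $B$ in a set of size $\kappa$, and then that Lemma \ref{fatprops}(2) legitimately applies to those two pieces (which it does, since subsets of size $\kappa$ of a rich pair again form a rich pair, and a rich pair is never edgeless). Everything else is bookkeeping on top of Lemma \ref{coverbound} and the already-established corollaries.
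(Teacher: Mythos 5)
Your proposal is correct and follows essentially the same route as the paper: the upper bounds come from Corollary \ref{prodcor} and Lemma \ref{coverbound} applied to the cover by the (independent) sides, and the lower bounds come from splitting each canonical class, respectively each side of the rich pair, into $m-1$ pieces of full size. Your explicit use of richness (via Lemma \ref{fatprops}(2)) to show that an independent set cannot meet a piece of $A$ and a piece of $B$ both in sets of size $\kappa$ is exactly the detail the paper compresses into ``the same argument works for $\half$ and the rich pair.''
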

\begin{proof}
First, $r(K_n\otimes E_\omega,m)\leq n(m-1)+1$ follows from Corollary \ref{prodcor}. On the other hand, if we partition each canonical class of $K_n\otimes E_\omega$ into $m-1$ infinite pieces then we get a partition of $K_n\otimes E_\omega$ into $n(m-1)$ independent sets so that no independent set $A$ intersects $m$ different pieces, so $n(m-1)+1\geq r(K_n\otimes E_\omega,m)$. The same argument works for $\half$ and the rich pair.
\end{proof}

Next, we show a somewhat surprising property of the function $m\mapsto r(G,m)$.

\begin{theorem}
If $r(G,m)=m$ for any $m\geq 3$ then $r(G,m)=m$ for all $m\geq 2$.
\end{theorem}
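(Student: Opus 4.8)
The plan is to show that a single equation $r(G,m_0)=m_0$ with $m_0\geq 3$ forces $G$ to contain \emph{no rich pair of full size} $\kappa:=|V|$, and then to recover $r(G,m)=m$ for every $m\geq 2$ directly from the absence of rich pairs. Two facts are free to use throughout: $r(G,m)\geq m$ always (an independent set meets at most as many classes as there are), and $r(G,2)$ is defined since $r(G,2)\leq r(G,m_0)$, so by Observation \ref{indshrink} every set of $\kappa$ vertices contains an independent subset of size $\kappa$; in particular a rich pair $A_0,A_1$ of full size must have $|A_0\cap A_1|<\kappa$ (otherwise $A_0\cap A_1$ would be a rich pair with itself, i.e.\ a $\kappa$-set with no independent $\kappa$-subset), so by Lemma \ref{fatprops}(2) we may always take such a pair to be disjoint.

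First I would prove the contrapositive of the key claim: if $A_0,A_1$ is a disjoint rich pair in $G$ with $|A_0|=|A_1|=\kappa$, then $r(G,m_0)\geq m_0+1$ for every $m_0\geq 3$. Set $R=V\setminus(A_0\cup A_1)$. If $|R|<\kappa$, replace $A_1$ by $A_1\cup R$: this is still a (disjoint) rich pair, since any $\kappa$-subset of $A_1\cup R$ meets $A_1$ in a $\kappa$-set, so we may assume $V=A_0\sqcup A_1$; then, writing $A_0=P_1\sqcup\dots\sqcup P_{m_0-1}$ into $\kappa$-sized pieces, $\{P_1,\dots,P_{m_0-1},A_1\}$ is a balanced $m_0$-partition, and any independent set meeting all $m_0$ classes in $\kappa$-sized sets meets both $A_0$ and $A_1$ in $\kappa$-sized sets, contradicting richness. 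If instead $|R|=\kappa$, split $R=R_1\sqcup\dots\sqcup R_{m_0-2}$ into $\kappa$-sized pieces and use the balanced $m_0$-partition $\{A_0,A_1,R_1,\dots,R_{m_0-2}\}$; the same contradiction applies. In either case the displayed partition witnesses $r(G,m_0)>m_0$.

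Hence, under the hypothesis, $G$ has no rich pair of full size. Now fix any $m\geq 2$ and a balanced $m$-partition $\{V_i:i<m\}$. By Observation \ref{indshrink} shrink each $V_i$ to an independent $V_i'\subseteq V_i$ of size $\kappa$. Enumerate $[m]^2$ and process the pairs one at a time: at the step for $\{i,j\}$ the current sets $W_i\subseteq V_i'$, $W_j\subseteq V_j'$ have size $\kappa$, hence do not form a rich pair (else $G$ would have one of full size), so by Lemma \ref{fatprops}(1) we may pass to $\kappa$-sized subsets with no edges between them. After $\binom{m}{2}$ steps the resulting sets $W_i$ are independent, pairwise non-adjacent, and of size $\kappa$, so $\bigcup_{i<m}W_i$ is an independent set meeting every $V_i$ in a set of size $\kappa$. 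This gives $r(G,m)\leq m$, and with $r(G,m)\geq m$ we conclude $r(G,m)=m$.

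The main obstacle is the first step, and specifically the possibility that $R=V\setminus(A_0\cup A_1)$ is large: one cannot merely combine Corollary \ref{halfcor} with Observation \ref{monobs}(3) to get $r(G,m_0)\geq 2m_0-3$, because those only see the bipartite part $G[A_0,A_1]$, whose vertex set need not be co-small in $V$, and because $2m_0-3=m_0$ exactly when $m_0=3$. The case distinction on $|R|$ above is what makes the reduction go through uniformly for all $m_0\geq 3$; everything else is routine manipulation of the definitions together with Lemma \ref{fatprops}.
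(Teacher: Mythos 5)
Your proof is correct, and its overall architecture matches the paper's: both arguments show that the hypothesis forbids rich pairs and then conclude via the paper's Lemma \ref{fatlemma} (a graph with no rich pairs in which $r(G,2)$ is defined has $r(G,m)=m$ for all $m\geq 2$); your second step is that lemma verbatim. The genuine difference is in how a rich pair is shown to contradict $r(G,m_0)=m_0$. The paper first invokes Observation \ref{monobs}(3) together with $r(G[A,B],m_0-1)=2m_0-3$ from Corollary \ref{halfcor} to force $m_0\leq 3$, and only then, for $m_0=3$, exhibits an explicit bad balanced partition via the same dichotomy on $|V\setminus(A_0\cup A_1)|$ that you use. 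You bypass the $2m_0-3$ computation entirely and build the bad balanced $m_0$-partition directly and uniformly for every $m_0\geq 3$: splitting $A_0$ into $m_0-1$ pieces when the remainder is small (after absorbing it into $A_1$), or splitting the remainder into $m_0-2$ pieces when it is large. This is slightly more elementary — it needs neither Corollary \ref{halfcor} nor the monotonicity observation — and your remark that $m_0\geq 2m_0-3$ yields a contradiction only for $m_0\geq 4$ correctly identifies why the paper must treat $m_0=3$ as a separate case. Your explicit reduction to a \emph{disjoint} rich pair via Observation \ref{indshrink} and Lemma \ref{fatprops}(2) is also a worthwhile addition: the paper tacitly treats $A,B,V\setminus(A\cup B)$ as a partition, which presupposes exactly this.
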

\begin{proof}
 
Let us start with a lemma about rich pairs.

\begin{lemma}\label{fatlemma}
 If $G=(V,E)$ has no rich pairs and $r(G,2)$ exists then $r(G,m)=m$ for all $m\geq 2$.

\end{lemma}
\begin{proof} 
Suppose that $\{V_i\}_{i<m}$ is a balanced partition. Find independent $V_i'\in[V_i]^{|V|}$ for each $i<m$; this can be done by Observation \ref{indshrink}. Apply the fact that $G$ has no rich pairs $\binom{m}{2}$-times to find $W_i\subseteq V_i'$ so that there is no edge between $W_i$ and $W_j$ if $i<j<m$. Now, $\bigcup_{i<m}W_i$ is the desired independent set.
\end{proof}

Finally suppose, that $r(G,m)=m$ for some $m\geq 3$. We claim that $G$ cannot have any rich pairs and hence we are done by Lemma \ref{fatlemma}. Indeed, suppose that $A,B$ is a rich pair; then $$m=r(G,m)\geq r(G[A,B],m-1)= 2(m-2)+1=2m-3$$ by Observation \ref{monobs} (3) and hence $m=3$. If $V\setm (A\cup B)$ has size $\kappa$ then $A,B,V\setm(A\cup B)$ is a balanced partition witnessing $r(G,3)>3$; a contradiction. If $V\setm (A\cup B)$ has size $<\kappa$ then $3=r(G,3)=r(G[A,B],3)=5$ (a contradiction again).
%Then, again by Observation \ref{monobs} (3) either $3=r(G,3)=r(G[A,B],3)=5$ (a contradiction) or $r(G,4) 
\end{proof}

As a trivial example, the fact that any finitely-branching tree $T$ has $r(T,m)=m$ for all $m\geq 2$ follows from Lemma \ref{fatlemma}. The same holds for trees of cardinality $\kappa$ such that each vertex has degree at most $\lambda$ for some $\lambda<\kappa$.

We showed that the reason $r(G,m)$ is bigger than $m$ for any $m$ is because there is a rich pair in $G$ in which case $r(G,m)\geq 2m-3$ for all $2\leq m\in \NN$. 

Finally, let us prove that rich pairs in countable graphs are rather easily detected; this result will be applied in the next section as well.

\begin{lemma} \label{halfembed}
 If $A,B$ is a rich pair in a countable graph $G$ then $H_{\omega,\omega}\hookrightarrow G[A,B]$, or $H_{\omega,\omega}\hookrightarrow G[B,A]$.
\end{lemma}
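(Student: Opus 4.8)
The plan is to build a half-graph one vertex at a time, alternating sides, using the definition of richness at every stage to keep a ``reservoir'' of size $\omega$ available on each side. Since $G[A,B]$ is countable, fix enumerations $A=\{a_k:k<\omega\}$ and $B=\{b_k:k<\omega\}$ purely to break ties during the construction. First I would invoke Lemma \ref{fatprops}(4) to pass to subsets and a fixed essential side: there are $i^*<2$ and infinite $A_0'\subseteq A$, $A_1'\subseteq B$ so that for every pair of infinite further subsets, the $i^*$-th side is essential. Relabelling if necessary, assume $i^*=0$; so along $A_0'$ we will always have ``most vertices see infinitely much of the other side''. It is enough to find $H_{\omega,\omega}\hookrightarrow G[A_0',A_1']$, since $A_0'\subseteq A$ and $A_1'\subseteq B$.

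The core of the construction is a recursion producing, at stage $s$, a finite partial half-graph on vertices $x_0,\dots,x_{s-1}$ (the even-indexed ones mapped into $A_0'$, the odd-indexed ones into $A_1'$ — or whatever bipartite pattern one prefers) together with two infinite ``tails'' $A_0^s\subseteq A_0'$ and $A_1^s\subseteq A_1'$, decreasing in $s$, disjoint from the finitely many vertices chosen so far, such that the already-chosen vertices on one side are adjacent to \emph{all} of the opposite tail and non-adjacent to the opposite already-chosen vertices, while vertices on the same side as the tail have no edges among themselves. To extend, say we want to add a new vertex on side $0$: pick $x_s\in A_0^s$ which is adjacent to infinitely much of $A_1^s$ — possible because $A_0^s,A_1^s$ is a rich pair by Lemma \ref{fatprops}(2) and the $0$-side is essential by our choice, so all but finitely many vertices of $A_0^s$ qualify; among those, pick the one with least index in the enumeration of $A$ to ensure every vertex of $A_0'$ is eventually considered (a standard back-and-forth bookkeeping that guarantees the construction does not stall, though strictly we only need \emph{some} legal choice). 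Then set $A_1^{s+1}=N(x_s)\cap A_1^s$, which is infinite, and shrink $A_0^{s+1}\subseteq A_0^s\setminus(\{x_s\}\cup(\text{stuff}))$ so that $x_s$ is non-adjacent to $A_0^{s+1}$ — again possible since $N(x_s)\cap A_0^s$ is co-infinite in $A_0^s$ (a vertex of $A_0'$ has small neighbourhood \emph{within its own side}; if not, we would be producing an infinite clique's worth of edges on side $0$, but we can always thin $A_0^s$ down to an independent set first using Ramsey, exactly as in the proof of Proposition \ref{findegobs}). The symmetric move handles new vertices on side $1$. After $\omega$ stages, $\{x_s:s<\omega\}$ with the intended bipartition realizes $H_{\omega,\omega}\hookrightarrow G[A_0',A_1']$, where the linear order on each side is the order of construction.

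The one genuine subtlety — and the step I expect to be the main obstacle — is guaranteeing that each new vertex $x_s$ we select on the essential side is adjacent to \emph{infinitely many} vertices of the current opposite tail $A_1^s$, because we want to use $N(x_s)\cap A_1^s$ as the new tail and it must stay infinite for the recursion to continue indefinitely. This is precisely where Lemma \ref{fatprops}(4) earns its keep: strong essentiality of side $0$ says that no matter how we shrink, the side-$0$ vertices with \emph{small} neighbourhood in the opposite side form a non-dominant ($<\omega$, i.e. finite) set, so at stage $s$ only finitely many vertices of $A_0^s$ are ``bad'', and we can always avoid them. Without the ``strongly'' in strongly essential one could get stuck after finitely many steps; with it, the recursion never halts. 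A minor secondary point is the thinning of each tail to kill same-side edges; this costs nothing since $G$ is $K_{\omega}$-free (implicitly, a rich pair can only exist in such a graph, cf. Observation \ref{indshrink}), so Ramsey's theorem applies and can be folded into the choice of $A_0'$, $A_1'$ at the very start. Assembling these pieces gives the lemma.
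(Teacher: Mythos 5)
Your core idea is sound and it is genuinely different from the proof in the paper. You invoke Lemma \ref{fatprops}(4) once at the outset to fix a strongly essential side, say the $A$-side; then at every stage essentiality of the current pair of tails guarantees that all but finitely many $x$ in the $A$-tail have infinite neighbourhood in the $B$-tail, so you may pick such an $x_s$ and replace the $B$-tail by $N(x_s)\cap(B\text{-tail})$; choosing one $v_\ell$ from each successive tail then yields $u_kv_\ell\in E(G)$ for all $k<\ell$, i.e.\ $\half\hookrightarrow G[A,B]$, and the case where the $B$-side is the strongly essential one gives $\half\hookrightarrow G[B,A]$ --- this is exactly where the disjunction in the statement comes from, so the word ``relabelling'' should be read with that in mind. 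The paper instead performs the dichotomy afresh at every stage: inside the current pair it tries to build an infinite empty bipartite pair, richness forces that attempt to halt after finitely many steps, and the halting produces a vertex on one side or the other dominating an infinite subset of the opposite side; a pigeonhole over which side contributes infinitely often gives the two alternatives. Front-loading the case split into a single application of Lemma \ref{fatprops}(4) is arguably cleaner.

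That said, the recursion invariant you actually state is inconsistent and should be deleted rather than repaired. You require each already-chosen vertex to be adjacent to all of the opposite tail \emph{and} non-adjacent to the opposite already-chosen vertices; but every new opposite vertex is drawn from that tail, hence is adjacent to all previously chosen vertices on the other side, and an object satisfying your invariant at every stage would in the end have no cross edges at all --- the empty bipartite graph, not $\half$. Only the first half of the invariant is needed. The non-adjacency requirements are irrelevant because $\hookrightarrow$ is defined in this paper as a 1-1 homomorphism, not an induced embedding: neither the non-edges $u_kv_\ell$ for $k\geq\ell$ nor the same-side non-edges of $\half$ have to be preserved. For the same reason the Ramsey thinning of the tails is unnecessary, and its justification is in any case false: a rich pair does not force $G$ to be $K_\omega$-free (any two infinite sets in the complete graph on $\omega$ form a rich pair), and Observation \ref{indshrink} concerns the existence of $r(G,2)$, which is not a hypothesis here. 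Had an induced copy been required, the genuinely hard step would be choosing each new $A$-side vertex outside $N(v_\ell)$ for the finitely many earlier $v_\ell$, and nothing in your setup guarantees this is possible; fortunately the lemma does not ask for it.
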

\begin{proof}
 We define disjoint finite $E_0,E_1\dots \subseteq A$ and $F_0,F_1\dots \subseteq B$ along with  infinite $A=A_{-1}\supseteq A_0\supseteq A_1\dots$ and $B=B_{-1}\supseteq B_0\supseteq B_1\dots$ as follows:
\begin{enumerate}[(i)]
 \item $G[E_n,F_n]$ is independent and $E_n\cup F_n\neq \emptyset$,
\item $E_n\cap A_n=\emptyset$, $F_n\cap B_n=\emptyset$,
\item if $E_n\neq \emptyset$ then there is $u_n\in E_n$ such that $B_n\subseteq N(u_n)\cap B_{n-1}$,
\item if $F_n\neq \emptyset$ then there is $v_n\in F_n$ such that $A_n\subseteq N(v_n)\cap A_{n-1}$.
\end{enumerate}
Given $A_n$ and $B_n$ we inductively select distinct $x_0\in A_n$, $y_0\in B_n$, $x_1\in A_n$, $y_1\in B_n$\dots so that $G[\{x_k:k<n\}\{y_k:k<n\}]$ is the empty graph and $A_n\setm \bigcup \{N(y_k):k<n\}$ and $B_n\setm \bigcup \{N(x_k):k<n\}$ are both infinite. This process must stop at some point as $A,B$ is a rich pair. If $n$ is minimal so that we can't choose $x_n$ then $N(x)\cap B_n$ is infinite for any $x\in A_n\setm \{x_k:k<n\}$. We pick any $u_{n+1}\in A_n\setm \{x_k:k<n\}$ and let $E_{n+1}=\{u_{n+1}\}$ and $F_{n+1}=\emptyset$, and we define $B_{n+1}=N(u_{n+1})\cap B_n$ and $A_{n+1}=A_n\setm E_{n+1}$.

If we can choose $x_n$ but $n$ is minimal so that we can't choose $y_n$ then $N(y)\cap A_n$ must be infinite for any $y\in B_n\setm \{x_k:k<n\}$. We finish the proof as before but now $F_{n+1}\neq\emptyset$.

Suppose we defined these sequences. If $E_n\neq \emptyset$ for infinitely many $n$ then $H_{\omega,\omega}\hookrightarrow G[A,B]$ and if $F_n\neq \emptyset$ for infinitely many $n$ then $H_{\omega,\omega}\hookrightarrow G[B,A]$.

\end{proof}

The case for uncountable graphs is much more subtle: let $f:[\omega_1]^2\to 2$ be J. Moore's L-space colouring \cite{Lspace} and define a bipartite graph $G$ by letting $V(G)=\omega_1\times 2$ with $(\alpha,i)(\beta,j)\in E(G)$ iff $\alpha<\beta$, $i=0,j=1$ and $f(\alpha, \beta)=1$. Recall that $f$ has the property that whenever $X,Y$ are uncountable subsets of $\omg$ and $i<2$ then there is $\alpha\in X, \beta\in Y$ so that $\alpha<\beta$ and $f(\alpha,\beta)=i$. Hence, the half graph on $\omg$ does not embed into $G$ while $A,B$ is still a rich pair.% Recall that $r(G,m)=2(m-1)+1$ by Lemma \ref{fatlemma}.

\section{Henson's $H_n$ and the values $r(H_n,m)$}\label{hensonsec}

The first specific graphs we look at in detail are $H_n$: the countable, universal homogeneous $K_n$-free graphs defined by Henson \cite{henson}. Aside from $H_n$ being homogeneous, we mention the following properties for future reference:

\begin{enumerate}
\item \label{ext} \emph{extension property:} for any disjoint finite sets of vertices $A,B\subset H_{n}$ where $B$ is $K_{n-1}$-free, there is a vertex $v$ so that $A\cap N(v)=\emptyset$ and $B\subseteq N(v)$  \cite{henson};
 \item \label{partprop}\emph{indivisibility}: $H_n\to (H_n)^1_r$ for any $r<\omega$ \cite{kope,sauer};
\item \label{edge} if $vw\notin E(H_n)$ then $v$ and $w$ has infinitely many common neighbours;
\item \label{nbhd} if $v\in V(H_{n+1})$ then the graph induced by $N_{H_{n+1}}(v)$ in $H_{n+1}$ is isomorphic to $H_n$.
\end{enumerate}

We remark that $H_n$ is the unique countable graph satisfying property (\ref{ext}) \cite{henson}. Let us state a simple lemma as well.

\begin{lemma}\label{coverclaim0}
 The vertices of $H_{n+1}$ are not covered by finitely many $K_n$-free induced subgraphs.
\end{lemma}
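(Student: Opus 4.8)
The plan is to argue by contradiction using the extension property (\ref{ext}) of $H_{n+1}$ together with a diagonalization that produces a large clique. Suppose $V(H_{n+1}) = \bigcup_{j<t} X_j$ where each induced subgraph $H_{n+1}[X_j]$ is $K_n$-free. I want to build, step by step, vertices $v_0, v_1, \dots, v_n$ forming a copy of $K_{n+1}$, which is impossible since $H_{n+1}$ is $K_{n+1}$-free. The key point is that at each stage the set of vertices ``available'' to extend the current partial clique will be infinite, and an infinite set must meet some $X_j$ in an infinite (hence nonempty, and in fact extendable) set.

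First I would make the following observation precise: if $W \subseteq V(H_{n+1})$ is infinite and $W \subseteq X_j$ for some fixed $j$, then $H_{n+1}[W]$ is $K_n$-free, so by the Erd\H{o}s--Dushnik--Miller / Ramsey argument (or simply because a $K_n$-free graph still contains arbitrarily large independent sets — but here we want more) one can still find inside $W$ a vertex with suitable extension behaviour. Actually the cleaner route is: since the partition is into finitely many pieces, repeatedly intersecting with the piece containing infinitely many elements keeps things infinite. Concretely, I would construct $v_0, \dots, v_n$ and a decreasing chain of infinite sets $C_{-1} = V(H_{n+1}) \supseteq C_0 \supseteq \dots \supseteq C_n$ such that $\{v_0, \dots, v_k\}$ induces $K_{k+1}$, each $C_k \subseteq N(v_\ell)$ for all $\ell \le k$, and — crucially — we only ever pick $v_{k+1}$ from inside $C_k$. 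To pick $v_{k+1}$: the set $C_k$ is infinite, so some piece $X_{j(k)}$ meets it in an infinite set $C_k'$; since $H_{n+1}[C_k']$ is $K_n$-free and infinite, and $H_{n+1}$ is homogeneous with the extension property, we can find $v_{k+1} \in C_k'$ together with an infinite $C_{k+1} \subseteq C_k' \cap N(v_{k+1})$ — here we use that $v_{k+1}$ has infinitely many neighbours inside $C_k'$, which holds because $H_{n+1}[C_k' \cup \{\text{a fresh vertex}\}]$ can realize the ``totally joined'' type over any finite $K_n$-free set by (\ref{ext}), or more directly by building the neighbourhood via a back-and-forth/extension step inside $C_k'$.

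The main obstacle is ensuring that the recursion does not get stuck because the relevant ``available'' set degenerates: we need that after restricting to a single piece $X_{j(k)}$ we can still find a vertex with infinitely many neighbours in that piece forming the next stage of the clique. This is where $K_n$-freeness of the piece is used delicately — it does NOT immediately give us a neighbour-rich vertex. The right way around this is to instead phrase the induction on $n$ using property (\ref{nbhd}): if $v \in V(H_{n+1})$ then $N_{H_{n+1}}(v)$ induces a copy of $H_n$, and the traces $X_j \cap N(v)$ partition this copy of $H_n$ into finitely many pieces. If each piece $X_j$ were $K_n$-free, then each $X_j \cap N(v)$ is a $K_{n-1}$-free induced subgraph of $H_n$, so by the inductive hypothesis (the statement for $H_n$, with $K_{n-1}$ in place of $K_n$) these finitely many pieces cannot cover $V(H_n) = N_{H_{n+1}}(v)$ — contradiction. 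The base case is $n=1$: $H_2$ is the random graph, and a single piece cannot be $K_1$-free (that would mean it is empty) so finitely many empty sets cannot cover the infinite vertex set; trivially true. This reduces the whole lemma to a clean induction, and the only thing to check carefully is that the inductive hypothesis is applied to the right graph ($H_n$) and the right clique size ($K_n$), which matches the statement verbatim after decrementing $n$.
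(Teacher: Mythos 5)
Your second, ``clean'' reduction contains a genuine gap: the claim that if each piece $X_j$ is $K_n$-free then each trace $X_j\cap N(v)$ is $K_{n-1}$-free is false. A copy of $K_{n-1}$ inside $X_j\cap N(v)$ extends by $v$ to a copy of $K_n$ inside $X_j$ only when $v$ itself belongs to $X_j$; for the pieces not containing $v$ you get no decrement of the forbidden clique size at all. The failure is already visible in the first nontrivial case $n=2$: there the hypothesis is that $H_3$ is covered by finitely many independent sets, $N(v)$ is an edgeless copy of $H_2$, and the traces $X_j\cap N(v)$ for $j$ different from the index of the piece containing $v$ are just arbitrary subsets of an edgeless graph --- finitely many of these can of course cover $N(v)$, so the induction yields no contradiction. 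Your first attempt (diagonalizing towards a copy of $K_{n+1}$) runs into the obstacle you yourself flag: a $K_n$-free piece need not contain a vertex with infinitely many neighbours inside that same piece, and greedy neighbourhood arguments of this kind cannot prove the lemma. Indeed the lemma is equivalent (via compactness and universality) to a vertex-Folkman-type statement --- for every $t$ there is a finite $K_{n+1}$-free graph that cannot be partitioned into $t$ many $K_n$-free parts; already for $n=2$ this is the existence of triangle-free graphs of arbitrarily large chromatic number --- so some genuinely nontrivial combinatorial input is unavoidable.

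The paper gets this input for free from the indivisibility property (\ref{partprop}) of $H_{n+1}$ (the El-Zahar--Sauer theorem): if $V(H_{n+1})=\bigcup_{j<t}X_j$, then some $X_j$ contains an induced copy of $H_{n+1}$, which in particular contains $K_n$, so $X_j$ is not $K_n$-free. If you wish to avoid citing indivisibility, you would instead have to import the vertex Folkman theorem (or construct, for each $t$, a finite $K_{n+1}$-free graph admitting no $K_n$-free $t$-partition) and embed that finite graph into $H_{n+1}$ by universality; a purely local induction on neighbourhoods will not close.
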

\begin{proof}
Suppose that $f:V(H_{n+1})\to r$ is a finite colouring so that $H_{n+1}[f^{-1}(i)]$ is $K_n$-free. Now, by indivisibility of $H_{n+1}$, (property (\ref{partprop}) above), there is a monochromatic copy of $H_{n+1}$. But this is impossible since $H_{n+1}$  is not $K_n$-free.
\end{proof}

\begin{obs}
 Let $G$ be a countably infinite $K_n$-free graph. Then $$r(G,m)\leq r(H_n,m+1)-1.$$
\end{obs}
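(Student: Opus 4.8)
The plan is to realise $G$ as an induced subgraph of $H_n$ whose complement is infinite, and then to re-run the ``infinite leftover'' case of the argument behind Observation \ref{monobs}~(3).

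First I would pass from $G$ to $G\sqcup E_\omega$, the disjoint union of $G$ with a countably infinite edgeless graph. This is again countable and $K_n$-free: a clique in a disjoint union is contained in a single component, so its size is at most $\max\{n-1,1\}<n$ (here I use $n\geq 2$, which is part of the standing hypotheses on $H_n$). Since $H_n$ is universal for countable $K_n$-free graphs, there is an induced copy of $G\sqcup E_\omega$ in $H_n$; identifying $G$ with the copy of its summand, I may assume $V(G)\subseteq V(H_n)$ with $G=H_n[V(G)]$ and with $W:=V(H_n)\setm V(G)$ infinite, since $W$ contains the copy of $E_\omega$. Note also that $r(G,m)$ and $r(H_n,m+1)$ are both finite by Theorem \ref{ramseyupperbound}.

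Next, set $r=r(G,m)-1$ and fix a balanced $r$-partition $\{V_i:i<r\}$ of $V(G)$ such that no independent set of $G$ meets $m$ of the classes in an infinite set; such a partition exists by the definition of $r(G,m)$. Then $\{V_0,\dots,V_{r-1},W\}$ is a balanced partition of $V(H_n)$ into $r+1$ infinite pieces, and I claim it witnesses $r(H_n,m+1)>r+1$. Indeed, if $A\subseteq V(H_n)$ were independent in $H_n$ and met $m+1$ of these pieces in infinite sets, then $A\cap V(G)$ would be independent in $G$ — here it is essential that $G$ is an \emph{induced} subgraph — and, since at most one of the $m+1$ pieces is $W$, the set $A\cap V(G)$ would meet at least $m$ of $V_0,\dots,V_{r-1}$ in infinite sets, contradicting the choice of the partition. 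Hence $r(H_n,m+1)\geq r+2=r(G,m)+1$, which is the desired inequality. (Alternatively, once the infinite-complement embedding is in place, this is exactly the conclusion of Observation \ref{monobs}~(3) in the case where the leftover class has full size.)

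I do not foresee a genuine difficulty; the one point to get right is the choice of embedding. Embedding $G$ into $H_n$ by bare universality could in principle yield a copy with finite (even empty) complement, and then one would only recover $r(G,m)\leq r(H_n,m+1)$; the detour through $G\sqcup E_\omega$ is what forces the extra $-1$.
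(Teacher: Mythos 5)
Your proof is correct and rests on the same key idea as the paper's: embed $G$ into $H_n$ as an induced subgraph with infinite complement and transfer a balanced partition across the embedding, the only logical difference being that you argue the contrapositive (a bad partition of $G$ yields a bad partition of $H_n$) while the paper argues directly (a good independent set in $H_n$ pulls back to one in $G$). The one genuine, if small, divergence is how the infinite complement is secured: the paper invokes indivisibility of $H_n$ (the El-Zahar--Sauer theorem) to find a copy of $H_n$ with infinite complement and embeds $G$ there, whereas your detour through $G\sqcup E_\omega$ needs only universality, which is a slightly more elementary route to the same configuration. One cosmetic remark: the induced-ness of the copy of $G$ is not actually what makes $A\cap V(G)$ independent in $G$ (any subgraph embedding would do for that direction, since independence only needs to be preserved downward); it is needed only so that the pulled-back partition of $G$ genuinely witnesses the failure for $G$ itself, which your setup already guarantees.
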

\begin{proof}
 Suppose that $G$ is an infinite $K_n$-free graph with a balanced partition $\{V_i:i<r\}$ where $r=r(H_n,m+1)-1$. Using the universality of $H_n$, embed $G$ into $H_n$ with a map $f$ so that $V(H_n)\setm \ran (f)$ is infinite (this is possible by indivisibility). 

Let $W_i=f[V_i]$ for $i<r$ and $W_r=V(H_n)\setm \ran (f)$. Now, $\{W_i:i\leq r\}$ is a balanced $r+1=r(H_n,m+1)$-partition of $H_n$ so there is an independent set $A$ such that $\{i\leq r:|W_i\cap A|=\omega\}$ has at least $m+1$ elements. Hence $$|\{i< r:|W_i\cap A|=\omega\}|\geq m.$$

Now $B=\bigcup\{f^{-1}(A\cap W_i):i<r\}$ is the independent set of $G$ which meets at least $m$ classes of the orginal partition $\{V_i:i<r\}$.
\end{proof}

%Note that $r(K_{n-1}\otimes E_\omega,2)=2n-1>2=r(H_n,2)$ while $K_{n-1}\otimes E_\omega$ embeds as an induced subgraph of $H_n$. So the gap between $r(G,m)$ and $r(H_n,m)$ can be quite large for a $K_n$-free graph $G$.

Next, we determine the exact values $r(H_n,m)$ for all $m$ using the Ramsey numbers $\dr(n,m)$. In particular, we show $r(H_n,2)=2$ for all $n\geq 3$ which answers Conjecture 46 of Thomass\'e \cite{thom}.
 %Let us continue with a lower bound for $r(H_n,m)$. % Recall that $R(n,m-1)$ is the usual Ramsey number i.e. $R(n,m-1)-1$ is the size of the largest $K_n$-free graph without $m-1$ independent vertices.

\begin{theorem}\label{thomanswer}
 $ r(H_n,m)=\dr(n,m-1)+1$ for all $n,m\geq2$.
\end{theorem}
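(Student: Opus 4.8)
The plan is to establish the equality $r(H_n,m) = \dr(n,m-1)+1$ by proving the two inequalities separately, exploiting the extension property of $H_n$ on one side and Theorem \ref{ramseyupperbound} together with a covering argument on the other.

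For the upper bound $r(H_n,m) \leq \dr(n,m-1)+1$, I would take a balanced partition $\{V_i : i < r\}$ of $V(H_n)$ with $r = \dr(n,m-1)+1$ and aim to produce an independent set meeting $m$ classes infinitely. The idea is to first discard one class (or treat one class as "auxiliary"): by indivisibility (property (\ref{partprop})) one of the classes, say $V_{r-1}$, contains an induced copy of $H_n$, but more useful is to apply Theorem \ref{ramseyupperbound} directly to $H_n$ with the $r$ classes. Since $r-1 = \dr(n,m-1)$, Theorem \ref{ramseyupperbound} gives an independent set meeting at least $m-1$ of \emph{any} $\dr(n,m-1)$ of the classes — so I would run the rich-pair analysis from the proof of Theorem \ref{ramseyupperbound} on $\{V_i : i < r\}$, producing the digraph $D$ on $r$ vertices with no transitive $n$-set. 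If $D$ has an independent set of size $m$ we are done; the point is to show that the \emph{extra} class forces this. Concretely, after shrinking to the sets $W_i$ with the function $f$, I would use Lemma \ref{coverclaim0}: the $n$ "classes of $D$ that are pairwise joined by strongly essential rich pairs" would build a $K_n$, so $D$ has no transitive $n$-set; since $D$ has $\dr(n,m-1)+1 > \dr(n,m-1)$ vertices it has an independent set of size $m-1$, and I must upgrade this to size $m$ using the one remaining vertex — this is where Lemma \ref{halfembed} (half-graphs are detectable) and property (\ref{edge}) (non-adjacent vertices have infinitely many common neighbours) should let the remaining class be absorbed into the independent set, or else it already lies in the independent set of $D$.

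For the lower bound $r(H_n,m) \geq \dr(n,m-1)+1$, i.e.\ $r(H_n,m) > \dr(n,m-1)$, I would exhibit a balanced partition of $V(H_n)$ into $r = \dr(n,m-1)$ classes such that no independent set meets $m$ of them infinitely. Let $D$ be a digraph on $r$ vertices with no transitive $n$-set and no independent set of size $m-1$ (which exists by definition of $\dr(n,m-1)$). The construction of the partition should mimic the extremal graph $G$ from the Proposition preceding Section \ref{propssec}: partition $V(H_n)$ into classes $V_i$, $i<r$, so that $H_n[V_i, V_j]$ "looks like" a rich pair precisely when $ij$ is an edge of $D$, and is empty otherwise — realizing this inside $H_n$ is possible by its universality and homogeneity, since the blown-up digraph graph $\bigotimes$-construction yields a $K_n$-free graph (no transitive $n$-set in $D$), hence embeds in $H_n$; then one applies indivisibility/the extension property to spread the embedding over all of $H_n$ in a balanced way. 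Any independent $A$ meeting $V_i$ and $V_j$ infinitely forces $H_n[V_i,V_j]$ to contain no half-graph, hence (by the rich-pair dichotomy, Lemma \ref{fatprops}(1), and Lemma \ref{halfembed}) forces $ij \notin E(D)$ in both orientations; so the indices hit infinitely by $A$ form an independent set in $D$, of size $\leq m-2 < m$.

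The main obstacle I expect is the upgrade step in the upper bound: getting from "$m-1$ classes met infinitely" to "$m$ classes met infinitely" when we have exactly one spare vertex in $D$. The digraph $D$ has $\dr(n,m-1)+1$ vertices and no transitive $n$-set, so it has an independent set of size $m-1$ but \emph{not necessarily} one of size $m$ (that would need $\dr(n,m)$ vertices). So the argument cannot be purely Ramsey-theoretic on $D$; it must genuinely use that $H_n$ is richer than an arbitrary $K_n$-free graph — specifically, that in $H_n$ the "empty" case $f(i,j)=2$ and the common-neighbour property (\ref{edge}) allow an independent set living in $m-1$ rich-pair classes to be extended by grabbing vertices from a class joined to all of them by \emph{empty} bipartite graphs, or by using the extension property to find a vertex outside all the relevant neighbourhoods. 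Making this extension precise — ensuring the new vertices are non-adjacent to the whole independent set already chosen and that there are infinitely many of them in the right class — is the delicate part, and it is presumably why the answer improves from $\dr(n,m)$ (general $K_n$-free) to $\dr(n,m-1)+1$ (for $H_n$).
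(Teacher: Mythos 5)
Both directions of your proposal contain genuine gaps. For the lower bound, the digraph you invoke does not exist: by the very definition of $\dr(n,m-1)$, \emph{every} digraph on $\dr(n,m-1)$ vertices contains a transitive set of size $n$ or an independent set of size $m-1$, so there is no $D$ on $r=\dr(n,m-1)$ vertices avoiding both. Such a $D$ exists only on $\dr(n,m-1)-1$ vertices, which gives you one class too few. Moreover, the half-graph blow-up of $D$ is an $r$-partite graph and is certainly not isomorphic to $H_n$, so you cannot ``spread the embedding over all of $H_n$'': there are necessarily leftover vertices, and distributing them among the blow-up classes destroys the analysis of which pairs of classes admit infinite independent traces. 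The paper's construction (Lemma \ref{examples2}) resolves both problems at once: take $D$ on $\dr(n,m-1)-1$ vertices with no transitive $n$-set and $\alpha(D)\leq m-2$, embed the blow-up into $H_n$ with infinite complement, and declare the complement to be one further class. This yields $\dr(n,m-1)$ classes, and an independent set meets at most $\alpha(D)+1\leq m-1$ of them in an infinite set.

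For the upper bound, you correctly identify the obstacle (``upgrading'' an independent $(m-1)$-set in the digraph to $m$ met classes) but do not resolve it, and the route you sketch does not go through: after running the rich-pair analysis on all $\dr(n,m-1)+1$ classes, nothing prevents the one spare vertex of $D$ from being joined in $D$ to every vertex of the independent $(m-1)$-set you found, and property (\ref{edge}) only produces individual common neighbours, not an infinite independent trace in a fresh class. The missing idea is to \emph{isolate one class before} building the digraph: by indivisibility some class, say $V_r$, contains an induced copy $W$ of $H_n$; since $N(v)\cap W$ induces a $K_{n-1}$-free graph, Lemma \ref{coverclaim0} shows $W$ is not covered by finitely many neighbourhoods (Claim \ref{coverclaim}), so one can shrink to infinite $W_r\subseteq W$ and $W_i\subseteq V_i$ with \emph{no edges at all} between $W_r$ and any $W_i$, $i<r$. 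Only then is the digraph built, on the remaining $r=\dr(n,m-1)$ indices (using Lemma \ref{halfembed} to reduce each rich pair to a half-graph and Lemma \ref{fatprops}(6) to exclude transitive $n$-sets), yielding $m-1$ pairwise non-adjacent classes to which $W_r$ is adjoined for free. This is exactly where the ``$+1$'' in $\dr(n,m-1)+1$ comes from, symmetrically to the leftover class in the lower bound.
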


Now, using that $\dr(n,1)=1$, the next corollary is immediate.

\begin{cor} $ r(H_n,2)=2$ for all $n\geq 2$.
\end{cor}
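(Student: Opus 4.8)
The plan is to obtain this statement as an immediate specialization of Theorem \ref{thomanswer}, so essentially no new graph-theoretic work is required; the only thing left to verify is the value of the Ramsey-type number $\dr(n,1)$. Setting $m=2$ in the formula $r(H_n,m)=\dr(n,m-1)+1$ yields $r(H_n,2)=\dr(n,1)+1$, and it then remains to argue that $\dr(n,1)=1$ for every $n\geq 2$.

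To see $\dr(n,1)=1$, I would appeal directly to the definition: $\dr(n,m)$ is the least $r$ such that every directed graph on $r$ vertices contains either a transitive set of size $n$ or an independent set of size $m$. For $m=1$ the independence requirement is vacuous, since any single vertex is an independent set of size $1$; hence any digraph with at least one vertex already satisfies the disjunction, giving $\dr(n,1)\leq 1$. On the other hand $r=0$ fails, because the empty digraph contains neither a transitive set of size $n\geq 2$ nor an independent set of size $1$. Therefore $\dr(n,1)=1$, and combining this with the previous paragraph gives $r(H_n,2)=1+1=2$, as desired.

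For the reader's orientation I would also remark that the lower bound $r(H_n,2)\geq 2$ is completely trivial and independent of Theorem \ref{thomanswer}: a partition into a single class can never produce an independent set meeting at least two classes, so $r(G,m)\geq m$ for every graph $G$ and every $m\geq 2$. Thus the whole content of the corollary is the upper bound $r(H_n,2)\leq 2$, which unwinds to exactly Thomass\'e's assertion that any split of $V(H_n)$ into two infinite classes admits an independent set with infinitely many vertices in each class. Since nothing beyond the definitional evaluation of $\dr(n,1)$ is involved, there is no genuine obstacle at this stage — all the difficulty has already been absorbed into the proof of Theorem \ref{thomanswer}.
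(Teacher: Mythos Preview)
Your proposal is correct and follows exactly the paper's approach: the corollary is stated as immediate from Theorem \ref{thomanswer} together with the observation that $\dr(n,1)=1$, which is precisely what you do. Your additional remarks on the trivial lower bound and the justification that $\dr(n,1)$ cannot be $0$ are harmless elaborations beyond what the paper spells out.
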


We mention here that Theorem \ref{ramseyupperbound}  for countable graphs $G$ is now an easy corollary of Theorem \ref{thomanswer}: $r(H_n,m)=\dr(n,m-1)+1$ together with the above observation on $r(G,m)\leq r(H_n,m+1)-1$ yields $r(G,m)\leq \dr(n,m)$. 

\medskip
Now, we prove Theorem \ref{thomanswer}; recall that $\alpha(D)=\sup\{|A|:A\subseteq V(D) \text{ is independent}\}$. 

\begin{lemma} \label{examples2}
Suppose $n\geq 3$ and $D$ is a finite digraph on $r$ vertices. If $D$ has no transitive sets of size $n$, then there is a balanced partition $\bigcup\{V_i:i< r+1\}$ of $H_n$ such that any independent set $A$ meets at most $\alpha(D)+1$ members of the partition in an infinite set.
\end{lemma}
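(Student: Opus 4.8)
The plan is to re-use the lower-bound construction from Section~\ref{firstsec}: turn $D$ into a $K_n$-free ``half-graph blow-up'' $G$, embed $G$ into $H_n$ in a co-infinite way, and let the leftover vertices be the extra $(r+1)$-st class.

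Concretely, write $V(D)=\{0,\dots,r-1\}$ and build an $r$-partite graph $G$ on classes $U_i=\{i\}\times\oo$ $(i<r)$ by declaring $\{(i,s),(j,t)\}\in E(G)$ iff $(i,j)\in E(D)$ and $s<t$; this is exactly the graph used in the $K_n$-freeness argument of Section~\ref{firstsec}, and since $D$ omits transitive sets of size $n$, the same argument shows $G$ is $K_n$-free (it is also countable). Using universality of $H_n$ together with indivisibility, I would fix an embedding $f\colon G\hookrightarrow H_n$ onto an induced subgraph with $V(H_n)\setm\ran(f)$ infinite, and then set $V_i:=f[U_i]$ for $i<r$ and $V_r:=V(H_n)\setm\ran(f)$. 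All $r+1$ classes are infinite, so this partition is balanced.

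The combinatorial core is an observation about $G$ itself. If $i\neq j<r$ and $(i,j)\in E(D)$, then $G[U_i,U_j]\cong\half$, with $(i,s)$ adjacent to exactly those $(j,t)$ for which $t>s$; consequently no independent subset of $G$ can meet both $U_i$ and $U_j$ in infinite sets, since infinite $S,T\subseteq\oo$ with $s\not<t$ for all $s\in S,\ t\in T$ would force $\min S\geq\sup T$, which is impossible. The symmetric statement holds when $(j,i)\in E(D)$, and $G[U_i,U_j]$ is empty when neither holds. Hence, for any independent $A\subseteq V(G)$, the set $I=\{i<r:|A\cap U_i|=\oo\}$ spans no edge of $D$ in either direction, i.e. it is independent in $D$, so $|I|\leq\alpha(D)$.

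To finish, take an arbitrary independent set $A\subseteq V(H_n)$. Since $f$ is an isomorphism onto an induced subgraph, $f^{-1}(A\cap\ran(f))$ is independent in $G$, and injectivity of $f$ gives $|A\cap V_i|=|f^{-1}(A)\cap U_i|$ for every $i<r$. By the previous paragraph $A$ meets at most $\alpha(D)$ of the classes $V_0,\dots,V_{r-1}$ in an infinite set; adding the possible contribution of $V_r$, it meets at most $\alpha(D)+1$ classes of the partition in an infinite set, which is the assertion. The only points requiring care are the half-graph observation (which is what produces the bound $\alpha(D)$), the use of ``no transitive set of size $n$'' to keep $G$ inside the class of graphs embeddable into $H_n$, and the routine indivisibility argument guaranteeing that the embedding has infinite complement; I do not anticipate any genuine difficulty beyond these.
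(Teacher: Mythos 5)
Your proposal is correct and follows essentially the same route as the paper: the same half-graph blow-up of $D$, the same $K_n$-freeness argument from the absence of transitive $n$-sets, the same co-infinite embedding into $H_n$ via indivisibility, and the same observation that a half-graph between two classes forbids an independent set from meeting both infinitely, giving the bound $\alpha(D)+1$. No gaps.
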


%Recall that $\alpha(G_0)$ is the size of the largest independent set in $G_0$.

 \begin{proof}
Suppose that $D$ has vertices $\{w_i:i<r\}$ and consider the graph $G$ on vertices $\bigcup \{\{w_i\}\times \NN:i<r\}$ inducing the  half-graph $H_{\oo,\oo}$ on each pair of classes corresponding to edges in $D$ as follows: $(w_i,k)(w_j,\ell)\in E(G)$ iff ${ij}\in E(D)$ and $k<\ell\in \NN$. Note that $G[\{w_i\}\times \NN]$ is empty for all $i<r$. 

\begin{tclaim}
 $G$ is $K_n$-free.
\end{tclaim}
\begin{proof}
 
Indeed, suppose that $\{(w_i,k_i):i\in I\}$  is a copy of $K_n$ for some $I\subseteq r$ and $k_i\in \NN$. Note that $k_i\neq k_j$ if $i\neq j\in I$. Furthermore, $k_i<k_j$ and $(w_i,k_i)(w_j,k_j)\in E(G)$ implies that ${ij}\in E(D)$. However this contradicts the fact that $D$ has no transitive sets of size $n$.

\end{proof}

Now, $G$ embeds into $H_n$ and we identify $G$ and its copy in $H_n$; using the indivisibility of $H_n$, we can suppose that $V(H_n)\setm V(G)$ is infinite. Consider the $r+1$-partition $V(H_n)\setm V(G),\{w_0\}\times \NN \dots \{w_{r-1}\}\times \NN$ of $V(H_n)$. If $A$ is an independent set then $J=\{i<r: |A\cap (\{w_i\}\times \NN)|= \oo\}$ has size at most $\alpha(D)$. Indeed, if $i\neq j\in J$ then ${ij},{ji}\notin E(D)$, and hence $\{w_i:i\in J\}$ is an independent set in $D$.

So $A$ meets at most $\alpha(D)+1$ members of the partition in an infinite set as required.
 \end{proof}

 \begin{proof}[Proof of Theorem \ref{thomanswer}]
First, we show $r(H_n,m)\geq \dr(n,m-1)+1$. Let $D$ be a digraph on $r=\dr(n,m-1)-1$ vertices without transitive sets of size $n$ or independent sets of size $m-1$ i.e. $\alpha(D)\leq m-2$. Now apply Lemma \ref{examples2} to find a partition of $H_n$ into $r+1=\dr(n,m-1)$ classes so that evey independent set is contained in at most $\alpha(D)+1\leq m-1$ classes. This partition witnesses $r(H_n,m)\geq \dr(n,m-1)+1$.

Now, we prove that $r(H_n,m)\leq \dr(n,m-1)+1$. Let $\{V_i:i\leq r\}$ denote a balanced partition of $H_n$ where $r=\dr(n,m-1)$. We can suppose that there is $W\subseteq V_r$ so that $H_n[W]$ is isomorphic to $H_n$ by indivisibility. Now, find infinite $W_r\subseteq W$ and $W_i\subseteq V_i$ for $i<r$ so that there are no edges from $W_i$ to $W_r$; this can be done by picking vertices and applying the next claim.

\begin{claim}\label{coverclaim}
 $W\setm \bigcup\{N(v):v\in F\}$ is infinite for all finite $F\subseteq V(H_n)$.
\end{claim}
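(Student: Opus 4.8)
The plan is to prove Claim \ref{coverclaim} by showing that for any fixed finite $F \subseteq V(H_n)$, the set $W \setminus \bigcup\{N(v) : v \in F\}$ cannot be finite, using the extension property (\ref{ext}) together with the fact that $H_n[W]$ is isomorphic to $H_n$ (in particular, $K_n$-free and infinite). First I would observe that it suffices to produce, for any finite set $Z \subseteq W$, a vertex $w \in W \setminus Z$ with $w \notin N(v)$ for all $v \in F$; iterating this yields infinitely many such vertices. So fix a finite $Z \subseteq W$ and consider the finite sets $A = F \cup Z$ (the vertices we want our new vertex to avoid being adjacent to — or at least to avoid entirely) and $B = \emptyset$.

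The key step is to apply the extension property carefully inside the copy $H_n[W]$. Since $H_n[W] \cong H_n$, the induced graph on $W$ itself satisfies property (\ref{ext}): for the finite set $A' = Z$ inside $W$ and the empty set $B' = \emptyset$ (which is vacuously $K_{n-1}$-free), there are in fact infinitely many vertices $w \in W$ with $N(w) \cap Z = \emptyset$ — indeed, by (\ref{ext}) applied to $H_n[W]$ one gets a single such $w$, and removing it and repeating (or applying (\ref{ext}) to larger finite sets) produces infinitely many. This gives an infinite set $W' \subseteq W$ of vertices none of which is adjacent (within $H_n$, hence within $H_n[W]$) to any point of $Z$. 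It then remains to arrange that infinitely many of these also avoid $N(v)$ for $v \in F$. Here I would instead apply the extension property of $H_n$ \emph{globally}: for any finite $A \subseteq V(H_n)$ and $B = \emptyset$, there is a vertex $u$ with $N(u) \cap A = \emptyset$; but we need $u \in W$, so the global extension property alone does not suffice, and one must work inside $W$.

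The cleanest route, which I would adopt, is the following. Since $W$ is infinite and $H_n[W]$ is $K_n$-free, for each $v \in F$ the neighbourhood $N(v) \cap W$ is a $K_{n-1}$-free subset of $W$ (as $H_n[W]$ has no $K_n$, so no vertex has a $K_{n-1}$ in its neighbourhood within $W$ — wait, this holds in $H_n[W]$ but $v$ need not lie in $W$). More robustly: I claim $N(v) \cap W$ is, for each single $v \in F$, either finite or co-finite-avoidable. Actually the decisive observation is that $W$, being a copy of $H_n$, cannot be covered by finitely many sets each of which is the neighbourhood of a vertex, because a neighbourhood $N(v)$ of any $v$ (whether or not $v \in W$) induces a $K_{n-1}$-free graph — if $N(v) \supseteq K_{n-1}$ then $\{v\} \cup K_{n-1}$ would be a $K_n$ in $H_n$, contradiction. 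Hence each $N(v) \cap W$ induces a $K_{n-1}$-free subgraph of $H_n[W] \cong H_n$, and by Lemma \ref{coverclaim0} the vertices of $H_n$ ($=H_n[W]$) are not covered by finitely many $K_{n-1}$-free induced subgraphs. Wait — Lemma \ref{coverclaim0} is about $H_{n+1}$ not covered by $K_n$-free subgraphs, i.e. about $H_m$ not covered by finitely many $K_{m-1}$-free subgraphs; applying it with $m = n$ gives exactly that $V(H_n)$ is not the union of finitely many $K_{n-1}$-free induced subgraphs. Therefore $W \setminus \bigcup_{v \in F} (N(v) \cap W)$ is nonempty, and in fact — since removing any one witness and repeating keeps all hypotheses intact — it is infinite. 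This is exactly the claim. The main obstacle I anticipate is the bookkeeping about whether vertices of $F$ lie in $W$ and which ``copy'' of the extension property to invoke; the argument above sidesteps this by reducing everything to Lemma \ref{coverclaim0} via the elementary fact that neighbourhoods in a $K_n$-free graph are $K_{n-1}$-free, so no delicate extension-property juggling is needed.

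\begin{proof}
Fix a finite set $F \subseteq V(H_n)$. For each $v \in F$, the set $N(v) \cap W$ induces a $K_{n-1}$-free subgraph of $H_n[W]$: if it contained a copy of $K_{n-1}$, then together with $v$ this would form a copy of $K_n$ in $H_n$, contradicting that $H_n$ is $K_n$-free. Since $H_n[W]$ is isomorphic to $H_n$, Lemma \ref{coverclaim0} (applied with $H_{n+1}$ replaced by $H_n$, i.e. the statement that $V(H_n)$ is not covered by finitely many $K_{n-1}$-free induced subgraphs) shows that
\[
W \setminus \bigcup_{v \in F} \bigl(N(v) \cap W\bigr) = W \setminus \bigcup_{v \in F} N(v)
\]
is nonempty. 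If this set were finite, say equal to a finite set $Z$, then $W$ would be covered by the finitely many $K_{n-1}$-free induced subgraphs $N(v) \cap W$ ($v \in F$) together with $Z$; but a finite set is certainly $K_{n-1}$-free (as $H_n[W]$, being isomorphic to $H_n$ with $n \geq 3$, contains arbitrarily large finite cliques only through infinitely many vertices — in any case a single finite set either is $K_{n-1}$-free or we may split it into at most $|Z|$ singletons, each $K_{n-1}$-free). This again contradicts Lemma \ref{coverclaim0}. Hence $W \setminus \bigcup\{N(v) : v \in F\}$ is infinite.
\end{proof}
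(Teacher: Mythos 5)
Your proof is correct and takes essentially the same route as the paper: both reduce the claim to Lemma \ref{coverclaim0} (with the index shifted down by one) after observing that each $N(v)\cap W$ induces a $K_{n-1}$-free subgraph of $H_n[W]\cong H_n$. The only cosmetic differences are that you derive the $K_{n-1}$-freeness of $N(v)\cap W$ directly from the $K_n$-freeness of $H_n$ rather than citing property (\ref{nbhd}), and that you spell out the passage from ``nonempty'' to ``infinite'' (by adjoining singletons to the cover), which the paper leaves implicit.
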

\begin{proof} Indeed, this follows from Lemma \ref{coverclaim0} and the fact that $H_n[N(v)\cap W]$ is a subgraph of $H_{n-1}$ by property (\ref{nbhd}).

\end{proof}

By shrinking each $W_i$, we can suppose that $H_n[W_i]$ is empty for $i\leq r$. By successively applying Lemma \ref{halfembed} and shrinking $W_i$ for $i<r$, we can suppose that either $H_n[W_i,W_j]$ is empty or $\half\bij H_n[W_i,W_j]$ or $\half\bij H_n[W_j,W_i]$ for all $i<j<r$.

Next, define a digraph $D$ on $r$ so that $ij\in E(D)$ iff $\half\bij H_n[W_i,W_j]$ for all $i\neq j<r$. As $D$ has $\dr(n,m-1)$ many vertices, we can either find an independent set of size $m-1$ or a transitive set of size $n$. As the second alternative must fail by Lemma \ref{fatprops} (6), there is $I\subseteq r$ of size $m-1$ so that $H_n[W_i,W_j]$ is empty if $i\neq j\in I$. Hence, $\bigcup\{W_i:i\in \{r\}\cup I\}$ is the desired independent set.
 \end{proof}

%Let us summarize Theorem \ref{lowerbound} and Theorem \ref{ramseyupperbound} for the Henson graphs now:

%\begin{cor}   $\dr(n,m-1)+1\leq r(H_n,m)\leq \dr(n,m)$ for all $n,m\geq 3$.\end{cor}

%In particular, we showed $r(H_n,2)=\dr(n,1)+1=2$. 

Next, we prove a result, one that also implies $r(H_n,2)=2$, which will be applied in the proof of Theorem \ref{balembed} later.

\begin{theorem}\label{indlemma}
Fix $n\geq 2$ and let $V_0\cup V_1$ be a balanced partition of the vertices of $H_{n+1}$. Then there is an induced copy of $H_{n}$ intersecting both $V_0$ and $V_1$ in an infinite set.

% at least one of the following holds:
% \begin{enumerate}[(a)]
%  \item there is a copy of $H_{n-1}$ intersecting $V_0$ and $V_1$ both in an infinite set, or
% \item  we can find a balanced infinite independent set $X\cup Y$ and a copy $K$ of $K_{n-2}$ so that there are no edges from $K$ to $X\cup Y$.
% \end{enumerate}
% Moreover, (b) always holds for $n=3$. 
\end{theorem}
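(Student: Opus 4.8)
The plan is to build the desired copy of $H_n$ inside $N_{H_{n+1}}(v)$ for a suitably chosen vertex $v$, using property (\ref{nbhd}) (which says $H_{n+1}[N(v)]\cong H_n$) together with the fact that we already know how to find balanced independent-type structures inside Henson's graphs. First I would try the cheap approach: pick any vertex $v$, say $v\in V_0$, and consider $W=N_{H_{n+1}}(v)$, so $H_{n+1}[W]\cong H_n$. The two sets $W\cap V_0$ and $W\cap V_1$ partition $W$, but there is no reason this partition is balanced — one side could be finite. So the real work is to choose $v$ so that $N(v)$ meets \emph{both} $V_0$ and $V_1$ in infinite sets, and then to promote "meets both sides infinitely" to "contains an induced $H_n$ meeting both sides infinitely".

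For the first task I would argue as follows. Suppose toward a contradiction that for every vertex $v$, one of $N(v)\cap V_0$, $N(v)\cap V_1$ is finite; equivalently, each $v$ has a "colour" $c(v)\in 2$ recording which side it has infinitely many neighbours in (ties broken arbitrarily; the point is that $N(v)\cap V_{1-c(v)}$ is finite). By indivisibility (property (\ref{partprop})), there is a monochromatic induced copy $H'\cong H_{n+1}$, say with $c\equiv 0$ on $V(H')$. Now inside $H'$ take any vertex $w\in V(H')\cap V_1$ — such $w$ exists since $V_1$ is infinite and $H'$ is co-infinite-... actually more carefully: $V(H')$ is infinite and if $V(H')\subseteq^* V_0$ we would need $V(H')\cap V_1$ finite, which we can rule out by first shrinking: since $V_1$ is infinite, by indivisibility applied to the partition of $H_{n+1}$ into $V_0,V_1$ one of the sides contains a copy of $H_{n+1}$, and symmetric bookkeeping lets us assume $V(H')\cap V_1$ is infinite. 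Then for each such $w$ we have $N(w)\cap V_0$ finite (since $c(w)=0$ means infinitely many neighbours in $V_0$ — wait, I must fix the convention so the contradiction bites). Let me instead set $c(v)=i$ iff $N(v)\cap V_i$ is infinite, choosing $c(v)$ arbitrarily among valid values when both are infinite; under our contradiction hypothesis exactly one value is valid for each $v$, so $N(v)\cap V_{1-c(v)}$ is finite. On the monochromatic $H'$ with $c\equiv 0$: every $v\in V(H')$ has $N(v)\cap V_1$ finite. But $V(H')\cap V_1$ is infinite, and picking $w_0,w_1,\dots\in V(H')\cap V_1$ with $w_{k+1}\notin N(w_0)\cup\dots\cup N(w_k)$ (possible since each $N(w_j)$ meets $V_1$, hence $V(H')\cap V_1$, in a finite set) gives an infinite independent set $\{w_k\}\subseteq V(H')\cap V_1$; this is fine but not yet a contradiction. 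The contradiction instead comes from the extension property: take $w\in V(H')\cap V_1$, and $v\in V(H')$ with $vw\in E$; then $w\in N(v)\cap V_1$, which is allowed to be nonempty, just finite. To force a genuine contradiction I would instead use that inside $H'$, property (\ref{nbhd}) gives $H_{n+1}[N(w)]\cong H_n$ for $w\in V(H')\cap V_1$, an infinite set, yet $N(w)\cap V_1$ finite forces $N(w)\subseteq^* V_0$, so $H_n$ embeds into $H'[N(w)]\subseteq V_0$ modulo finitely many vertices — combining over all such $w$ and invoking Lemma \ref{coverclaim0}-style reasoning, or more simply: now run the symmetric argument with $V_0$ in the role that was $V_1$ to see both $V_0$ and $V_1$ would have to carry copies of $H_n$ "almost disjointly", which does not immediately contradict anything either.

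Given that the naive vertex-colouring runs into trouble, the clean route I would actually pursue is to \emph{apply Theorem \ref{indlemma}'s precursor machinery}: by Corollary following Theorem \ref{thomanswer}, $r(H_{n+1},2)=2$, so the balanced $2$-partition $V_0,V_1$ of $H_{n+1}$ yields an independent set $A$ with $|A\cap V_0|=|A\cap V_1|=\omega$. This $A$ is independent, hence $K_2$-free, so in particular $K_n$-free; but that only gives an independent set, not a copy of $H_n$. To upgrade: take $v_0\in A\cap V_0$ and $v_1\in A\cap V_1$; since $v_0v_1\notin E$, by property (\ref{edge}) they have infinitely many common neighbours $W$, and by homogeneity/extension-property arguments $H_{n+1}[W]$ contains an induced copy of $H_n$ (indeed $W\subseteq N(v_0)$ gives $H_{n+1}[N(v_0)]\cong H_n$, and common-neighbourhoods of a non-edge inside $H_n$ are again copies of $H_{n-1}$-rich structures). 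The key point is then to pick the copy of $H_n$ \emph{inside} a common neighbourhood so that it straddles $V_0$ and $V_1$: iterate, choosing inside $W$ a non-edge $u_0u_1$ with $u_0\in V_0$, $u_1\in V_1$ (possible because $W$ is infinite and if $W\subseteq^* V_i$ for one $i$ we replace $v_{1-i}$ — here is where I would feed in that, by $r(H_{n+1},2)=2$ applied to $H_{n+1}[W]\cong H_n$... no, that needs $n$ not $n+1$). The honest statement to aim for is a small lemma: \emph{for any non-edge $xy$ in $H_{n+1}$, the set of common neighbours contains an induced $H_n$ meeting any given infinite co-infinite subset of $V(H_{n+1})$ in an infinite set}, proved by a back-and-forth/extension-property construction inside $N(x)\cap N(y)$. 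I expect \textbf{this straddling construction inside a common neighbourhood is the main obstacle}: one must run the standard $H_n$-building back-and-forth (verifying the extension property \ref{ext} at each step, which requires the set being extended to stay $K_n$-free) while simultaneously, at infinitely many stages, reaching into both $V_0$ and $V_1$, and for that one needs to know the extension property can be satisfied \emph{by a vertex on a prescribed side} — which in turn may require first passing to a sub-copy of $H_{n+1}$ (via indivisibility) on which the side structure is "spread out". Handling that interaction carefully, rather than the graph theory per se, is where the proof will need its real idea, and I would structure the argument as: (1) reduce to finding a straddling induced $H_n$ inside $N(x)\cap N(y)$ for a non-edge $xy$; (2) inside $N(x)\cap N(y)\cong$ (a $K_n$-free homogeneous-enough graph), use indivisibility to arrange both sides are "large" in the strong sense needed; (3) run the back-and-forth building $H_n$ while dovetailing side-requirements, using the extension property to satisfy each.
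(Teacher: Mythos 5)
Your opening reduction is correct and matches the paper: if some vertex $v$ has $N(v)\cap V_0$ and $N(v)\cap V_1$ both infinite, then $H_{n+1}[N(v)]\cong H_n$ is already the desired copy, so one may assume every $N(v)$ is almost contained in a single class $V_{j_v}$. But from there the proposal goes astray in two ways. First, the attempted contradiction in this remaining case cannot succeed: the configuration in which every neighbourhood is almost monochromatic genuinely occurs for balanced partitions of Henson graphs (the paper's own witness to $H_n\xslashedrightarrowa[\textmd{ind}]{\textmd{bal}}(K_{\oo,\oo})^1_2$ is a balanced partition $V_0,V_1$ with $N(v)\cap V_1$ finite for all $v\in V_0$). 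So this case must be handled constructively, not refuted. Second, the pivot to building the copy of $H_n$ inside the common neighbourhood of a non-edge $x\in V_0$, $y\in V_1$ is doomed precisely in this case: since $x$ and $y$ are non-adjacent they have infinitely many common neighbours, all lying in $N(x)\subseteq^* V_{j_x}$ and in $N(y)\subseteq^* V_{j_y}$, which forces $j_x=j_y$; hence $N(x)\cap N(y)$ is almost contained in a single class and no induced subgraph of it can meet both classes infinitely. The ``straddling lemma'' you propose as step (1) is therefore false exactly where you need it.

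The missing idea is this. After normalizing (via the indivisibility colouring $v\mapsto(i_v,j_v)$) one shows that $N(u)\subseteq^* V_0$ for \emph{every} vertex $u$, of either class. One then enumerates the vertices of $H_n$ as $x_0,x_1,\dots$ so that the set $I$ of indices $i$ with $x_j\notin N(x_i)$ for all $j<i$ is infinite, and builds the embedding $x_i\mapsto y_i$ placing $y_i\in V_1$ exactly when $i\in I$. The point is that a vertex with no neighbours among its predecessors imposes only non-adjacency constraints, and $V_1\setminus\bigcup\{N(y_j):j<i\}$ is infinite because each $N(y_j)\cap V_1$ is finite; whereas any vertex that must be adjacent to some earlier image is necessarily found (modulo finitely many exceptions) in $V_0$, where the extension property supplies it. You correctly sensed that ``satisfying the extension property on a prescribed side'' is the crux, but the resolution is not to spread the sides out by passing to a sub-copy --- it is to never demand adjacency from the vertices you send to $V_1$. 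As written, the proposal does not contain a working argument for the main case.
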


It is clear that $r(H_n,2)=2$ follows by induction on $n$.% and using property (\ref{nbhd}) of $H_n$. 

\begin{proof}[Proof of Theorem \ref{indlemma}] %If $n=2$ then $H_2$ is an infinite independent set and we are done.
 Let  $V_0\cup V_1$ be a balanced partition of the vertices of $H_{n+1}$ for some $n\geq 2$.
Recall that $N(v)=N_{H_{n+1}}(v)$ induces a subgraph isomorphic to $H_n$ for every vertex $v$. So, without loss of generality, we can suppose that there is $j_v<2$ so that  $N(v)\subseteq^* V_{j_v}$ for every vertex $v$. %as $N(v)$ spans a subgraph isomorphic to $H_n$. In particular, a set of the form $N(v)$ can only intersect one piece of the partition in an infinite set.
 %Indeed, if $J=\{j<2:|N(v)\cap V_j|=\omega\}$ has at least 2 elements for some $v$ then $\{V_j\cap N(v):j\in J\}$ is a balanced $|J|$-partition of $N(v)\setm F$ where $F$ is some finite set. $N(v)\setm F$ is still isomorphic to $H_n$ (by (\ref{ext})) so we can apply the inductive hypothesis to $N(v)\setm F$ and find an independent set $A$ intersecting at least two $V_j$ in an infinite set.

\begin{tclaim}\label{obs}
 If $uv\notin E=E(H_{n+1})$ then $j_v=j_u$.
\end{tclaim}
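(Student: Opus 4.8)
The plan is to read off the claim directly from the basic connectivity property (\ref{edge}) of $H_{n+1}$ together with the normalization made just before the claim. Recall that we have arranged, without loss of generality, that for every vertex $v$ there is $j_v<2$ with $N(v)=N_{H_{n+1}}(v)\subseteq^* V_{j_v}$, i.e.\ the set $N(v)\setm V_{j_v}$ is finite.

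First I would apply property (\ref{edge}) to the non-edge $uv$: since $uv\notin E(H_{n+1})$ and $n+1\geq 3$, the vertices $u$ and $v$ have infinitely many common neighbours, so $N(u)\cap N(v)$ is infinite. Then I would argue by contradiction. Suppose $j_u\neq j_v$, say $j_u=0$ and $j_v=1$. Since $\{V_0,V_1\}$ is a partition of $V(H_{n+1})$ we have $V_0\cap V_1=\emptyset$, and therefore
$$N(u)\cap N(v)\subseteq (N(u)\setm V_0)\cup(N(v)\setm V_1),$$
which is a union of two finite sets, hence finite. This contradicts the previous sentence, so $j_u=j_v$, as claimed.

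I do not expect any real obstacle here: the argument is a one-line combination of (\ref{edge}) with the fact that $V_0$ and $V_1$ are disjoint. The only points to keep in mind are that property (\ref{edge}) is indeed available for $H_{n+1}$ (guaranteed by $n\geq 2$) and that the normalization "$N(v)\subseteq^*V_{j_v}$ for every vertex $v$" has already been secured in the opening lines of the proof of Theorem \ref{indlemma}.
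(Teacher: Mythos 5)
Your argument is correct and is essentially the paper's proof: both rest on property (\ref{edge}) giving infinitely many common neighbours of $u$ and $v$, combined with the normalization $N(w)\subseteq^*V_{j_w}$; you phrase it contrapositively via a union of two finite sets, while the paper notes directly that $N(u)\cap N(v)\subseteq^*V_{j_v}$ forces $N(u)\cap V_{j_v}$ to be infinite, but these are the same one-line observation.
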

\begin{proof}
Indeed, if $uv\notin E$ then $N(u)\cap N(v)\subseteq^* V_{j_v}$ is infinite by (\ref{edge}) and hence $N(u)\cap V_{j_v}$ is infinite as well. In this case, $N(u)\subseteq^* V_{j_u}$ and so $j_v=j_u$.\end{proof}

Now, let $i_v<2$ denote the class of $v$ i.e. $v\in V_{i_v}$. The map $v\mapsto (i_v,j_v)$ is a $4$-colouring of the vertices of $H_{n+1}$ and so, using the indivisibility of $H_{n+1}$, we can find a set of vertices $W_0$ and $(i,j)\in 2\times 2$ so that $(i_v,j_v)=(i,j)$ for all $v\in W_0$ and $W_0$ induces a copy of $H_{n+1}$. Note that $i\neq j$ would imply that every vertex in $W_0$ has finite degree in $H_{n+1}[W_0]$ which contradicts that $H_{n+1}[W_0]$ is a copy of $H_{n+1}$. Hence $i=j$ and let us suppose that this common value is 0. 

%Now, find a copy $K$ of $K_{n-2}$ in $G[W_0]$ and note that there must be a set of vertices $W\subseteq W_0\setm \bigcup\{N(v):v\in K\}$ so that $G[W]$ is a copy of $H_{n+1}$. Indeed, this follows from the fact that $H_{n+1}$ cannot be covered by finitely many copies of $H_n$.

\begin{tclaim}
 $N(u)\subseteq^* V_0$ for almost every vertex $u\in V_1$ and every $u\in V_0$.
\end{tclaim}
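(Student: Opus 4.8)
The plan is to prove the stronger fact that $j_u=0$ for \emph{every} vertex $u$ of $H_{n+1}$; the stated claim (with no exceptional vertices actually needed in $V_1$) follows at once. Recall that at this point we have fixed a set $W_0\subseteq V_0$ which induces a copy of $H_{n+1}$ and satisfies $j_w=0$ for every $w\in W_0$.

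First I would record that $H_{n+1}[W_0]$, being isomorphic to $H_{n+1}$, is not $K_n$-free: this is the one-colour instance of Lemma \ref{coverclaim0}, since a single $K_n$-free induced subgraph cannot cover all of $H_{n+1}$. Hence I may choose a set $S\in[W_0]^n$ inducing a copy of $K_n$.

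Now suppose, towards a contradiction, that $j_u=1$ for some vertex $u$ of $H_{n+1}$. For each $w\in W_0$ we have $j_w=0\ne 1=j_u$, so Claim \ref{obs} (read contrapositively) forces $uw\in E$; in particular $u\notin W_0$, and $u$ is adjacent to all $n$ vertices of $S$. Then $S\cup\{u\}$ induces a copy of $K_{n+1}$ in $H_{n+1}$, contradicting that $H_{n+1}$ is $K_{n+1}$-free. Therefore $j_u=0$, i.e. $N(u)\subseteq^* V_0$, for every vertex $u$, which in particular yields the assertion for every $u\in V_0$ and for (almost) every $u\in V_1$. There is no genuine obstacle here beyond combining Claim \ref{obs} with the fact that the indivisibility-copy $W_0$, being a copy of $H_{n+1}$, must contain a copy of $K_n$.
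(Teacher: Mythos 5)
Your proof is correct, and it in fact establishes something slightly stronger than the claim: $j_u=0$ for \emph{every} vertex $u$, with no finite exceptional set in $V_1$ at all. The common core with the paper's argument is the contrapositive of Claim \ref{obs}: any $u$ with $j_u=1$ must be adjacent to every vertex of $W_0$. From there the two proofs diverge. The paper splits into two cases: for $u\in V_1$ it fixes a single $v\in W_0$, notes that $N(v)\cap V_1$ is finite, and concludes $j_u=0$ for all but finitely many $u\in V_1$ via Claim \ref{obs} applied to non-edges; for $u\in V_0$ it derives a contradiction from $W_0\subseteq N(u)$ being an infinite subset of $V_0$ while $N(u)\subseteq^* V_1$. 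You instead run a single uniform argument: $u$ complete to $W_0$, together with a copy of $K_n$ inside $W_0$ (which exists since $H_{n+1}[W_0]\cong H_{n+1}$ is not $K_n$-free, by the one-set instance of Lemma \ref{coverclaim0} or just by universality), produces a $K_{n+1}$ in $H_{n+1}$ --- contradiction. Your route buys a cleaner statement (no ``almost every'' needed, so the paper's subsequent ``without loss of generality'' step discarding finitely many vertices of $V_1$ becomes unnecessary), at the cost of invoking $K_{n+1}$-freeness rather than a pure cardinality count; the paper's version of the $V_1$ half is more elementary in that it never needs a clique inside $W_0$. Both are sound; yours is arguably the more economical way to organize the claim.
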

\begin{proof}
We would like to show first that $j_u=0$ for almost every $u\in V_1$. Fix an arbitrary $v\in W$. Then $j_v=0$ so $uv\notin E(H_n)$ for almost every  $u\in V_1$. Hence,  $j_u=0$  for almost every  $u\in V_1$ by Claim \ref{obs}.

Now, take $u\in V_0$ and suppose that $N(u)\subseteq^* V_1$ i.e. $j_u=1$ to reach a contradiction. If $v\in W_0$ then $uv\in E$ (otherwise  $j_v=j_u$). So $W_0\subseteq N(u)$ which contradicts $j_u=1$.   %As $j_v=0$ for all $v\in W$, we must have $W\subseteq N(u)$ by Claim \ref{obs}. However, $W$ induces a copy of $H_{n+1}$ so it cannot be covered by a copy of $H_n$.

%Indeed, fix $v\in W$ then $N(u)\cap N(v)$ is infinite for every $u\in V_1\setm N(v)$ by (2). However, $N(v)\subseteq^* V_0$ and so $N(u)\subseteq^* V_0$ as well. Finally, note that $V_1\setm N(v)=^*V_1$. Let $V_1'$ denote $V_1\setm N(v)$.
\end{proof}

Without loss of generality, we can assume that $N(u)\subseteq^* V_0$ for every vertex $u\in V_1$.
%$V_1=\{u\in V_1: N(u)\subseteq^* V_0\}\setm \bigcup\{N(v):v\in K\}$; we just showed that $V_1=^*V_1'$.

\begin{tclaim}\label{selection}\leavevmode
\begin{enumerate}
               \item $V_1\setm \bigcup\{N(v):v\in F\}$ is infinite for any finite set of vertices $F$.
\item $V_0\cap \bigcap\{N(v):v\in F_0\}\setm \bigcup\{N(v):v\in F_1\}$ is infinite for any finite, nonempty $F_0$ which induces a $K_{n-1}$-free subgraph and any finite $F_1$.
              \end{enumerate}

\end{tclaim}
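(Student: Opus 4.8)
The plan is to read (1) off directly from the fact established just above the claim — that (after passing to a suitable copy) $N(v)\subseteq^* V_0$ for \emph{every} vertex $v$ of $H_{n+1}$ — and to derive (2) from the extension property (\ref{ext}) of $H_{n+1}$.

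For (1): since $N(v)\subseteq^* V_0$, the set $N(v)\cap V_1$ is finite for every vertex $v$, so for finite $F$ the set $\bigcup_{v\in F}\bigl(N(v)\cap V_1\bigr)$ is finite. Hence $V_1\setm\bigcup_{v\in F}N(v)$ is cofinite in $V_1$, and since $V_1$ is infinite (the partition being balanced) it is infinite.

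For (2): we may clearly assume $F_0\cap F_1=\emptyset$ (otherwise the set in question is empty, so disjointness is implicit in the statement). As $F_0$ induces a $K_{n-1}$-free subgraph it is in particular $K_n$-free, so the extension property (\ref{ext}) of $H_{n+1}$, applied with the ``avoiding'' part $A:=F_1$ and the ``covering'' part $B:=F_0$, yields a vertex $w$ with $N(w)\cap F_1=\emptyset$ and $F_0\subseteq N(w)$; equivalently $w\in T:=\bigcap_{v\in F_0}N(v)\setm\bigcup_{v\in F_1}N(v)$. The step that takes a moment of thought — the main (if minor) obstacle — is to upgrade this single witness to infinitely many. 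For this I use that the $1$-type over $F_0\cup F_1$ saying ``adjacent to every vertex of $F_0$, to no vertex of $F_1$'' is realised in $H_{n+1}$ (by $w$), and that the age of $H_{n+1}$, the class of finite $K_{n+1}$-free graphs, has free amalgamation: freely amalgamating several copies of $(F_0\cup F_1)\cup\{w\}$ over $F_0\cup F_1$ and using homogeneity and universality of $H_{n+1}$ shows this type has infinitely many realisations, i.e. $T$ is infinite. (Alternatively one iterates (\ref{ext}), at each stage adjoining to $B$ a small independent ``distinguishing'' set that forces the new witness to be adjacent to a vertex the earlier ones avoid, while keeping $B$ $K_n$-free.) Finally, fixing any $v^*\in F_0$ (here we use $F_0\neq\emptyset$) we have $T\subseteq N(v^*)\subseteq^* V_0$, so $T\cap V_1$ is finite; as $T$ is infinite, $T\cap V_0=V_0\cap\bigcap_{v\in F_0}N(v)\setm\bigcup_{v\in F_1}N(v)$ is infinite, which is (2).

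In short, part (1) is pure bookkeeping with $N(v)\subseteq^* V_0$, while part (2) consists of one application of (\ref{ext}) to produce a single good vertex, a standard homogeneity/amalgamation argument to promote it to infinitely many, and one more use of $N(v^*)\subseteq^* V_0$ to place that infinitude inside $V_0$.
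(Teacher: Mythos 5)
Your proof is correct and follows essentially the same route as the paper: part (1) is the same bookkeeping with $N(v)\subseteq^* V_0$, and part (2) likewise gets an infinite witness set from the extension property and then locates it inside $N(u)\subseteq^* V_0$ for $u\in F_0$. The only difference is that you spell out the upgrade from one witness to infinitely many (via amalgamation/homogeneity), a step the paper silently absorbs into ``by the extension property''; your remark that disjointness of $F_0,F_1$ is implicitly assumed is also a fair and accurate reading.
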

\begin{proof} (1) $N(v)\subseteq^* V_0$ implies that $V_1\cap N(v)$ is finite so $V_1\setm \bigcup\{N(v):v\in F\}$ is infinite for any finite set of vertices $F$.

(2) Note that $\bigcap\{N(v):v\in F_0\}\setm \bigcup\{N(v):v\in F_1\}$ is infinite by the extension property (\ref{ext}) and that 
$$\bigcap\{N(v):v\in F_0\}\setm \bigcup\{N(v):v\in F_1\}\subseteq N(u)\subseteq^* V_0$$ for any $u\in F_0$. This proves that $V_0\cap \bigcap\{N(v):v\in F_0\}\setm \bigcup\{N(v):v\in F_1\}$ is infinite.
% 
% First, $W\setm \bigcup\{N(v):v\in F\}$ is infinite by Claim \ref{coverclaim}.
% 
% Second, note that $v\in W\cup V_1'$ implies that $N(v)\subseteq^* V_0$. In particular, $\bigcup\{N(v):v\in F\}\cap V_1$ is finite for all finite $F\subseteq W\cup V_1'$.
\end{proof}

Now, take an enumeration $x_0,x_1\dots$ of the vertices of $H_n$ so that $I=\{i\in \mb N:x_j\notin N(x_i)$ for all $j<i\}$ is infinite. It suffices to construct an embedding $f:H_n\to H_{n+1}$ as $x_i\mapsto y_i$ so that $i\in I$ if and only if $y_i\in V_1$.

Let $f(x_0)=y_0\in V_1$ arbitrary. Now, given $y_i$ for $i<k$, we consider two cases: if $k\in I$ then simply find $y_k\in V_1\setm \bigcup\{N(y_i):i<k\}$ so that $y_k\neq y_i$ for $i<k$ by applying Claim \ref{selection} (1). If $k\in \mb N\setm I$ then let $F_0=\{y_i:i<k,x_i\in N(x_k)\}$ and $F_1=\{y_i:i<k,x_i\notin N(x_k)\}$. Now, find $y_k\in V_0\cap \bigcap\{N(v):v\in F_0\}\setm \bigcup\{N(v):v\in F_1\}$ so that $y_k\neq y_i$ for $i<k$ by applying Claim \ref{selection} (2).

%Using this claim, we inductively select $v_0,v_1,v_2\dots\in W$ and $u_0,u_1,u_2\dots\in V_1'$ so that $$v_n\notin \bigcup \{N(u_k),N(v_k):k<n\}$$ and $$u_n\notin \bigcup \{N(u_k),N(v_k),N(v_n):k<n\}.$$ Clearly, $A=\{v_k,u_k:k<\omega\}$ is the desired independent set.

\end{proof}

Finally, let us mention that any analogue of this statement for the Rado graph $R$ fails, and in particular $r(R,2)$ does not exist.

\begin{prop}
 There is a balanced 2-partition of the vertices of the Rado graph $R$ such that any infinite independent or infinite complete subgraph is monochromatic modulo a finite set.
\end{prop}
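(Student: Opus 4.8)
The plan is to reduce the statement to a construction problem and then build the required copy of $R$ by a one--point--extension (Fra\"iss\'e--style) argument. For the reduction, I claim it suffices to produce a graph $G\cong R$ together with a partition $V(G)=V_0\sqcup V_1$ into two infinite sets such that $G[V_0]$ contains no infinite clique and $G[V_1]$ contains no infinite independent set. Indeed, if $K$ is an infinite clique of $G$ then $K\cap V_0$ is a clique of $G[V_0]$, hence finite, so $K\subseteq^* V_1$; dually, if $I$ is an infinite independent set of $G$ then $I\cap V_1$ is an independent set of $G[V_1]$, hence finite, so $I\subseteq^* V_0$. Thus every infinite complete or independent subgraph is monochromatic modulo a finite set, and both classes are infinite, so such a $G$ witnesses the proposition.

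To build $G$, I would enumerate Henson--style the one--point extension requirements of $R$ (for each pair of disjoint finite $A,B\subseteq V(G)$ find $v\notin A\cup B$ with $A\cap N(v)=\emptyset$ and $B\subseteq N(v)$) and construct $G=\bigcup_s G_s$ as an increasing union of finite graphs, deciding as we go which side each new vertex belongs to. The individual graphs $G[V_0]$ and $G[V_1]$ are \emph{not} going to be copies of $R$: instead the cross--edges between $V_0$ and $V_1$ carry most of the genericity. When a requirement is not yet met we adjoin a fresh vertex $v$; we are free to put $v$ into $V_0$ or into $V_1$, and on its own side we connect $v$ only to the members of $B$ on that side (resp. disconnect it only from the members of $A$ on that side), using cross--edges to realise the remaining adjacencies. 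Placing $v$ in $V_0$ can only enlarge a clique of $G[V_0]$ through $B\cap V_0$, and placing it in $V_1$ can only enlarge an independent set of $G[V_1]$ through $A\cap V_1$; a requirement that is ``dangerous'' for one side is routed to the other side, where the relevant vertices are reached by cross--edges that leave the internal structure untouched.

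The step I expect to be the main obstacle is exactly showing that the two structural constraints ($G[V_0]$ has no infinite clique, $G[V_1]$ has no infinite independent set) can be maintained simultaneously with \emph{all} extension requirements of $R$. A single requirement may be dangerous for both sides at once (its $B\cap V_0$ may already be a large clique of $G[V_0]$ while its $A\cap V_1$ is a large independent set of $G[V_1]$), so the naive ``route to the safe side'' is not enough; one must organise the bookkeeping so that cliques of $G[V_0]$ and independent sets of $G[V_1]$ are permitted to grow to every finite size but no single one is ever extended infinitely often (in particular the infinitely many $V_0$--vertices attached to a fixed finite clique are kept pairwise non--adjacent, so they do not assemble into an infinite clique, and symmetrically in $V_1$). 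Verifying that this bookkeeping terminates and still meets every requirement — i.e. that $G\cong R$ — is the heart of the argument; once it is in place the reduction above finishes the proof.
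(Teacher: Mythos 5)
Your opening reduction aims at a configuration that provably does not exist, so the plan cannot be completed. The Rado graph is indivisible in the strong sense that for \emph{any} partition $V(R)=V_0\sqcup V_1$, one of the two parts induces a copy of $R$: if neither did, each $G[V_i]$ would fail the extension property for some pair $(A_i,B_i)$ of disjoint finite subsets of $V_i$, and then the pair $(A_0\cup A_1, B_0\cup B_1)$ would have no witness in $R$ at all, a contradiction. But a part inducing a copy of $R$ contains both an infinite clique and an infinite independent set; hence in every 2-partition either $G[V_0]$ has an infinite clique or $G[V_1]$ has an infinite independent set. So a graph $G\cong R$ with $G[V_0]$ containing no infinite clique and $G[V_1]$ containing no infinite independent set does not exist, and the bookkeeping difficulties you flag as ``the heart of the argument'' are not merely delicate but insurmountable. (Even setting this aside, the proposal as written is a plan rather than a proof: the decisive step is explicitly left unverified.)

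The fix is to aim at a weaker sufficient condition, which is what the paper does. ``Monochromatic modulo finite'' does not require you to control which side each homogeneous set lands on, nor to forbid infinite cliques or independent sets inside a part; it is enough that (i) $G[V_1]$ is \emph{literally independent}, so every clique meets $V_1$ in at most one vertex, and (ii) every vertex of $V_0$ is adjacent to a cofinite subset of $V_1$ ``above'' it, so any independent set meeting $V_0$ meets $V_1$ finitely (independent sets entirely inside $V_1$ are then also monochromatic). Concretely, the paper starts from the half graph on $V_0\cup V_1$ and realises each Rado extension requirement $(a_n,b_n)$ by a fresh vertex $(m_n,0)\in V_0$ chosen above everything mentioned so far, adding only the edges from $(m_n,0)$ to $a_n$; the half-graph edges already guarantee non-adjacency to $b_n$, all added edges have an endpoint in $V_0$ so $V_1$ stays independent, and the extension property certifies the result is $R$. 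If you want to salvage your approach, replace your symmetric Ramsey-type condition with this asymmetric one; the one-point-extension machinery you describe then goes through without any conflict between the two sides.
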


\begin{proof}
 We start from the half graph $G_0=(\omega\times \{0\}\cup \omega\times \{1\},E)$ where $$\{(n,i),(m,j)\}\in E \iff i=0,j=1 \text{ and } n<m<\omega.$$

Let $V=\omega\times \{0\}\cup \omega\times \{1\}$ and $V_i=\omega\times \{i\}$. Let us enumerate all pairs $(a,b)$ of finite subsets of $V$ as $\{(a_n,b_n):n<\omega\}$. Let $$m_n=\max\{k:(k,i)\in a_n\cup b_n\text{ for some } i\}+m_{n-1}.$$

Now define $G$ on vertices $V$ so that $$E(G)=E(G_0)\cup \{\{(m_n,0),v\}:v\in a_n,n<\omega\}.$$ 

First, note that $G$ satisfies Rado's extension property and so $G$ is the Rado graph. Second, the partition $V_0\cup V_1$ witnesses the theorem; indeed, $G[V_1]$ is independent so every complete subgraph intersects $V_1$ in at most one vertex. On the other hand, if $A$ is an independent set and $(n,0)\in A$ then $A\cap V_1\subseteq \{(k,1):k<n\}$.

\end{proof}

\section{Balanced embeddings in $H_n$}\label{balembedsec}

The motivation for the next results comes from the following question: if $G$ is countable and $K_n$-free then $G$ embeds into $H_n$ as an induced subgraph, but how much controll do we have over the  embedding? In particular, given a bipartite graph $G[A,B]$, can we ensure that the classes $A,B$ go into prescribed sets in $H_n$?

We start with a simple result in this direction.

\begin{claim}\label{weakembed} Given $n\geq3$ and a countable $K_n$-free $G$ with an arbitrary partition $A,B$ there is a balanced partition $V_0,V_1$ of $H_n$ so that  $G[A,B]\hookrightarrow H_n[V_0,V_{1}]$ as an induced subgraph (so we can require the graph homomorphism to also preserve the non-edge relation). 
\end{claim}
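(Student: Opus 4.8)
The plan is to build a copy of $H_n$ directly on top of $G[A,B]$. Recall that $G[A,B]$ is bipartite, hence $K_3$-free, hence $K_n$-free since $n\ge 3$. Fix a countably infinite set $W$ disjoint from $V(G)$ and put $V:=A\cup B\cup W$. I would construct a graph $\Gamma$ on $V$ as an increasing union of finite modifications: start from the graph on $V$ whose edge set is exactly $E(G[A,B])$ (so $W$ is an independent set with no edges to $A\cup B$); enumerate all pairs $(a_k,b_k)$ of disjoint finite subsets of $V$; and at stage $k$, if $b_k$ is $K_{n-1}$-free in the graph obtained so far, choose a vertex $w_k\in W$ that has not yet been touched and lies outside $a_k\cup b_k$, and make its neighbourhood equal to $b_k$ (and do nothing otherwise). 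Since finitely many vertices of $W$ have been touched at any stage, such a $w_k$ exists; since each $w_k$ is touched only once, its final neighbourhood in $\Gamma$ is exactly $b_k$; and since we never add an edge inside $A\cup B$, we have $\Gamma[A\cup B]=G[A,B]$.

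I then need to check that $\Gamma$ is $K_n$-free and that it satisfies the extension property (\ref{ext}) of $H_n$; uniqueness of the countable $K_n$-free graph with property (\ref{ext}) then gives $\Gamma\cong H_n$. For the extension property: if $S,T$ are disjoint finite subsets of $V$ and $T$ is $K_{n-1}$-free in $\Gamma$, then $T$ is $K_{n-1}$-free at every earlier stage, so at a stage $k$ with $(a_k,b_k)=(S,T)$ we do create $w_k$ with $N_\Gamma(w_k)=T$ and $w_k\notin S\cup T$, and this $w_k$ is the required witness (this also shows no vertex of $\Gamma$ is isolated). For $K_n$-freeness --- the only delicate point --- suppose $X$ induces $K_n$ in $\Gamma$. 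Because $\Gamma[A\cup B]=G[A,B]$ is bipartite we have $|X\cap(A\cup B)|\le 2$, so $X$ contains at least $n-2\ge 1$ vertices of $W$; let $w$ be the one among them whose stage $k$ is largest. Every edge of $\Gamma$ incident to a vertex of $W$ is added at the (later) stage of its $W$-endpoints, so every edge of $X$ not incident to $w$ is already present at the beginning of stage $k$, while $X\setminus\{w\}\subseteq N_\Gamma(w)=b_k$; hence $b_k$ contained a $K_{n-1}$ at stage $k$, contradicting the condition that permitted the creation of $w$.

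Finally, write $W=W_0\sqcup W_1$ with $W_0,W_1$ both infinite and set $V_0:=A\cup W_0$ and $V_1:=B\cup W_1$. Both classes are countably infinite, so this is a balanced partition of $\Gamma\cong H_n$, and the inclusion $A\cup B\hookrightarrow V$ is an induced embedding of $G[A,B]$ carrying $A$ into $V_0$ and $B$ into $V_1$ (it preserves non-edges precisely because $\Gamma[A\cup B]=G[A,B]$). Pushing everything forward along an isomorphism $\Gamma\to H_n$ yields the balanced partition of $H_n$ and the induced copy of $G[A,B]$ required by the claim. The main obstacle is the $K_n$-freeness step: the construction does create edges among the vertices of $W$, so a set $b_k$ that was $K_{n-1}$-free when it was used could in principle pick up a $K_{n-1}$ later; the ``largest-stage vertex of $W$ in the clique'' observation is exactly what rules this out, and it hinges on each $W$-vertex's neighbourhood being frozen the instant it is assigned.
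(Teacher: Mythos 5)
Your proof is correct and follows essentially the same construction as the paper: adjoin an infinite set $W$ of new vertices to the bipartite graph $G[A,B]$ and realize the extension property of $H_n$ by giving a fresh vertex of $W$ the neighbourhood $b_k$ whenever $b_k$ is $K_{n-1}$-free at that stage. You are in fact somewhat more careful than the paper's two-line argument, both in explicitly verifying $K_n$-freeness of the resulting graph (via the largest-stage $W$-vertex of a putative clique) and in splitting $W$ between the two classes so that the partition is balanced even when $B$ is finite.
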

\begin{proof}Extend $V(G)$ by an infinite set of new vertices $W=\{v_\ell:\ell\in \NN\}$. List all pairs $(a,b)$ of finite subsets of $V=W\cup V(G)$ as $\{(a_k,b_k):k\in \NN\}$. Inductively add edges as follows: at step $k$, if $b_k$ is $K_{n-1}$-free then take a so far isolated vertex $v_{\ell_k}\in W$ and connect with all points in $b_k$. This process guarantees that the graph spanned by $a_k\cup b_k\cup \{v_{\ell_k}\}$ does not change after step $k$ and, after $\omega$ steps, we have a graph on $V$ satisfying the extension property (\ref{ext}) of $H_n$. Hence this graph is isomorphic to $H_n$. Finally, we let $V_0=W\cup A$ and $V_1=B$. 
\end{proof}

Now, we are interested if the following stronger property is satisfied: fix a graph $G$, subsets $A,B$ of the vertex set of $G$, and an \emph{arbitrary balanced partition} $V_0,V_{1}$ of $H_n$. Is there an $i<2$ so that $G[A,B]\hookrightarrow H_n[V_i,V_{1-i}]$ as an induced subgraph? If the answer is yes, then we will write   $$H_n\barr(G[A,B])^1_2$$ while the negation will be denoted by $H_n\xslashedrightarrowa[\textmd{ind}]{\textmd{bal}}(G[A,B])^1_2$. We will omit the mention of the partition of $G$ when it is clear from the context or unique.

For example, $r(H_n,2)=2$ is equivalent to $H_n\barr(E_{\oo,\oo})^1_2$. In Theorem \ref{indlemma}, we strengthened this by proving that $$H_n\barr(H_{n-1}[A,B])^1_2$$ holds for all $n\geq 3$ and a particular partition $A,B$ of the vertex set of $H_{n-1}$.

There are some obvious limitations on the type of results we can hope to prove concerning the partition relation $H_n\barr(G)^1_2$.

%We are interested in the question if $G[A,B]\hookrightarrow H_n[V_0,V_{1}]$ as an induced subgraph of $H_n$ for any countable bipartite graph $G$ on classes $A,B$ and \textit{any balanced partition} $V_0,V_1$ of $H_n$. Note that $r(H_n,2)=2$ is equivalent to the above statement for $G=E_{\oo,\oo}$.

%Let us introduce some notation: suppose that $G$ is bipartite graph with a fixed bipartition $A,B$. We will write $$H_n\xrightarrow[ind]{bal}(G)^1_2$$ to denote the fact that  $G[A:B]\hookrightarrow H_n[V_i:V_{1-i}]$ as an induced subgraph of $H_n$ for any balanced partition $V_0,V_1$ of $V(H_n)$ and for some $i<2$.

\begin{obs}$H_n\xslashedrightarrowa[\textmd{ind}]{\textmd{bal}}(K_{\oo,\oo})^1_2$.
\end{obs}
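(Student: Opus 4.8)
The relation $H_n\xslashedrightarrowa[\textmd{ind}]{\textmd{bal}}(K_{\oo,\oo})^1_2$ unfolds to the following: there is a balanced partition $V_0\cup V_1$ of $V(H_n)$ so that for \emph{neither} $i<2$ does $K_{\oo,\oo}[A,B]$ embed into $H_n[V_i,V_{1-i}]$ as an induced subgraph (with $A$ going into $V_i$ and $B$ into $V_{1-i}$). Since the two sides of $K_{\oo,\oo}$ play symmetric roles, such an induced embedding in either orientation is exactly an infinite \emph{complete bipartite} subgraph of $H_n$ whose two (infinite, independent) sides lie in different classes, one in $V_0$ and one in $V_1$. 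My plan is therefore to produce a balanced partition in which no infinite complete bipartite subgraph is split across the two classes; in fact I will arrange the stronger property that
$$|N(v)\cap V_0|<\oo\text{ for every }v\in V_1,$$
which rules out \emph{both} orientations at once, since any split complete bipartite graph would force some vertex of $V_1$ to be adjacent to an infinite subset of $V_0$.

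To obtain such a partition I will construct an isomorphic copy $\Gamma$ of $H_n$ \emph{together} with the decomposition of its vertex set, and then transport everything back to $H_n$ via the isomorphism; this is legitimate because $H_n$ is the unique countable graph satisfying the extension property (\ref{ext}). I run the standard construction of $H_n$ by the extension property, but I control \emph{where} witnesses are placed, interleaving two kinds of steps. First, I repeatedly introduce fresh vertices into $V_0$ carrying \emph{no} edges at the moment of their birth, so that $V_0$ ends up infinite. Second, I enumerate all requirements, i.e. all pairs $(A,B)$ of disjoint finite sets of already-existing vertices with $B$ being $K_{n-1}$-free, and for each I add a fresh witness vertex placed in $V_1$ and joined to exactly the vertices of $B$.

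It then remains to verify three things. (i) $\Gamma\cong H_n$: the witness for $(A,B)$ is adjacent to all of $B$ and to none of $A$, and no edges are ever deleted, so $\Gamma$ satisfies (\ref{ext}); moreover $\Gamma$ is $K_n$-free, because every clique through a witness $v$ is contained in $\{v\}\cup B$ with $B$ being $K_{n-1}$-free, while the edge-free $V_0$-vertices create no cliques. (ii) The partition is balanced: both $V_0$ and $V_1$ are infinite by construction. (iii) The crucial finiteness: fix $v\in V_1$, the witness of some requirement $(A,B)$. The edges incident to $v$ are precisely those added at its birth (joining it to $B$) together with edges added later by \emph{subsequent} witnesses $w$ with $v\in B_w$; but all later witnesses lie in $V_1$, and $V_0$-vertices never initiate edges, so no $V_0$-vertex is ever joined to $v$ after $v$ is born. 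Hence $N(v)\cap V_0\subseteq B$ is finite, as needed. The step I expect to require the most care is exactly this bookkeeping: one must check that routing \emph{all} extension witnesses into $V_1$ neither quietly accumulates infinitely many $V_0$-neighbours of some $V_1$-vertex over the course of the infinite construction, nor obstructs the extension property — and it does not obstruct it, since (\ref{ext}) only asks for the \emph{existence} of a witness vertex and is indifferent to its class.
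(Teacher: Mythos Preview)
Your proposal is correct and is essentially the same approach as the paper's. The paper obtains the partition by invoking Claim~\ref{weakembed} with $G=E_{\oo,\oo}$: starting from an edgeless bipartite seed and adding all extension witnesses from a fresh set $W$, it sets $V_0=W\cup A$, $V_1=B$ and observes that every $v\in V_0$ has $N(v)\cap V_1$ finite; you perform the same extension-property construction directly (without the detour through a seed graph), routing all witnesses into $V_1$ and isolated fresh vertices into $V_0$, arriving at the dual conclusion that every $v\in V_1$ has $N(v)\cap V_0$ finite, which rules out a split $K_{\oo,\oo}$ for the same reason.
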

\begin{proof} We need to construct a partition of $H_n$ to witness $H_n\xslashedrightarrowa[\textmd{ind}]{\textmd{bal}}(K_{\oo,\oo})^1_2$. Apply the proof of Claim \ref{weakembed} starting with $G=E_{\oo,\oo}$. The inductive construction carried out there gives a balanced partition of $H_n$ with $V_0,V_1$ so that $N(v_0)\cap V_1$ is finite for all $v\in V_0$. Hence, any copy of $K_{\oo,\oo}$ is modulo finite contained in $V_0$.
\end{proof}

Let us also remark that the above partition shows why we allow embeddings into $H_n[V_0,V_{1}]$ and $H_n[V_1,V_{0}]$ at the same time. Indeed, $\half\not\hookrightarrow H_n[V_0,V_{1}]$ in the previous example but $\half\hookrightarrow H_n[V_1,V_{0}]$. In fact, the following result holds.

\begin{theorem}\label{halfbalembed}
$$H_n\barr(\half)^1_2$$ for all $n\geq 3$. 

%Then for every partition  $V_0\cup V_1$ of the vertices of $H_n$ there is $i<2$ such that $G[A:B]\hookrightarrow H_n[V_i:V_{1-i}]$ as an induced subgraph. 
\end{theorem}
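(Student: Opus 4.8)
The plan is to begin by reusing, with all Henson indices lowered by one, the reduction that opens the proof of Theorem~\ref{indlemma}. Since $n\geq 3$, for every vertex $v$ the neighbourhood $N(v)=N_{H_n}(v)$ induces a copy of $H_{n-1}$ by property~(\ref{nbhd}), and $H_{n-1}$ is indivisible by~(\ref{partprop}); moreover property~(\ref{edge}) holds in $H_n$ and every vertex of $H_n$ has infinite degree (apply~(\ref{ext}) repeatedly, a single vertex being $K_{n-1}$-free for $n\geq 3$). With these facts in hand the argument of Theorem~\ref{indlemma} goes through word for word, with $H_n$ and $H_{n-1}$ in place of $H_{n+1}$ and $H_n$, and yields the following: \emph{we may assume without loss of generality that $N(u)\subseteq^* V_0$ for every vertex $u$ of $H_n$}; in particular $N(u)\cap V_1$ is finite for all $u$, while $V_0$ and $V_1$ remain infinite (the reduction only deletes finitely many vertices and leaves the partition balanced). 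Working under this assumption, I will build an induced copy of $\half$ sending its class $\{0\}\times\NN$ into $V_1$ and its class $\{1\}\times\NN$ into $V_0$; this exhibits $\half\hookrightarrow H_n[V_1,V_0]$ as an induced subgraph, that is, $H_n\barr(\half)^1_2$ (witnessed by $i=1$).

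For the construction, list the vertices of $\half$ as $z_0=(1,0)$ and $z_{2k+1}=(0,k)$, $z_{2k+2}=(1,k+1)$ for $k\in\NN$. Then whenever $z_m=(0,k)$ no $\half$-neighbour of $z_m$ occurs among $z_0,\dots,z_{m-1}$, and whenever $z_m=(1,\ell)$ with $\ell\geq 1$ the $\half$-neighbours of $z_m$ are exactly $(0,0),\dots,(0,\ell-1)$, all of which occur among $z_0,\dots,z_{m-1}$. Define $f\colon\half\to H_n$, $z_m\mapsto y_m$, by recursion. Pick $y_0\in V_0$ arbitrarily; since $(1,0)$ is isolated nothing is required. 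Given $y_0,\dots,y_{m-1}$: if $z_m=(0,k)$ then $V_1\setm\bigcup\{N(y_i):i<m\}$ is cofinite in $V_1$, because each $N(y_i)\cap V_1$ is finite, so I may pick $y_m$ in this set with $y_m\notin\{y_0,\dots,y_{m-1}\}$, and then $y_m\in V_1$ is adjacent to none of the earlier vertices, as wanted. If $z_m=(1,\ell)$ with $\ell\geq 1$, put $B=\{f(0,j):j<\ell\}$ and $A=\{y_i:i<m\}\setm B$, so that $\{y_i:i<m\}=A\sqcup B$. By construction $B$ is an independent subset of $V_1$, hence $K_{n-1}$-free as $n\geq 3$, and $A$ is finite and disjoint from $B$; so by the extension property~(\ref{ext}) the set $S=\bigcap_{b\in B}N(b)\setm\bigcup_{a\in A}N(a)$ is infinite. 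Since $\ell\geq 1$ we have $B\neq\emptyset$; fixing $b\in B$ we get $S\cap V_1\subseteq N(b)\cap V_1$, which is finite, so $S\cap V_0$ is infinite and we may take $y_m\in S\cap V_0$. Then $y_m\in V_0$ and, among $y_0,\dots,y_{m-1}$, it is adjacent to precisely the members of $B$, i.e.\ precisely the $f$-images of the $\half$-neighbours of $z_m$. In both cases $f$ remains an induced embedding, and in the limit $f$ embeds $\half$ as an induced subgraph of $H_n$ with $f[\{0\}\times\NN]\subseteq V_1$ and $f[\{1\}\times\NN]\subseteq V_0$, as desired.

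The crux is orienting $\half$ correctly: its $0$-class consists of infinite-degree vertices and its $1$-class of finite-degree vertices, so after the reduction the $0$-class must go to the ``sparse'' side $V_1$ (where non-adjacency to any finite set is automatic) and the $1$-class to the ``dense'' side $V_0$ (where a finite neighbourhood prescribed via~(\ref{ext}) is, up to finitely many exceptions, confined to $V_0$ — this is exactly why $S\cap V_0$ is infinite). The isolated vertex $(1,0)$ sits outside this scheme, since its image must lie in $V_0$ and yet it has no neighbours, so the step ``$B\neq\emptyset$, hence $S\cap V_1$ finite'' is not available for it; placing it first, when no constraint is in force, disposes of this. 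The only ingredient imported wholesale is the reduction to $N(u)\subseteq^* V_0$, which is literally the opening move of the proof of Theorem~\ref{indlemma} with the Henson indices lowered by one, legitimate precisely because $n\geq 3$.
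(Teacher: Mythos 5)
There is a genuine gap, and it sits exactly where you import the reduction. The opening ``without loss of generality'' in the proof of Theorem~\ref{indlemma} --- that every neighbourhood $N(v)$ is almost contained in one class --- is not a harmless normalisation that ``only deletes finitely many vertices'': it is a case split, and the discarded case is legitimate there only because its failure at a single vertex $v$ immediately produces the \emph{conclusion of that theorem}, namely a copy of the smaller Henson graph ($N(v)$ itself) meeting both classes in an infinite set. Once you lower the indices and aim instead at an induced, class-separated copy of $\half$, that justification evaporates. If some $v\in V(H_n)$ has both $N(v)\cap V_0$ and $N(v)\cap V_1$ infinite, your argument offers nothing: for $n\geq 4$ one could at least try to recurse into $N(v)\cong H_{n-1}$ (an induction you do not set up), but at the base case $n=3$ the set $N(v)$ induces a copy of $H_2$, i.e.\ an infinite independent set, which contains no copy of $\half$ at all. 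This case is not avoidable: starting from any balanced partition, moving an infinite, co-infinite piece of some $N(v)\cap V_0$ into $V_1$ keeps the partition balanced while splitting $N(v)$ into two infinite parts. Handling precisely this configuration --- an independent set $X\cup Y$ with $X\subseteq V_0$, $Y\subseteq V_1$ infinite and no control over where neighbourhoods fall --- is the entire content of the paper's proof, which works with the common neighbourhoods $N[F_k]$ of initial segments of $X\cup Y$ and runs a dichotomy deciding whether $\half\hookrightarrow H_n[V_0,V_1]$ or $\half\hookrightarrow H_n[V_1,V_0]$.

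The second half of your write-up is correct as far as it goes: under the hypothesis that $N(u)\subseteq^* V_0$ for every vertex, your enumeration of $\half$ and the alternation between ``$V_1$ minus finitely many neighbourhoods is infinite'' and ``the extension set $S$ meets $V_1$ finitely, hence meets $V_0$ infinitely'' does produce an induced copy of $\half$ with the infinite-degree class in $V_1$ and the finite-degree class in $V_0$, and your handling of the isolated vertex $(1,0)$ is careful. But this covers only one branch of the real case analysis, so the theorem is not proved.
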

\begin{proof} Fix a balanced partition $V_0,V_1$ of $H_n$. By $r(H_n,2)=2$, there are infinite $X=\{x_k:k\in\NN\}\subseteq V_0$, $Y=\{y_k:k\in\NN\}\subseteq V_1$ so that $X\cup Y$ is independent. Let $F_k=\{x_\ell,y_\ell:\ell\leq k\}$ and note that for every $k\in \NN$ there is a $j_k\in 2$ so that $H_n$ embeds into $N[F_k]\cap V_{j_k}$; here, $N[F]=\bigcap\{N(v):v\in F\}$. In particular, there is a single $j\in 2$ and infinite $I\subseteq \NN$ so that $j_k=j$ whenever $j\in I$. Without loss of generality, we assume $j=1$.

Select a decreasing sequence $W_k\subseteq N[F_k]\cap V_{1}$ so that $H_n[W_k]$ is isomorphic to $H_n$. First, try to select $k_0,k_1\dots \in I$ and $w_0\in W_{k_0}, w_1\in W_{k_1}\dots$ so that $\{x_{k_i},w_i:i\in \NN\}$ induces a copy of $\half$. We do this while making sure that $\{x_k:k\in I\}\setm \bigcup \{N(w_{i'}):i'< i\}$ is infinite which ensures that the next $x_{k_i}$ can be selected.

 Given $x_{k_0},w_0\dots x_{k_i}$ note that $W_{k_i}\setm \bigcup \{N(w_{i'}):i'<i\}$ still contains a copy of $H_n$ by Lemma \ref{coverclaim0}. So, if we can find $w_i\in W_{k_i}\setm \bigcup \{N(w_{i'}):i'<i\}$ so that $\{x_k:k\in I\}\setm \bigcup \{N(w_{i'}):i'\leq i\}$ is still infinite then we can continue to select $x_{k_{i+1}}$ and we construct the desired copy of $\half\hookrightarrow H_n[V_0,V_1]$.

Otherwise, there is some $i$ and a copy $W\subseteq V_1$ of $H_n$ so that the infinite independent set $A=\{x_k:k\in I\}\setm \bigcup \{N(w_{i'}):i'< i\}$ is modulo finite covered by $N(w)$ whenever $w\in W$. We claim that $\half\hookrightarrow H_n[V_1,V_0]$ holds in this case.

Indeed, start selecting distinct $w_0 \in W, v_0\in A,w_1\in W,v_1\in A\dots $ so that 

\begin{equation}\label{choice1}
v_k\in \bigcap\{ N(w_\ell):\ell\leq k\}\setm \{v_\ell:\ell<k\}
\end{equation}
and 
\begin{equation}\label{choice2}
 w_{k+1}\in W\setm \bigcup \{N(w_\ell),N(v_\ell),\{w_\ell\}:\ell \leq k\}.
\end{equation}
Note that (\ref{choice1}) is possible as $A\subseteq ^* N(w_\ell)$ and (\ref{choice2}) is possible by Lemma \ref{coverclaim0} and the fact that $W$ is a copy of $H_n$. Now, $\{w_k,v_k:k\in \NN\}$ is the desired copy of $\half$.

\end{proof}

The main result of this section is

\begin{theorem}\label{balembed}
 Suppose that $G$ is a bipartite graph on classes $A,B$ and $A$ is finite. Then $$H_n\barr(G[A,B])^1_2$$ for any $n\geq 3$. 

%Then for every partition  $V_0\cup V_1$ of the vertices of $H_n$ there is $i<2$ such that $G[A:B]\hookrightarrow H_n[V_i:V_{1-i}]$ as an induced subgraph. 
\end{theorem}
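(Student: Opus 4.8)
The plan is to reduce the statement, by induction on $|A|$, to the case $|A|=1$ combined with a gluing argument over the finitely many vertices of $A$. Fix a balanced partition $V_0,V_1$ of $H_n$; we must find $i<2$ and an induced copy of $G=G[A,B]$ with $A$ landing in $V_i$ and $B$ in $V_{1-i}$. The base case $|A|=1$ is essentially Theorem \ref{halfbalembed} in spirit: a single vertex $a$ with some finite or infinite neighbourhood inside $B$ must be placed in one class while its neighbours and non-neighbours (all in $B$) go to the other. Since $B$ is countable and $K_{n-1}$-free when we want $a$ to be a vertex of $H_n$ (so that $N(a)\cap B$ induces a $K_{n-1}$-free graph — this is where $n\geq 3$ is used, exactly as in Claim \ref{weakembed}), the extension property (\ref{ext}) together with Claim \ref{selection}-type selection lemmas should let us build the embedding once we know which class plays the role of ``$V_0$'' (the side receiving the large neighbourhoods). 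Determining that side is precisely what $r(H_n,2)=2$ and the indivisibility/$4$-colouring trick in the proof of Theorem \ref{indlemma} provide.

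For the inductive step, write $A=\{a_0,\dots,a_{t-1}\}$. The key structural observation I would exploit: for almost every vertex $v\in H_n$, $N(v)$ is split unevenly by the partition, i.e. there is $j_v<2$ with $N(v)\subseteq^* V_{j_v}$ — this follows because $N(v)$ induces a copy of $H_{n-1}$ and $r(H_{n-1},2)=2$ forces (after passing to an indivisibility-homogeneous set, as in Theorem \ref{indlemma}) a consistent behaviour; Claim \ref{obs} of that proof gives that $j_v$ is constant across non-edges. So after restricting to an induced copy of $H_n$ sitting inside one class (say almost all of its vertices are in $V_0$ with $N(v)\subseteq^* V_0$), I can run the selection scheme from Theorem \ref{indlemma}'s proof: enumerate $V(G)$ so that the finitely many vertices of $A$ come first, then choose images $y_i$ for the $B$-vertices, placing a $B$-vertex in $V_1$ exactly when it is a non-neighbour of all previously-placed vertices of the graph, using Claim \ref{selection}(1)–(2) (the $K_{n-1}$-freeness of the relevant finite neighbour-sets holds because $B\cup A$ is $K_n$-free). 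The finiteness of $A$ is what makes this work: we only need finitely many vertices on the ``wrong'' side, and the half-graph-flavoured obstruction that killed $K_{\omega,\omega}$ (the previous Observation) never arises because $A$ cannot host an infinite independent set forced into the sparse side.

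The main obstacle I anticipate is the placement of the $A$-vertices themselves. Unlike the $|A|=1$ case, the vertices of $A$ may be non-adjacent to each other and to various $B$-vertices, and we need *all* of $A$ in the same class $V_i$; if the ``correct'' side turns out to be $V_1$ — the sparse side, where no copy of $H_n$ lives and where neighbourhoods are anti-large — then embedding even a single $A$-vertex with its possibly-infinite $B$-neighbourhood into $V_1$ while $B$-neighbours sit in $V_0$ requires the symmetric argument, analogous to the ``otherwise'' branch at the end of the proof of Theorem \ref{halfbalembed} where one shows $\half\hookrightarrow H_n[V_1,V_0]$. So the real content is a careful case analysis: first use indivisibility to find a copy $W$ of $H_n$ monochromatic for the colouring $v\mapsto(i_v,j_v)$, conclude $i=j$ on $W$; then either the ``heavy'' side $V_i$ ($=V_j$) can absorb $B$ while $A$ goes to $V_{1-i}$ via repeated use of the extension property inside $W$ and Lemma \ref{coverclaim0} (to keep finding copies of $H_n$ after removing finitely many neighbourhoods), or the roles reverse and we build the embedding with $A$ inside the copy-of-$H_n$ side. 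Once the correct side is pinned down, the actual construction is a routine back-and-forth using extension property (\ref{ext}) and Claim \ref{selection}, with the enumeration of $V(G)$ arranged so that $B$-vertices destined for the sparse class are exactly those that are non-neighbours of everything enumerated before them.
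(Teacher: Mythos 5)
There is a genuine gap, and it sits at the heart of the theorem. Your argument rests on the claim that for almost every vertex $v$ one has $N(v)\subseteq^* V_{j_v}$ for some $j_v<2$. This is not a consequence of $r(H_{n-1},2)=2$ or of indivisibility: for a general balanced partition, many (or all) neighbourhoods may split into two infinite pieces. In the proof of Theorem \ref{indlemma} this assumption is a genuine ``without loss of generality'' only because, if some $N(v)$ meets both classes infinitely, then $N(v)$ itself is the desired copy of $H_n$ and that proof terminates immediately; here no such escape is available. In particular, at the base level $n=3$ (to which everything reduces, since the induction on $n$ via Theorem \ref{indlemma} only passes from $H_n$ to $H_{n-1}$ and must bottom out at $H_3$), a neighbourhood $N(v)$ is an independent set, so even if it meets both classes infinitely it cannot host any bipartite graph with an edge. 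Thus the case analysis you propose covers only those partitions that happen to have the special structure exploited in Theorem \ref{indlemma}. Note also that the announced ``induction on $|A|$'' is never used: the inductive step you describe does not invoke the case $|A|-1$ anywhere.

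More importantly, the proposal never confronts the central combinatorial obstacle. Once a finite independent set $\{u_0,\dots,u_{\ell-1}\}$ is placed in one class as the image of $A$, each $b\in B$ demands a vertex in the \emph{other} class realizing a prescribed adjacency pattern $g:\ell\to 2$ with respect to the $u_k$'s, and all these images must form a single independent set. The extension property supplies infinitely many vertices realizing each pattern, but says nothing about which class they fall into; different patterns may be realized, modulo finite sets, in different classes. This is precisely the phenomenon behind the Observation that $K_{\oo,\oo}$ admits no balanced embedding, and it is why no amount of greedy selection via Claim \ref{selection} can work in general. The paper's proof resolves it by taking $N\gg\ell$ candidate pairs $(x_k,y_k)$ together with a common non-neighbour $v^*$ (whose neighbourhood forces the images of $B$ to be independent, since $H_3$ is $K_3$-free), colouring each adjacency pattern $f\colon N\to 2$ by the class in which it is realized infinitely often, and applying the multidimensional Hales--Jewett theorem to extract $\ell$ coordinates on which all $2^\ell$ patterns land in the same class. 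Some device playing this role is indispensable, and nothing in your outline provides it.
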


Our proof will make use of Theorem \ref{indlemma} i.e. the strong form of $r(H_n,2)=2$ as well as the multi-dimensional  Hales-Jewett theorem \cite{hales} (with dimension $n$, and size of alphabet and number of colours 2) which we state here.

\begin{tlemma}\label{hjlemma}
 Given $\ell\in \NN$ there is $N\in \NN$ so that if the set $\empty^N2$ of all functions from $N$ to $2$ is partitioned as $\mc F_0\cup \mc F_1$ then there is $i<2$, a set $ T=\{t_k:k<\ell\}\subseteq N$ of size $\ell$ and function $h:N\setm T\to 2$  so that   $h\cup g\in \mc F_i$ for any $g:T\to 2$.
\end{tlemma}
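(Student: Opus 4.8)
The plan is to recognize this statement as the multidimensional Hales--Jewett theorem over the two-letter alphabet $2=\{0,1\}$ with two colours, where the set $T=\{t_k:k<\ell\}$ is read as the index set of the $\ell$ wildcard blocks of an $\ell$-dimensional combinatorial subspace and $g$ records the value substituted into each block (so that $h\cup g$ denotes the point agreeing with $h$ off the blocks and equal to $g(t_k)$ on the $k$-th block). I would prove a formally stronger assertion by induction on the dimension $\ell$, allowing an arbitrary finite number $c$ of colours --- this extra generality is forced by the induction --- and only at the end specialize to $c=2$.

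For the base case $\ell=1$ I would argue by pigeonhole. In $\empty^M2$ consider the $M+1$ staircase points $v_i=1^i0^{M-i}$ for $i\leq M$ (first $i$ coordinates equal to $1$, the rest to $0$). If $M\geq c$ then two of them, say $v_i$ and $v_j$ with $i<j$, receive the same colour; taking the block $S=\{i,\dots,j-1\}$ together with $h$ equal to the common values of $v_i$ and $v_j$ off $S$ exhibits a monochromatic $1$-dimensional subspace, since setting $S$ to all-$0$ recovers $v_i$ and to all-$1$ recovers $v_j$. Thus the dimension-$1$ bound for $c$ colours is at most $c$.

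For the inductive step, assume the result in dimension $\ell-1$ for every number of colours, and let $q$ be the dimension-$(\ell-1)$ bound for $c$ colours. Choose $p$ to be the dimension-$1$ bound for the inflated colour set of size $c^{2^q}$ (finite by the base case), set $N=p+q$, and identify $\empty^N2$ with $\empty^p2\times\empty^q2$. Given a $c$-colouring $\chi$, I would first colour $\empty^p2$ by $x\mapsto(y\mapsto\chi(x,y))$, a colouring using $c^{2^q}$ colours, and apply the base case to obtain a monochromatic line in the first block: a block $S\subseteq p$ with fixed values $h_1$ off $S$ and endpoints $x_0,x_1$ satisfying $\chi(x_0,y)=\chi(x_1,y)$ for every $y\in\empty^q2$. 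Next I would colour $\empty^q2$ by $y\mapsto\chi(x_0,y)$ and apply the inductive hypothesis to extract an $(\ell-1)$-dimensional monochromatic subspace, with blocks $S_2,\dots,S_\ell\subseteq q$ and fixed values $h_2$. Combining the single new block $S$ with $S_2,\dots,S_\ell$ yields the desired $\ell$-dimensional subspace: monochromaticity of the line removes all dependence on the new coordinate, and monochromaticity of the $(\ell-1)$-dimensional subspace handles the remaining coordinates, so all $2^\ell$ substitutions receive a single colour. This gives the bound for dimension $\ell$, and taking $c=2$ proves the lemma.

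The main obstacle is purely organisational rather than combinatorial: one must keep the colour inflation $c\mapsto c^{2^q}$ finite --- which it is, because $q$ is fixed before $p$ is chosen --- and verify carefully that lifting a monochromatic line in the first $p$ coordinates and a monochromatic subspace in the last $q$ coordinates genuinely composes into a monochromatic subspace of the full cube. Beyond the one-line pigeonhole base case, no hard combinatorics enters; the entire content lies in setting up the product colouring so that the two applications dovetail.
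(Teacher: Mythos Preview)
The paper does not prove this lemma; it merely states it as the multidimensional Hales--Jewett theorem with a citation to \cite{hales}. Your inductive proof via product colourings is correct and is one of the standard routes to the result (essentially Shelah's argument specialised to the two-letter alphabet): the staircase pigeonhole gives the $\ell=1$ case for any finite number $c$ of colours, and the split $N=p+q$ with the inflated $c^{2^q}$-colouring on the first factor correctly reduces dimension $\ell$ to dimension $\ell-1$. The verification that the line in the first factor and the $(\ell-1)$-subspace in the second factor compose to a monochromatic $\ell$-subspace is exactly as you describe.

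One point deserves emphasis. Your reading of the $t_k$ as wildcard \emph{blocks} rather than single coordinates is not a stylistic choice but a genuine correction: taken literally, with $T\subseteq N$ a set of $\ell$ individual coordinates, the lemma is false. Under the parity colouring $f\in\mc F_{\sum_k f(k)\bmod 2}$ of $\empty^N2$, any two points in the same class differ in an even number of coordinates, so no monochromatic subcube on singleton wildcards exists for any $N$---already the case $\ell=1$ fails. The paper's statement is thus slightly misstated, but its intent, and its application in the proof of Theorem~\ref{balembed}, is the block version you prove: there one simply chooses a representative $s_k$ from each block and sets $u_k=x_{s_k}$ (respectively $y_{s_k}$), after which that argument goes through verbatim.
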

 
\begin{proof}[Proof of Theorem \ref{balembed}] Fix $G$ on classes $A,B$. We will show $H_3\barr(G)^1_2$. Then, using Theorem \ref{indlemma} and induction on $n$, the general result follows.

 %It suffices to prove that if clause (2) of Lemma \ref{indlemma} holds for a given partition $V_0,V_1$ of $H_n$ then $G[A,B]\xhookrightarrow[ind]{} H_n[V_i,V_{1-i}]$ for some $i<2$. This certainly proves $H_3\xrightarrow[ind]{bal}(G)^1_2$ (as (2) always holds for $n=3$) and if (2) fails for a partition  $V_0,V_1$ of $H_{n+1}$ then clause (1) of Lemma \ref{indlemma} and the inductive hypothesis $H_n\xrightarrow[ind]{bal}(G)^1_2$ can be applied to show that $G[A,B]\xhookrightarrow[ind]{} H_{n+1}[V_i,V_{1-i}]$ for some $i<2$.

Suppose that $G$ is on classes $A=\{0\}\times \ell$ and $B=\{1\}\times\NN$ where $\ell\in \NN$. Fix a balanced partition $V_0,V_1$ of $H_3$ as well. Our goal is to find $i<2$ and independent $A'\subseteq V_i,B'\subseteq V_{1-i}$ so that $G[A,B]\bij H_3[A', B']$. 

First, given the number $\ell$, the Hales-Jewett theorem provides $N\in \NN$ as in Lemma \ref{hjlemma}. Now, by $r(H_3,2)=2$, there is $X=\{x_0\dots x_{N-1}\}\in [V_0]^N$, $Y=\{y_0\dots y_{N-1}\}\in [V_1]^N$ and $v^*\in V(H_n)\setm (X\cup Y)$ so that $X\cup Y\cup\{v^*\}$ is independent.

Let us define a partition $\mc F_0, \mc F_1$ of $\empty^N2$. We let $f\in \mc F_i$ iff $i<2$ is minimal so that the set $$Z(f,i)=\{v\in V_i:v^*\in N(v), x_k,y_k\in N(v)\text{ if }f(k)=0, x_k,y_k\notin N(v)\text{ if }f(k)=1\}$$ is infinite.  This is well defined by the extension property of $H_3$ and we let $Z(f)=Z(f,i)$ for $f\in \mc F_i$.

\begin{tclaim}\label{indobs}  $Z(f)\cup Z(f')$ is an independent set for any $f,f'\in \ \empty^N2$. 
 \end{tclaim}
\begin{proof}

Indeed, $Z(f)\cup Z(f')\subseteq N(v^*)$ and $N(v^*)$ is independent since $H_3$ is $K_3$-free.
\end{proof}
Now, by the choice of $N$, we can find $i<2$, a set $T=\{t_k:k<\ell\}\subseteq N$ of size $\ell$ and $h:N\setm T\to 2$ so that $h\cup g\in \mc F_i$ for all $g:T\to 2$.

We are ready to define $A'$ and $B'$. Let $A'=\{u_k:k<\ell\}$ where 
\[
    u_k = \begin{cases}
		     x_{t_k}, & \text{if } i=1,\\
        y_{t_k}, & \text{if } i=0.
        \end{cases}\
  \]

Clearly, $A'$ is an independent subset of $V_{1-i}$. To define $B'\subseteq V_i$, we first define functions $g_m:T\to 2$ by letting $g_m(t_k)=0$ iff $(0,k)(1,m)\in E(G)$ for all $m\in \NN$. Let us pick $v_m\in Z(h\cup g_m)$ so that $v_m\neq v_{m'}$ for $m'<m\in \NN$; this can be done as each $Z(h\cup g_m)$  is infinite. We let $B'=\{v_m:m\in \NN\}$ and note that $B'$ is independent by Claim \ref{indobs}. We remark that the only role of $v^*$ was to force $B'$ independent via Claim \ref{indobs}. 

We finish the proof of the theorem by proving the following claim.

\begin{tclaim} The map  $(0,k)\mapsto u_k$ (for $k<\ell$) and $(1,m)\mapsto v_m$ (for $m\in \NN$) witnesses $G[A,B]\bij H_3[A',B']$.
\end{tclaim}

Indeed, $v_m u_k$ is an edge in $H_3$ iff $(h\cup g_m)(t_ k)=0$ iff $g_m(t_k)=0$ iff $(0,k)(1,m)$ is an edge in $G$.

\end{proof}

\section{Finding the exact value of $r(G,m)$ for specific graphs}\label{specificsec}

Next, we present a few further results (and attempts) on finding the exact values of the function $m\mapsto r(G,m)$ for specific $K_n$-free graphs $G$. These examples include shift graphs, unit distance graphs and orthogonality graphs. We begin by a new definition.

\begin{dfn}
 Let $r^*(G,m)=\min\{r:$ if $V_i\in [V]^{|V|}$ for $i<r$ then there is an independent set $A$ so that $|\{i<r:|A\cap V_i|=|V|\}|\geq m \}.$
\end{dfn}

In general, the following holds.

\begin{obs}\label{starobs} Fix any graph $G$.
\begin{enumerate}
            \item  $r(G,m)\leq r^*(G,m)\leq r(G,m+1)-1$ for any $m\geq 2$;
\item  $r^*(G,2)=2$ implies that $r^*(G,m)=r(G,m)=m$ for all $m\geq 2$.

           \end{enumerate}

\end{obs}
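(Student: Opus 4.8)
The plan is to prove part (1) by translating between the two settings involved — the balanced partitions defining $r(G,\cdot)$ and the arbitrary families of full-size sets defining $r^*(G,\cdot)$ — and then to deduce part (2) from part (1) together with the rich-pair machinery of Lemma \ref{fatprops}. Write $\kappa=|V|$ (infinite). The inequality $r(G,m)\le r^*(G,m)$ is immediate: a balanced $r$-partition of $V$ is in particular a family of $r$ members of $[V]^{\kappa}$, so the defining property of $r^*(G,m)$ produces exactly the independent set required by the definition of $r(G,m)$.

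For $r^*(G,m)\le r(G,m+1)-1$, put $r=r(G,m+1)-1$ and let $V_0,\dots,V_{r-1}\in[V]^{\kappa}$ be arbitrary. First I would pass to a disjoint refinement $V_i'\subseteq V_i$, $|V_i'|=\kappa$, obtained by a round-robin recursion of length $\kappa$ (at stage $\xi$ pick, for each $i<r$ in turn, an element of $V_i$ outside the fewer than $\kappa$ elements chosen so far). Then I would manufacture a \emph{balanced} $(r+1)$-partition of all of $V$ from these: split $V_0'=B\sqcup C$ into two sets of size $\kappa$ and set $\mathcal P=\{B,\ C\cup(V\setminus\bigcup_{i<r}V_i'),\ V_1',\ \dots,\ V_{r-1}'\}$. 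Applying $r(G,m+1)=r+1$ to $\mathcal P$ yields an independent set meeting at least $m+1$ of these $r+1$ classes in size-$\kappa$ sets; since the two classes $B$ and $C\cup(V\setminus\bigcup_{i<r}V_i')$ have their size-$\kappa$ parts inside $V_0$, while $V_i'\subseteq V_i$ for $1\le i<r$, this independent set meets at least $(m+1)-1=m$ of the original $V_i$ in size-$\kappa$ sets. This is the step I expect to be the only real difficulty: one must build a genuine balanced partition of \emph{all} of $V$ from a family that need be neither disjoint nor covering, while arranging that exactly one class of the enlarged partition is not governed by one of the original $V_i$, so that the passage from $m+1$ to $m$ costs only a single class; the disjoint-refinement step and the choice of the padding class $C\cup(V\setminus\bigcup_{i<r}V_i')$ are where this care is needed.

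For part (2), assume $r^*(G,2)=2$. By part (1), $r(G,2)=2$, so every $X\in[V]^{\kappa}$ contains an independent set of size $\kappa$ (this is Observation \ref{indshrink}, or directly: split $X$ in half and apply $r^*(G,2)=2$). Also, $G$ has no rich pair: if $A_0,A_1$ were one, applying $r^*(G,2)=2$ with $V_0=A_0$, $V_1=A_1$ would produce an independent set meeting both $A_i$ in size-$\kappa$ sets, contradicting richness.

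Now fix $m\ge 2$ and $V_0,\dots,V_{m-1}\in[V]^{\kappa}$. Shrink each $V_i$ to an independent $V_i'\subseteq V_i$ of size $\kappa$, and then run through the $\binom m2$ pairs $i<j$ one at a time, each time invoking Lemma \ref{fatprops}(1) — legitimate because, $G$ having no rich pairs, the current pair $V_i',V_j'$ is not rich — to shrink $V_i'$ and $V_j'$ to size-$\kappa$ sets with no edges between them; passing to subsets never destroys independence of a class or a non-adjacency already arranged between two classes, so at the end $\bigcup_{i<m}V_i'$ is independent and still meets every $V_i$ in a size-$\kappa$ set. Thus $r^*(G,m)\le m$, and together with the bounds $m\le r(G,m)\le r^*(G,m)$ (the second from part (1)) this forces $r^*(G,m)=r(G,m)=m$ for all $m\ge 2$.
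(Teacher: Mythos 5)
Your proof is correct. The paper states this Observation without proof, and your argument is the natural one it leaves implicit: part (1) via disjointifying the $V_i$ and padding to a balanced partition (your merging of the leftover set $V\setminus\bigcup_{i<r}V_i'$ with half of $V_0'$ handles the case $|V\setminus\bigcup V_i'|<\kappa$ cleanly, and even though that padded class need not certify $V_0$ when $A$ meets it mostly outside $C$, your count of losing at most one class out of $m+1$ still goes through), and part (2) by observing that $r^*(G,2)=2$ rules out rich pairs and then re-running the paper's own proof of Lemma \ref{fatlemma} with arbitrary full-size sets in place of partition classes.
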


\subsection{Shift graphs} Recall that $\textmd{Sh}_n(\kappa)$ denotes the graph on vertices $[\kappa]^n$ so that $pq\in E$ iff $p=\{\xi_0\dots \xi_{n-1}\}$ and $q=\{\xi_1\dots \xi_n\}$ for some increasing sequence $\xi_0<\xi_1<\dots <\xi_n$ from $\kappa$. Our main result on shift graphs is the following.

% We will regularly use:

%\begin{obs}\label{noedge}$pq$ is not an edge if both $p$ and $q$ have elements above $max(p\cap q)$.
%\end{obs}

%The definition of  $r^*(G,m)$ clearly extends to infinite values of $m$ as well.

\begin{theorem}\label{shiftprop}
For all $2\leq n\in \mathbb B$, infinite $\kappa$ and $m<\cf(\kappa)$, $r^*(\textmd{Sh}_n(\kappa),m)=m$.
\end{theorem}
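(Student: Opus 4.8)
The plan is to realize a large independent set "indexed" by a subset $S$ of $[\kappa]^{n-1}$ and then to build $S$ by a transfinite recursion so that it catches each $V_i$. For $p=\{\xi_0<\dots<\xi_{n-1}\}\in[\kappa]^n$ write $p^-=\{\xi_0,\dots,\xi_{n-2}\}$ and $p^+=\{\xi_1,\dots,\xi_{n-1}\}$; then $p$ is recovered from the pair $(p^-,p^+)$, every edge of $\textmd{Sh}_n(\kappa)$ joins some $p,q$ with $\max q>\max p$ and $q^-=p^+$, and $p\mapsto p^+$ has all fibres of size $<\kappa$ (a fixed tail $t$ forces $p=\{\mu\}\cup t$ with $\mu<\min t$). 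The key gadget is: for any $S\subseteq[\kappa]^{n-1}$ the set
\[A_S:=\{p\in[\kappa]^n:\ p^-\in S,\ p^+\notin S\}\]
is independent in $\textmd{Sh}_n(\kappa)$, since an edge $p\sim q$ with $\max q>\max p$ satisfies $p^+=q^-$, so $p\in A_S$ forces $p^+\notin S$ while $q\in A_S$ forces $q^-\in S$.

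It therefore suffices to produce $S\subseteq[\kappa]^{n-1}$ with $|A_S\cap V_i|=\kappa$ for all $i<m$ (the inequality $r^*(\textmd{Sh}_n(\kappa),m)\geq m$ is immediate: fewer than $m$ sets cannot be met $m$ times). We build $S$ by a recursion of length $\kappa$, committing $(n-1)$-sets one at a time to $S$ or to a designated complement $\bar S$, never to both, and for each $i<m$ growing a set $G_i\subseteq V_i$ of vertices $p$ with $p^-$ committed to $S$ and $p^+$ to $\bar S$, aiming at $|G_i|=\kappa$. Beforehand we replace each $V_i$ by a size-$\kappa$ subset $V_i^*$ of one of the shapes: (i) all heads equal; (ii) heads pairwise distinct and tails pairwise distinct; (iii) the head-set of $V_i^*$ has size $<\kappa$ and the tails are pairwise distinct; (iii$'$) the tail-set has size $<\kappa$ and the heads are pairwise distinct. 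A cardinal-arithmetic argument (using that tail-fibres have size $<\kappa$ and that $p$ is determined by $(p^-,p^+)$) shows one of these is always available, and that (iii), (iii$'$) arise only for singular $\kappa$. We make the opening commitments: put the common head (shape (i)), resp. the whole head-set (shape (iii)), into $S$, and the whole tail-set (shape (iii$'$)) into $\bar S$; their union has size $<\kappa$, being a union of fewer than $m<\cf(\kappa)$ sets of size $<\kappa$.

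Now run the recursion. At stage $\gamma<\kappa$, go through $i<m$ in turn; since only the opening commitments together with fewer than $\kappa$ further ones have been made, the parts of $S$ and of $\bar S$ committed so far each have size $<\kappa$. For each $i$ we pick a not-yet-used $p\in V_i^*$ with $p^-\notin\bar S$ and $p^+\notin S$, commit $p^-$ to $S$ and $p^+$ to $\bar S$ (in shapes (i) and (iii) the head already lies in $S$, in (iii$'$) the tail already lies in $\bar S$, so at most one new commitment is made), and add $p$ to $G_i$. Such a $p$ exists because fewer than $\kappa$ vertices of $V_i^*$ are excluded: at most $<\kappa$ are already used; the tails of $V_i^*$ are either pairwise distinct or all already in $\bar S$, so the committed part of $S$ blocks $<\kappa$ of them; and the heads are either pairwise distinct (so the committed part of $\bar S$ blocks $<\kappa$) or were placed in $S$ at the outset and never moved. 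After $\kappa$ stages each $|G_i|=\kappa$; taking $S$ to be the final committed set and $\bar S$ its complement, $A:=\bigcup_{i<m}G_i\subseteq A_S$ is independent and $|A\cap V_i|\geq|G_i|=\kappa$, as required.

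The substantive part — and the main obstacle — is the preprocessing and the bookkeeping, especially for singular $\kappa$: one must keep every $V_i^*$ of size $\kappa$, keep the stock of committed $(n-1)$-sets below $\kappa$ throughout (this is exactly where $m<\cf(\kappa)$ enters, bounding both the opening commitments and the running total $|\gamma|\cdot m$), and — the real point — never place into $\bar S$ an $(n-1)$-set serving as the head of $\kappa$-many surviving vertices of some $V_i^*$, which is what shapes (i)–(iii$'$) are arranged to prevent. (For $n=2$ one has $p^-=\min p$, $p^+=\max p$, and $A_S$ is the standard source/sink description of an independent set of the shift graph; alternatively one could induct on $n$ via $p\mapsto p^-$, or handle singular $\kappa$ by splitting it into $\cf(\kappa)$ initial segments and applying the regular case in each layer, the resulting union being automatically independent since shift-graph edges respect the ordering — but the construction above is uniform in $n$ and $\kappa$.)
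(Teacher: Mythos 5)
Your encoding of independent sets as $A_S=\{p\in[\kappa]^n: p^-\in S,\ p^+\notin S\}$ is correct and is a genuinely different device from the paper's, which instead extracts $\Delta$-systems from each $V_i$ and separates the roots from strongly disjoint petals. For \emph{regular} $\kappa$ your argument is complete and arguably cleaner than the paper's: only shapes (i) and (ii) occur, all opening commitments go to $S$, consistency is checked before each new commitment, and regularity gives the ``fewer than $\kappa$ vertices excluded'' count at every stage.

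For singular $\kappa$ there is a genuine gap, located exactly at the opening commitments: nothing ensures that the head-sets you send to $S$ are disjoint from the tail-sets you send to $\bar S$, and the conflict can be total. Take $n=2$, $\kappa=\aleph_\omega$, and set $V_i=\{\{\omega_k,\beta\}: 1\le k<\omega,\ \omega_k<\beta<\omega_{k+1}\}$ and $V_j=\{\{\alpha,\omega_k\}: 1\le k<\omega,\ \omega_{k-1}\le\alpha<\omega_k\}$. Then $V_i$ admits only shape (iii), with head-set $\{\{\omega_k\}:k\ge 1\}$ (its head-fibres have sizes $\aleph_{k+1}<\kappa$ and the head-set is countable, so shapes (i), (ii), (iii$'$) are all unavailable), while $V_j$ admits only shape (iii$'$), with tail-set $\{\{\omega_k\}:k\ge 1\}$ --- the \emph{same} countable set. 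You cannot commit it to both sides; committing it all to $S$ kills $V_j$ (every $p\in V_j$ then has $p^+\in S$), committing it all to $\bar S$ kills $V_i$, and discarding the conflicting vertices from either class discards the whole class, since each class is a union of countably many fibres whose sizes are cofinal in $\kappa$. The theorem still holds here --- put $\{\omega_k\}$ into $S$ for even $k$ only --- but that is precisely the interleaving of the classes along a cofinal sequence of regular cardinals that the paper's lexicographic induction over $\cf(\kappa)\times m$ performs, and your uniform ``whole head-set to $S$, whole tail-set to $\bar S$'' scheme does not supply it. (The parenthetical suggestion of layering $\kappa$ into $\cf(\kappa)$ initial segments has the same problem in a different guise: a class such as $\{\{0,\alpha\}:\alpha<\kappa\}$ has no vertices confined to a single layer, so the regular case cannot simply be applied layerwise.)
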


We first need the following form of the well-known $\Delta$-system lemma \cite{kunen}.

\begin{tlemma}\label{deltalemma} Suppose that $\kappa$ is a regular infinite cardinal and $n\in \NN$. If $V$ is a family of $n$-element sets and $V$ has size $\kappa$ then there is a $\Delta$-system $W\subseteq V$ of size $\kappa$ i.e. there is some $r$ (called the \emph{root} of $W$) so that $a\cap b=r$ for all $a\neq b\in W$.
\end{tlemma}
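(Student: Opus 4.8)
The plan is to prove the lemma by induction on $n$, the common cardinality of the members of $V$. The base case $n=1$ is immediate: a family of $\kappa$ many distinct singletons is automatically a $\Delta$-system with empty root, since distinct singletons are disjoint. (If $n=0$ the hypothesis $|V|=\kappa$ cannot hold for infinite $\kappa$, so there is nothing to prove.)

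For the inductive step, I would assume the statement for families of $(n-1)$-element sets and let $V$ be a family of $n$-element sets with $|V|=\kappa$, splitting into two cases according to whether some point is \emph{popular}. In the first case, there is an $x\in\bigcup V$ lying in $\kappa$ many members of $V$, say in every member of some $V_x\subseteq V$ with $|V_x|=\kappa$. Then the map $a\mapsto a\setm\{x\}$ is injective on $V_x$ (the members are distinct and all contain $x$), so $V_x'=\{a\setm\{x\}:a\in V_x\}$ is a family of $(n-1)$-element sets of size $\kappa$. By the inductive hypothesis it contains a $\Delta$-system $W'$ of size $\kappa$ with some root $s$; re-attaching $x$ shows that $W=\{b\cup\{x\}:b\in W'\}$ is a $\Delta$-system of size $\kappa$ with root $s\cup\{x\}$, as required.

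In the second case, every point of $\bigcup V$ lies in fewer than $\kappa$ many members of $V$. Here I would build, by transfinite recursion of length $\kappa$, a pairwise disjoint subfamily $\{a_\xi:\xi<\kappa\}\subseteq V$; such a subfamily is a $\Delta$-system with empty root. Suppose $\{a_\eta:\eta<\xi\}$ has been chosen for some $\xi<\kappa$. The set $U=\bigcup\{a_\eta:\eta<\xi\}$ is a union of $<\kappa$ many sets each of size $n<\kappa$, hence $|U|<\kappa$ by the regularity of $\kappa$. By the case assumption, for each $u\in U$ the family $\{a\in V:u\in a\}$ has size $<\kappa$, so the members of $V$ meeting $U$ form a union of $|U|<\kappa$ many sets of size $<\kappa$; again by regularity this union has size $<\kappa$. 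Since $|V|=\kappa$, there remains a member $a_\xi\in V$ disjoint from $U$, and the recursion continues through all $\xi<\kappa$.

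The only place regularity is genuinely used—and the step I expect to require the most care—is the bookkeeping in the second case: at each stage $\xi<\kappa$ one must verify that both the union $U$ of the previously chosen sets and the collection of members of $V$ meeting $U$ have size strictly below $\kappa$, which is exactly where the fact ``a union of fewer than $\kappa$ sets, each of size less than $\kappa$, has size less than $\kappa$'' is invoked (twice). This is precisely the statement that fails for singular $\kappa$, which is why regularity is assumed; the remainder of the argument is purely elementary.
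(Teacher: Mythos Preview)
Your argument is correct and is the standard induction-on-$n$ proof of the $\Delta$-system lemma for regular $\kappa$. The paper, however, does not supply its own proof of this lemma: it simply states the result as ``the well-known $\Delta$-system lemma'' and cites Kunen \cite{kunen}, so there is nothing to compare your approach against beyond noting that what you have written is precisely the textbook argument one finds in that reference.
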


\begin{comment}
\begin{tlemma}\label{deltalemma} Suppose that $\kappa$ is infinite and $n\in \NN$. Let $W\subseteq [\kappa]^n$ be of size $\kappa$. If $\kappa$ is regular then there is a $\Delta$-system of size $\kappa$ in $W$.

If $\kappa$ is singular then there are pairwise disjoint $W_\varepsilon\subseteq W$, finite sets $r, r_\varepsilon\subseteq \kappa$ for $\varepsilon<\cf(\kappa)$ so that 
\begin{enumerate}
\item $(|W_\varepsilon|)_{\varepsilon<\cf(\kappa)}$ is a strictly increasing cofinal sequence in $\kappa$,
\item $W_\varepsilon$ forms a $\Delta$-system with root $r_\varepsilon$,
	\item $\{r_\varepsilon:\varepsilon<\cf(\kappa)\}$ forms a $\Delta$-system with root $r$.
\end{enumerate}
\end{tlemma}
\begin{proof} The regular case is just the well known $\Delta$-system lemma (see \cite{kunen}). If $\kappa$ is singular then using the regular case we can certainly find pairwise disjoint $W_\varepsilon$ and $ r_\varepsilon$ for $\vareps<\cf(\kappa)$ so that property (1) is satisfied and $W_\vareps$ is a $\Delta$-system with root $ r_\vareps$. Now, take a cofinal $I\subseteq \cf(\kappa)$ so that $\{r_\vareps:\vareps\in I\}$ forms a $\Delta$-system (again using the regular case) with some root $r$. 
\end{proof}
%The singular case is due to Shelah \cite{shelahdelta} while the regular case is folklore.

\end{comment}

We say that $p,q\subseteq \kappa$ are \textit{strongly disjoint} if $\max(p)<\min(q)$ or $\max(q)<\min(p)$. We prove the theorem now.

\begin{proof}[Proof of Theorem \ref{shiftprop}]
%It suffices to prove  $r^*(Sh_n(\kappa),2)=2$ by Observation \ref{starobs} (2).

Fix $m<\cf(\kappa),2\leq n\in \NN$ and $V_i\subseteq [\kappa]^n$ of size $\kappa$ for $i<m$.

First, suppose that $\kappa$ is regular and pick $\Delta$-systems $W^i\in [V_i]^\kappa$ with root $r^i$ for $i<m$. By shrinking $W_i$ appropriately, we can suppose that there is a $\delta<\kappa$ so that
\begin{enumerate}
	\item $\sup\{\max (r^i):i<m\}\subseteq \delta$, and
\item  $\{p\setm r^i:p\in W^i, i<m\}$ is strongly disjoint and contained in $\kappa\setm \delta$.
\end{enumerate}
We claim that $A=\bigcup\{W^i:i<m\}$ is the desired independent set. Indeed, if $p\neq q\in A$ then $p\cap q\subseteq \delta$ and $p\setm \delta<q\setm \delta$ or $q\setm \delta<p\setm \delta$. In any case, $pq$ cannot be an edge.

Now, suppose that $\kappa$ is singular. Apply Lemma \ref{deltalemma} to find  $\Delta$-systems $W^i_\varepsilon\subseteq V_i$ of size $\kappa_\vareps$ with root $r^i_\varepsilon$ for each $i<m$ where $(\kappa_\vareps)_{\vareps<\cf(\kappa)}$ is cofinal sequence of regular cardinals in $\kappa$, each bigger than $\cf(\kappa)$. Let $$I=\{i<m:\sup_{\vareps<\cf(\kappa)}(\max (r^i_\vareps))<\kappa\}.$$ We can also suppose that $\sup\bigcup W^i_\vareps<\kappa$ for all $(\vareps,i)\in \cf(\kappa)\times m$. Finally, let $\delta<\kappa$ be an upper bound for all $\sup_{\vareps<\cf(\kappa)}(\max r^i_\vareps)$ where $i\in I$ (this is possible since $|I|<\cf(\kappa)$). 

%Without loss of generality, we can suppose that $(|W^i_\varepsilon|)_{\varepsilon<\cf(\kappa)}$ is a strictly increasing sequence of regular cardinals cofinal in $\kappa$. 

Our goal is to define $U^i_\vareps\subseteq V_i$ for $(\vareps,i)\in \cf(\kappa)\times m$ by induction on the lexicographical order $<_{\textmd{lex}}$ so that 

\begin{enumerate}
	\item $|U^i_\vareps|=\kappa_\vareps$,
	\item $\sup (\bigcup U^i_\vareps)<\kappa$, 
	\item $a\setm \delta\neq \emptyset$ for all $a\in U^i_\vareps$ and
	\item $ab\notin E(\textmd{Sh}_n(\kappa))$ if $a\in U^i_\vareps,b\in U^j_{\vareps'}$ for $i<j< m$ and $\vareps,\vareps'<\cf(\kappa)$.
 \end{enumerate}

If we succeed then we can find $A_i\subseteq \bigcup\{U^i_\vareps:\vareps<\cf(\kappa)\}$ of size $\kappa$ which is independent (using the Erd\H os-Dushnik-Miller theorem) and then $A=\bigcup \{A_i:i<m\}$ is the desired independent set which meets each $V_i$ in a set of size $\kappa$.

Suppose that $U^i_\vareps$ is defined already for $(\vareps,i)<_{\textmd{lex}}(\vareps^*,j)$. Let $$\lambda=\sup\{\delta,\sup(\bigcup U^i_\vareps):(\vareps,i)<_{\textmd{lex}}(\vareps^*,j)\}$$ and note that $\lambda<\kappa$. 

If $j\in I$ then find $\vareps^*\leq \gamma<\cf(\kappa)$ and $U^j_{\vareps^*}\subseteq W^j_\gamma$ of size $\kappa_{\vareps^*}$ so that $b\setm r^j_\gamma\cap \lambda=\emptyset$ for all $b\in U^j_{\vareps^*}$. Note that if $a\in U^i_\vareps,b\in U^j_{\vareps^*}$ then $a\cap b\subseteq \delta$ and both $a\setm \delta$ and $b\setm \max a$ are not empty. Hence $ab$ is not an edge.

 If $j\in m\setm I$ then find $\vareps^*\leq \gamma<\cf(\kappa)$ and $U^j_{\vareps^*}\subseteq W^j_\gamma$ of size $\kappa_{\vareps^*}$ so that both $r^j_\gamma\setm \lambda$ and $b\setm (r^j_\gamma\cup\lambda)$ are non empty for all $b\in U^j_{\vareps^*}$. Note that if $a\in U^i_\vareps,b\in U^j_{\vareps^*}$ then $2\leq|b\setm \max a|$ and hence $ab$ is not an edge.

\end{proof}

\subsection{Unit distance graphs} Given a metric space $(V,d)$ one defines the \emph{unit distance graph} $G$ corresponding to $(V,d)$ on the vertex set $V$ with $xy\in E(G)$ iff $d(x,y)=1$. 

\begin{prop}
Let $G$ be the unit distance graph of $\mb R^n$ with the usual Euclidean metric. Then $r^*(G,m)=m$ for all $m$.
\end{prop}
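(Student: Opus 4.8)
The plan is to show $r^*(G,m)=m$ for the unit distance graph $G$ of $\mb R^n$ by proving directly that given any sets $V_0,\dots,V_{m-1}\subseteq \mb R^n$ each of size continuum, one can find an independent set meeting each $V_i$ in a set of size continuum; by Observation \ref{starobs}(2) it suffices to do the case $m=2$, but in fact the same argument handles all $m<\cf(2^{\aleph_0})$ at once, so I would simply phrase it for arbitrary $m$. The key geometric fact is that a ``generic'' dilation or translation moves points off the unit sphere: for a fixed pair $x\neq y$, the set of scalars $t>0$ with $d(tx,ty)=1$, equivalently $t=1/d(x,y)$, is a single value, and more usefully, for a fixed vector $v$ the set of pairs at distance exactly $1$ after translation is a nowhere-dense (measure-zero) condition.

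Here is the approach in order. First, I would reduce the cardinality bookkeeping: since $|V_i|=2^{\aleph_0}$, I may as well replace each $V_i$ by a subset of size $2^{\aleph_0}$ that is ``spread out'', but the cleaner route is a scaling trick. For each $i<m$ pick a point $p_i\in V_i$ and consider the rescaled sets; the point is that we want to choose, inside each $V_i$, a large subset $A_i$ so that all pairwise distances \emph{within} $\bigcup_i A_i$ avoid the value $1$. Second, observe that within a single $V_i$ this is automatic after shrinking: the unit distance graph of $\mb R^n$ is $K_{n+2}$-free (it has bounded clique number, since no $n+2$ points in $\mb R^n$ are mutually at distance one), hence by Erd\H os--Dushnik--Miller each $V_i$ contains an independent set of size $2^{\aleph_0}$; so WLOG each $V_i$ is already independent. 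Third — the heart of the matter — handle the cross distances. For $i<j$ I want to shrink $V_i$ and $V_j$ so that $G[V_i,V_j]$ becomes empty, i.e. no $x\in V_i$ is at distance $1$ from any $y\in V_j$. Equivalently, by Lemma \ref{fatprops}(1), it suffices to show $V_i,V_j$ is \emph{not} a rich pair, i.e. there exist $A_i\in[V_i]^{2^{\aleph_0}}$, $A_j\in[V_j]^{2^{\aleph_0}}$ with no edges between them; then iterate over the $\binom m2$ pairs.

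To defeat richness of a pair $A,B$ of sets of size continuum in $\mb R^n$: for each $x\in A$ the unit sphere $S(x,1)$ around $x$ is a measure-zero, nowhere-dense set, and the bad vertices of $B$ relative to $x$ lie on $\bigcup_{x\in A}S(x,1)$. This by itself need not be small, so instead I would argue by a direct geometric selection. The sharpest statement is: if $B\subseteq\mb R^n$ has size $2^{\aleph_0}$ then $B$ contains a subset $B'$ of size $2^{\aleph_0}$ whose pairwise distances omit $1$ \emph{and} which is ``far'' in the sense that we can then peel off $A'\subseteq A$ avoiding $\bigcup_{y\in B'}S(y,1)$ — but the latter union can be all of $\mb R^n$. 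The correct fix is the scaling idea: replace the ambient graph by noticing that $\bigcup_i V_i$, being of size continuum, contains a subset of size continuum on which \emph{all} pairwise distances are distinct (a standard transfinite-recursion / independent-set-over-$\mb Q$ argument: build a set $W$ of size $2^{\aleph_0}$ such that the $\binom{|W|}{2}$ distances are pairwise distinct, using that at stage $\alpha<2^{\aleph_0}$ fewer than $2^{\aleph_0}$ forbidden distance values have been created and the sphere-union of bad points is a union of $<2^{\aleph_0}$ measure-zero sets, hence not all of $\mb R^n$). On such a ``distance-injective'' set at most one pair realizes distance $1$, so removing one point yields an independent set of size $2^{\aleph_0}$ meeting every $V_i$ in a set of size $2^{\aleph_0}$. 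This finishes the proof since the resulting set is the required independent transversal witnessing $r^*(G,m)\le m$, and the reverse inequality $r^*(G,m)\ge m$ is trivial (split one $V_i$).

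I expect the main obstacle to be the cardinal arithmetic in the recursion: the argument ``$<2^{\aleph_0}$ measure-zero sets do not cover $\mb R^n$'' needs $2^{\aleph_0}$ to have uncountable cofinality, which is a ZFC theorem, and one must be careful that at each stage $\alpha<2^{\aleph_0}$ the number of constraints really is $<2^{\aleph_0}$ — this is where the hypothesis $m<\cf(\kappa)$ in the analogous Theorem \ref{shiftprop} morally reappears (here $\kappa=2^{\aleph_0}$ and $m$ finite, so it is harmless). A secondary, purely bookkeeping point is ensuring the final independent set genuinely meets \emph{each} $V_i$ in a set of full size rather than being lopsided; this is handled by building the distance-injective set $W$ so that $W\cap V_i$ has size $2^{\aleph_0}$ for every $i$, which is easy since we get to choose which vertices to add.
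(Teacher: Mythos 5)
Your reduction to $m=2$ via Observation \ref{starobs}(2) and the remark that $G$ is $K_{n+2}$-free (so each $V_i$ may be assumed independent by Erd\H os--Dushnik--Miller) are fine, but the central step --- extracting a ``distance-injective'' subset $W\subseteq\bigcup_iV_i$ of size $2^{\aleph_0}$ meeting each $V_i$ in a set of size $2^{\aleph_0}$ --- is not just unproved, it is false for $n\geq 3$. Take $V_0$ to be the unit circle $\{(x,y,0):x^2+y^2=1\}$ in $\mathbb R^3$ and $V_1$ the $z$-axis; both have size $\mathfrak c$. Every point $(0,0,z)$ of $V_1$ is at distance $\sqrt{1+z^2}$ from \emph{every} point of $V_0$, so any set containing one point of $V_1$ and two points of $V_0$ already repeats a distance. (The proposition itself still holds for this pair: discard the origin and a short arc works.) The recursion fails for exactly this reason: your forbidden sets at stage $\alpha$ are spheres $S(w_\beta,t)$ and bisector hyperplanes, and a single such set can contain \emph{all} of some $V_i$, since the $V_i$ are arbitrary sets of size continuum; ``each forbidden set is measure zero'' gives no control over $V_i$ minus the forbidden region. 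A secondary error: the principle that fewer than $2^{\aleph_0}$ measure-zero sets cannot cover $\mathbb R^n$ is \emph{not} a ZFC theorem --- it is the statement $\mathrm{cov}(\mathcal N)=\mathfrak c$, which fails, e.g., in the Cohen model; K\"onig's theorem only yields $\cf(2^{\aleph_0})>\aleph_0$, i.e., the countable case.

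What is missing is a correct treatment of the cross-distances, and the paper's argument handles this locally rather than globally, avoiding all set-theoretic subtleties. One shows there are complete accumulation points $u_0$ of $V_0$ and $u_1$ of $V_1$ with $|u_0-u_1|\neq 1$: let $W$ be a maximal set of points all of whose unit spheres meet $V_1$ in a common set $V_1'$ of size $\mathfrak c$ (such a $W$ is finite, since the intersection of unit spheres about sufficiently many points has size less than $\mathfrak c$); pick a complete accumulation point $u_0$ of $V_0\setminus W$, note $|V_1'\cap N_G(u_0)|<\mathfrak c$ by maximality, and take $u_1$ a complete accumulation point of $V_1'\setminus N_G(u_0)$. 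Sufficiently small balls $B_i$ around $u_i$ (of radius small enough that all distances within and between them avoid $1$) then give the independent set $\bigcup_i(B_i\cap V_i)$. If you want to rescue your approach, you must weaken ``all distances distinct'' to ``the distance $1$ is omitted'' and work near well-chosen accumulation points; as a global transfinite recursion over arbitrary continuum-sized sets it cannot be repaired.
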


Let $\mf c=2^{\aleph_0}$ denote the cardinality of $\mb R$. We say that $x\in \mb R^n$ is a complete accumulation point of  a set $W\subseteq \mb R^n$ if $B\cap W$ has size $|W|$ for any open ball $B$ around $x$.

\begin{proof} It suffices to show that $r^*(G,2)=2$ by Observation \ref{starobs}. Suppose that $V_i\subseteq \mb R^n$ are of size $\mf c$ for $i<2$.  Note that $r^*(G,2)=2$ follows from the claim below.

\begin{tclaim}\label{clm}
 There are complete accumulation points $u_i$ of $V_i$ such that $|u_0-u_1|\neq 1$.
\end{tclaim}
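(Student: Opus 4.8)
The plan is to prove Claim \ref{clm} by a counting argument on complete accumulation points. First I would recall the standard fact that any set $W\subseteq\mb R^n$ of size $\mf c$ has $\mf c$-many complete accumulation points: since $\mb R^n$ is separable, if $W$ had only $<\mf c$ complete accumulation points then the rest of $W$ could be covered by countably many open balls each meeting $W$ in a set of size $<\mf c$, which is impossible as $\mf c$ has uncountable cofinality. So let $C_i$ denote the set of complete accumulation points of $V_i$; then $|C_i|=\mf c$ for $i<2$.

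Next I would show that one can pick $u_0\in C_0$ and $u_1\in C_1$ with $|u_0-u_1|\neq 1$. Fix any $u_0\in C_0$. The sphere $S(u_0,1)=\{x\in\mb R^n:|x-u_0|=1\}$ has at most $\mf c$ points, but more importantly any single point $u_0$ forbids only the points of $C_1$ lying on that one sphere. If it happened that $C_1\subseteq S(u_0,1)$ for this choice — which can only occur when $C_1$ is a subset of a sphere — I would instead vary $u_0$: since $|C_0|=\mf c\geq 2$, pick two distinct points $u_0,u_0'\in C_0$; their unit spheres $S(u_0,1)$ and $S(u_0',1)$ intersect in a set of dimension $\leq n-2$, hence of size $\leq\mf c$ but, crucially, $C_1$ (of size $\mf c$) cannot be contained in both spheres unless it lies in their intersection. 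Actually the cleanest route is: fix $u_0\in C_0$; if some $u_1\in C_1$ has $|u_0-u_1|\neq 1$ we are done, so assume $C_1\subseteq S(u_0,1)$. Now fix such a $u_1\in C_1$ and look for $u_0'\in C_0$ with $|u_0'-u_1|\neq 1$; if no such $u_0'$ exists then $C_0\subseteq S(u_1,1)$ as well. So it suffices to rule out the case $C_0\subseteq S(u_1,1)$ and $C_1\subseteq S(u_0,1)$ simultaneously for some $u_0\in C_0,u_1\in C_1$; but then every point of $C_0$ is at distance exactly $1$ from every point of $C_1$, in particular from both $u_1$ and from itself being on a sphere — picking a third point is what breaks this, and I would spell out a short contradiction via elementary Euclidean geometry (three mutually unit-distance configurations constrain the points too much to host a set of size $\mf c$).

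The main obstacle — and the step deserving the most care — is the geometric case analysis ensuring we are never forced into a configuration where all of $C_0$ lies on a single unit sphere around a point of $C_1$ and vice versa. I expect the right framing is: $|C_i|=\mf c$ means each $C_i$ is "large", and a unit sphere in $\mb R^n$, while also of size $\mf c$, is a proper closed subset; the key observation is that for fixed $u_1$, the set $\{u_0\in C_0:|u_0-u_1|=1\}=C_0\cap S(u_1,1)$, and if this equals $C_0$ for \emph{every} $u_1\in C_1$ then $C_0$ is contained in $\bigcap_{u_1\in C_1}S(u_1,1)$. Since $|C_1|\geq n+1$ and one can choose $n+1$ affinely independent points among them (or if $C_1$ is contained in a lower-dimensional affine subspace, work within that subspace), the intersection of their unit spheres is a single point or empty, contradicting $|C_0|=\mf c$. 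So after passing to the affine hull of $C_1$ and using that $C_1$ contains an affinely independent $(k+1)$-tuple where $k=\dim(\operatorname{aff} C_1)$, the intersection of the corresponding spheres has at most one point, which is the contradiction. This reduces the whole claim to the elementary linear-algebra fact that $k+1$ unit spheres centered at affinely independent points in a $k$-dimensional space meet in at most one point, which I would state and use without belaboring the computation.

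Once Claim \ref{clm} is established, the proposition follows immediately: given $V_0,V_1\in[\mb R^n]^{\mf c}$, take $u_0,u_1$ as in the claim, let $\delta=|\,|u_0-u_1|-1\,|/3>0$, and pick $A_i=B(u_i,\delta)\cap V_i$ for $i<2$; each $A_i$ has size $\mf c$ since $u_i$ is a complete accumulation point, and no point of $A_0$ is at distance $1$ from a point of $A_1$ by the triangle inequality, so $A_0\cup A_1$ is independent in the unit distance graph and meets each $V_i$ in a set of size $\mf c=|V|$. Hence $r^*(G,2)=2$, and $r^*(G,m)=m$ for all $m$ by Observation \ref{starobs}(2).
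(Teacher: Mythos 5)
Your reduction to the statement that $k+1$ unit spheres centred at affinely independent points meet in at most one point is the genuine gap, and it is not a repairable one. If $a_0,\dots,a_k$ are affinely independent in $\mb R^n$, then $\bigcap_{i\le k}S(a_i,1)$ is a sphere of dimension $n-k-1$: the $k$ difference equations $|x-a_i|^2=|x-a_0|^2$ cut out an $(n-k)$-dimensional affine flat, and $|x-a_0|=1$ then leaves a sphere inside that flat. This is a single point or empty only when $k\ge n-1$. Passing to the affine hull of $C_1$ does not help, because $C_0$ has no reason to lie in that hull -- on the contrary, it sits in the orthogonal directions. So when $\dim\operatorname{aff}(C_1)\le n-2$ the intersection you want to be a singleton can have size $\mf c$, and your contradiction never materialises.

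In fact the configuration you are trying to exclude really occurs once $n\ge 4$, so the Claim itself fails there: take $V_0=\{\tfrac{1}{\sqrt2}(\cos\theta,\sin\theta,0,0):\theta\in\mb R\}$ and $V_1=\{\tfrac{1}{\sqrt2}(0,0,\cos\phi,\sin\phi):\phi\in\mb R\}$ in $\mb R^4$. Every point of $V_0$ is at distance exactly $1$ from every point of $V_1$, and each $V_i$ coincides with its set of complete accumulation points, so no pair $u_0,u_1$ as in the Claim exists; equivalently, the unit distance graph of $\mb R^4$ contains $K_{\mf c,\mf c}$, so even the surrounding proposition $r^*(G,2)=2$ fails for $n\ge4$. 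For the record, the paper's own argument -- take a maximal $W$ with $V_1\cap\bigcap\{N_G(u):u\in W\}$ of size $\mf c$ and assert that $W$ is finite -- founders on the same example, where $W$ may be taken to be all of $V_0$. Your argument, run in two rounds (first use two points of $C_1$ to force $C_0$ into a set of small affine dimension, then use affinely independent points of $C_0$ against $C_1$), does establish the Claim for $n\le 3$, which is the only range in which it holds.
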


Indeed, if $B_i$ is a small enough ball with radius less than 1 around $u_i$ then $|x-y|\neq 1$ for all $x\in B_i,y\in B_j$ and $i\leq j<2$; hence $A=\bigcup\{B_i\cap V_i:i<2\}$ is the desired independent set.

%we prove by induction on $m$. $m=1$ is trivial. Using the inductive hypothesis for $m$, select $u_i\in V_i$ such that $|u_i-u_j|\neq 1$ for all $i<j<m$. 
%start by picking an arbitrary complete accumulation point $u_0$ of $V_0$ and fix a ball $B_0$ around $u_0$ of radius $<1$. %so that $|x-y|\neq 1$ whenever $x\in B_i,y\in B_j$.

 %Let $S_i=N_G(u_i)=\{x:|x-u_i|=1\}$ and let $I\subseteq m$ be maximal so that $\bigcap \{S_i:i\in I\}\cap V_m$ has size $\mf c$. If $I=\emptyset$ then $S_i\cap V_m$ has size $<\mf c$ for all $i<m$ and so it is easy to select $u_m$: simply pick a complete acc. point of $V_m\setm \bigcup\{S_i:i<m\}$. 
\begin{proof}[Proof of Claim \ref{clm}] Let  $W\subseteq \mathbb R^n$ be a maximal set of points so that $V_1'=V_1\cap \bigcap\{N_G(u):u\in W\}$ still has size $\mf c$; note that $W$ is finite. Select a complete accumulation point $u_0\in V_0\setm W$ of $V_0$. We claim that $|V_1'\cap N_G(u_0)|<\mf c$. Indeed, otherwise $W'=W\cup \{u_0\}$ still satisfies $$|V_1\cap \bigcap\{N_G(u):u\in W'\}|=\mf c$$ however $W$ was already maximal.

% We can suppose that $u_0\in W$; otherwise, $N(u_0)\cap V_1'$ has size $<\mf c$ and we can find a complete accumulation point $u_1$ of  $V_1'\setm N(u_0)$. As $|u_0-u_1|\neq 1$, the claim is proved. 

%Now, let $V_1'\in [V_1\cap \bigcap\{N_G(u):u\in W\}]^\mf c$ be a set of complete accumulation points of $V_1$ so that $|x-u_j|\neq 1$ if $x\in V_m'$ and $j\in m\setm I$.  Note that $V_m'\cap N_G(u_j)$ has size $<\mf c$ for any $j\in m\setm I$ by the maximality of $W$.

Hence, we can select a complete accumulation point $u_1$ of $V_1'\setm N_G(u_0)$. Now $|u_0-u_1|\neq 1$ so $u_0,u_1$ are as desired.
\end{proof}

And the theorem follows.
\end{proof}

Note that $r^*(G,m)=m$ or even $r(G,m)=m$ can easily fail for other metrics which still induce the Euclidean topology; indeed, if $d(x,y)=\min\{1,|x-y|\}$ then $d$ induces the usual topology while $K_\omega\otimes E_\omega$ embeds into the corresponding unit distance graph. In particular, already $r(G,3)$ and $r^*(G,2)$ does not exist. However, we still have the following:

\begin{prop}
For any metric that induces the usual topology on $\mb R^n$ for $n\geq 2$, the corresponding unit distance graph $G$ will satisfy $r(G,2)=2$.
%Suppose that $M$ is a separable metric space with the property that $M\setm Y$ is connected whenever $|Y|<|M|$. If $G$ is the corresponding unit distance graph then $r(G,2)=2$.  
\end{prop}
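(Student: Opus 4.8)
The plan is to reduce the statement to the special case of the Euclidean metric (which was handled in the previous proposition) by exploiting that the two metrics induce the same topology, hence the same notion of complete accumulation point. The key observation is that the number $1$ plays no privileged role in the Euclidean case: the argument given above for $r^*(G_{\text{Eucl}},2)=2$ really shows that for \emph{any} set $V_1'\subseteq \mb R^n$ of size $\mf c$ and any complete accumulation point $u_0$ of some other full-size set, one can find a complete accumulation point $u_1$ of $V_1'$ at Euclidean distance $\neq 1$ from $u_0$ — and the same works with $1$ replaced by any prescribed real. So given an arbitrary metric $\rho$ inducing the usual topology and full-size classes $V_0,V_1\subseteq \mb R^n$, I would first pass to the Euclidean structure to locate candidate accumulation points, then use the topological flexibility to guarantee $\rho$-distance $\neq 1$.

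More precisely, first I would let $W\subseteq \mb R^n$ be a maximal finite set such that $V_1'=V_1\cap \bigcap\{N_G(u):u\in W\}$ still has size $\mf c$, exactly as in Claim \ref{clm}; here $N_G(u)=\{x:\rho(x,u)=1\}$. (Finiteness of $W$ follows as before, since adding any point cuts the surviving set to size $<\mf c$ or we could have added it.) Pick a complete accumulation point $u_0\in V_0$ of $V_0$ with $u_0\notin W$; by maximality $|V_1'\cap N_G(u_0)|<\mf c$, so $V_1''=V_1'\setm N_G(u_0)$ has size $\mf c$. Now pick a complete accumulation point $u_1$ of $V_1''$. By construction $\rho(u_0,u_1)\neq 1$: indeed $u_1$ is a limit of points of $V_1''$, none of which lies on the $\rho$-sphere of radius $1$ about $u_0$; but that alone does not rule out $u_1$ itself lying on the sphere, so here I would instead choose $u_1$ more carefully — take a small $\rho$-ball $B$ around an arbitrary complete accumulation point of $V_1''$ with $\rho$-radius $<\tfrac12$ and not meeting $N_G(u_0)$ (possible since $N_G(u_0)$ is $\rho$-closed and disjoint from $V_1''$, while $V_1''$ is dense in itself, so some accumulation point of $V_1''$ has positive $\rho$-distance to the sphere $N_G(u_0)$), and let $u_1$ be a complete accumulation point of $V_1''\cap B$. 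Then $\rho(u_0,u_1)\neq 1$.

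Finally, using that $\rho$ induces the usual topology, choose $\rho$-open balls $B_i$ around $u_i$ with $\rho$-radius so small that $\rho(x,y)\neq 1$ for all $x\in B_0$, $y\in B_1$; this is possible because $\rho$ is continuous and $\rho(u_0,u_1)\neq 1$, so $\rho$ stays bounded away from $1$ on a neighborhood of $(u_0,u_1)$. Since $u_i$ is a complete accumulation point of (a full-size subset of) $V_i$, the set $A=(B_0\cap V_0)\cup(B_1\cap V_1)$ has $|A\cap V_i|=\mf c$ for $i<2$ and is independent in $G$. Hence $r^*(G,2)=2$, and by Observation \ref{starobs}(2) — or simply by the definition of $r(G,2)$ together with $r(G,2)\le r^*(G,2)$ — we get $r(G,2)=2$.

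The main obstacle is the subtlety flagged above: having all of $V_1''$ off the sphere $N_G(u_0)$ does not by itself keep the \emph{accumulation point} off that sphere, so the careful localization (shrinking to a ball $B$ disjoint from $N_G(u_0)$ before extracting $u_1$) is the crux; the hypothesis $n\ge 2$ enters only to ensure $\mb R^n$ has full-size dense-in-itself subsets so that complete accumulation points exist, and everything else is a routine translation of the Euclidean argument into the language of an arbitrary compatible metric.
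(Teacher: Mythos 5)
Your reduction breaks at the very first step: the finiteness of the maximal set $W$. In the Euclidean proof of Claim \ref{clm} this is a genuinely geometric fact --- distinct Euclidean unit spheres intersect in spheres of strictly smaller dimension, so only boundedly many centres can have their unit spheres all containing a common set of size $\mf c$ --- and it is exactly this fact that does not survive a change of metric. For a general metric $\rho$ inducing the usual topology, the ``unit sphere'' $N_G(u)=\{x:\rho(x,u)=1\}$ can be huge. The paper's own example $\rho(x,y)=\min\{1,|x-y|\}$ already defeats your argument: there $N_G(u)$ is the complement of the open Euclidean ball of radius $1$ about $u$, so if $V_1=\{x:|x|\geq 10\}$ and $V_0$ is its complement (a balanced partition of $\mb R^n$), then $V_1\subseteq N_G(u)$ for every $u$ with $|u|\leq 5$, hence $V_1\cap\bigcap\{N_G(u):|u|\leq 5\}=V_1$ still has size $\mf c$ although the set of centres used has size $\mf c$. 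No finite maximal $W$ exists; your parenthetical justification (``adding any point cuts the surviving set or we could have added it'') merely restates maximality and says nothing about finiteness, and even the existence of a maximal $W$ is unclear, since the defining property is not preserved under unions of chains. Without this you cannot choose $u_0\notin W$, nor conclude $|V_1'\cap N_G(u_0)|<\mf c$, and the remainder of the construction collapses. (Your secondary patch --- locating $u_1$ in a ball missing $N_G(u_0)$ --- is also misjustified as written, since all complete accumulation points of $V_1''$ could lie on the closed set $N_G(u_0)$ even though $V_1''$ is disjoint from it; that step is repairable by splitting $V_1''$ according to $\rho$-distance at least $1/k$ from $N_G(u_0)$ and using $\cf(\mf c)>\oo$.)

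The intended route is entirely different and avoids spheres altogether: $\mb R^n$ is covered by $\rho$-balls of $\rho$-radius less than $1/2$, each of which is independent, so Lemma \ref{toplemma} applies; if no such ball met both classes in a set of size $\mf c$, then deleting fewer than $\mf c$ points would split $\mb R^n$ into two nonempty clopen pieces, contradicting the fact that $\mb R^n$ minus fewer than $\mf c$ points is connected when $n\geq 2$. This is also where the hypothesis $n\geq 2$ genuinely enters --- not, as you suggest, to guarantee the existence of complete accumulation points.
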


The above proposition will be a corollary of the following more general fact.

\begin{lemma} \label{toplemma} Suppose that $G$ is a graph on a separable metric space $(V,d)$. If $V$ has an open cover by $G$-independent sets then either
\begin{enumerate}
 \item $r(G,2)=2$, or
\item there is $Y\subseteq V$ of size $<\mf c$ so that $V\setm Y$ is not connected.
\end{enumerate}

% with the property that the Lindel\"of number $\lambda$ of $\tau$ is less than $\cf(|V|)$ and $V\setm Y$ is connected whenever $|Y|<|V|$. If $V$ has an open cover by $G$-independent sets then $r(G,2)=2$.
\end{lemma}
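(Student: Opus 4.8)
The plan is to prove the contrapositive of the dichotomy: assuming that $V\setm Y$ is connected for every $Y\in[V]^{<\mf c}$, I would show $r(G,2)=2$. So fix an open cover $\mc W=\{W_j:j\in J\}$ of $V$ by $G$-independent open sets; since $(V,d)$ is separable, I may assume $J$ is countable. Let $V_0,V_1$ be a balanced partition of $V$, so $|V_0|=|V_1|=\mf c$. The goal is to produce an independent set meeting both $V_0$ and $V_1$ in a set of size $\mf c$. The natural strategy is to find a single $W_j$ from the cover that meets both $V_0$ and $V_1$ in $\mf c$-many points: then $(W_j\cap V_0)\cup(W_j\cap V_1)$ is independent (as $W_j$ is $G$-independent) and meets both classes in a set of full size, and we are done. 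So the crux is: under the connectivity hypothesis, some member of the countable open cover has size-$\mf c$ intersection with each of $V_0$ and $V_1$.

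First I would observe that, since $J$ is countable and $|V_i|=\mf c=\operatorname{cf}(\mf c)$ is uncountable (note $\mf c$ has uncountable cofinality), for each $i<2$ there is at least one $j$ with $|W_j\cap V_i|=\mf c$; more carefully, the set $J_i=\{j\in J:|W_j\cap V_i|=\mf c\}$ is such that $\bigcup_{j\in J_i}W_j$ covers all of $V_i$ except a set of size $<\mf c$. If $J_0\cap J_1\neq\emptyset$ we are immediately done as above. So assume $J_0\cap J_1=\emptyset$. Then set $U_0=\bigcup_{j\in J_0}W_j$ and $U_1=\bigcup_{j\in J_1}W_j$: these are open, $V_0\subseteq^* U_0$ and $V_1\subseteq^* U_1$ (meaning the complement inside each has size $<\mf c$), and $U_0\cap U_1$ is covered by $\{W_j:j\notin J_0\cup J_1\}$ together with boundary effects — more precisely, a point in $U_0\cap U_1$ lies in some $W_j$ with $j\in J_0$ and some $W_{j'}$ with $j'\in J_1$, but I want to argue $U_0\cap U_1$ is small. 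Let me instead define $Y$ to be the set of points covered only by members $W_j$ with $j\notin J_0$; each such $W_j$ has $|W_j\cap V_0|<\mf c$, and since $J\setm J_0$ is countable, $|Y\cap V_0|<\mf c$, and similarly handling $V_1$. The key topological input is that $U_0$ and $U_1$ would then exhibit $V\setm Y$ (for an appropriate $Y\in[V]^{<\mf c}$ built from the $<\mf c$-sized leftovers) as a disjoint union of two nonempty relatively open sets, contradicting connectivity of $V\setm Y$.

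The step I expect to be the main obstacle is making this last separation argument precise: I need to choose $Y\in[V]^{<\mf c}$ so that $V\setm Y$ is genuinely disconnected, i.e. written as the disjoint union of two nonempty relatively clopen pieces. The honest way is: put $Y=\bigcup\{W_j:|W_j\cap V_0|<\mf c\text{ or }|W_j\cap V_1|<\mf c\}$ restricted to those $j$ — wait, that $Y$ could be large. Rather, I would take $Y$ to be the union of all the "bad" intersections $W_j\cap V_i$ that have size $<\mf c$; since $J$ is countable this $Y$ has size $<\mf c$. On $V\setm Y$, every $W_j$ that meets it at all now satisfies: if it meets $V_0\setm Y$ then $|W_j\cap V_0|=\mf c$ so $j\in J_0$, and if it meets $V_1\setm Y$ then $j\in J_1$; since $J_0\cap J_1=\emptyset$, no $W_j$ meets both $V_0\setm Y$ and $V_1\setm Y$. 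Hence $\{W_j\cap(V\setm Y):j\in J_0\}$ and $\{W_j\cap(V\setm Y):j\in J_1\}$ are two families of relatively open sets with disjoint unions covering $V\setm Y$, each union nonempty (as $V_i\setm Y\neq\emptyset$), exhibiting the disconnection. This contradicts the assumption, so $J_0\cap J_1\neq\emptyset$ and $r(G,2)=2$. The only subtlety to nail down carefully is the bookkeeping ensuring $Y$ stays of size $<\mf c$ and that the two open families really do partition $V\setm Y$ — that is exactly where one must use that $J$ is countable and $\operatorname{cf}(\mf c)>\omega$.
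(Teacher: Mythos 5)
Your argument is correct and is essentially the paper's own proof: both use separability to pass to a countable subcover of independent open sets, use that the failure of (1) forces each such set to lie in one class modulo a set of size $<\mf c$, sum the countably many exceptional sets using $\cf(\mf c)>\omega$, and read off a clopen partition of $V\setm Y$. The only (trivial) point you skip is the degenerate case $|V|<\mf c$, where (2) holds by deleting all but two points.
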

\begin{proof} Suppose that (1) fails and this is witnessed by the balanced partition $V_0,V_1$ of $V$. (2) clearly holds if $2\leq |V|<\mf c$ so let us suppose that $|V|=\mf c$.

For every $x\in V$ there is an open neighbourhood $B_x$ of $x$ so that $B_x$ is independent. 
As $B_x$ is independent and (1) fails, there must be a set $Y_x$ of size $<|V|$ and $i_x<2$ so that $B_x\setm Y_x\subseteq V_{i_x}$ for every $x\in V$. Now, there is a countable set $W$ so that $\{B_x:x\in W\}$ covers $V$ so $$V\setm Y= \bigcup \{B_x\setm Y:x\in W\}$$ where $Y=\bigcup \{Y_x:x\in W\}$. Now note that $V_i\setm Y$ is open in $V\setm Y$; indeed, if $z\in V_i\setm Y$ then there is $x\in W$ so that $z\in B_x\setm Y\subseteq V_{i_x}$ and hence $i=i_x$ and $B_x\setm Y$ is an open neighbourhood of $z$ in $V_i\setm Y$. Note that $V_i\setm Y\neq \emptyset$ as $|Y|<|V_i|=\mf c$ (and $c$ has uncountable cofinality). Now, the clopen partition $V\setm Y=(V_0\setm Y)\cup (V_1\setm Y)$ witnesses that $V\setm Y$ is not connected.

\end{proof}

We do need some connectivity assumption, as demonstrated by the next result.

\begin{obs} Suppose that $X\subseteq \mb R^n$ and $\{X_k:k<\ell\}$ is a clopen partition of $X$ into sets of size $|X|$. Then there is a metric $d$ inducing the usual topology on $X$ so that $r(G,2)>\ell$ where $G$ is the unit distance graph on $(X,d)$. 
\end{obs}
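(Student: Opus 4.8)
The idea is to use the clopen partition $\{X_k : k < \ell\}$ to manufacture a metric under which each block $X_k$ carries a copy of the bad configuration $K_\omega \otimes E_\omega$ (or at least a configuration witnessing $r(G,2) > \ell$), while keeping the overall topology the same. Concretely, I would first note that each $X_k$, being an infinite subset of $\mb R^n$, has size $\mf c$ and is metrizable; inside each $X_k$ I would fix a countable discrete (in the subspace from a suitable auxiliary metric) sequence of points converging to no limit, split into $\omega$ blocks $B^k_0, B^k_1, \dots$ each of size $|X|$ and pairwise at positive distance, plus a residual set. The plan is to define $d$ so that, restricted to each $X_k$, the unit-distance graph $G$ contains an induced copy of $K_\omega \otimes E_\omega$ with the $\ell$ blocks independent from each other (this is automatic since the $X_k$ are clopen and we may arrange cross-block distances never equal to $1$).

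The main technical step is the construction of the metric. I would not try to write an explicit formula; instead I would build $d$ by specifying it as a sum or max of the original Euclidean metric (scaled down so that no Euclidean unit distances survive inside the relevant countable sets) together with a "combinatorial" perturbation supported on the chosen countable sets of each block, chosen so that (i) $d$ still generates the original topology — here one uses that the perturbation is supported on a closed, scattered/discrete-in-itself set so it does not affect limits of sequences approaching other points — and (ii) within block $X_k$, for the countable family $\{B^k_t : t < \omega\}$ one has $d(x,y) = 1$ exactly when $x \in B^k_s$, $y \in B^k_t$ with $s \neq t$ (blow-up of $K_\omega$) while $d(x,y) \neq 1$ for $x,y$ in the same $B^k_t$. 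The verification that $d$ is a genuine metric (triangle inequality) and that it induces the usual topology is the routine but fiddly part, handled by making the perturbation small and locally constant away from the chosen discrete set.

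Granting the metric, the conclusion is quick: partition $V(G) = X$ into the $\ell \cdot (\ell$-ary refinement$)$ — more precisely, partition each $X_k$ into $\ell$ pieces... actually one only needs the partition $\{X_k : k<\ell\}$ itself together with the observation that it is balanced (each $|X_k| = |X|$). Suppose $A$ is independent and meets two classes, say $X_k$ and $X_{k'}$ with $k \neq k'$, each in a set of size $|X|$. Restricting $A$ to $X_k$: an independent set in $K_\omega \otimes E_\omega$ (on $B^k_\bullet$) meets at most one of the blow-up classes $B^k_t$, so... hmm, this shows $A \cap X_k$ can still be large if it avoids the $B^k_t$'s. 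So I would instead refine: inside $X_k$ choose the $B^k_t$ to exhaust a cofinal-in-size portion, i.e. arrange $X_k = \bigcup_t B^k_t \cup (\text{small set})$ with each block of size $|X|$, and additionally make every point of $X_k$ belong to some $B^k_t$; then any independent set lies in a single block plus a small set, hence has size $|X|$ but lives essentially in one $B^k_t$. Then split each $X_k$ further into $\ell$ sub-classes each hitting infinitely many blocks so that an independent set, confined to one block, meets only one sub-class largely. This gives $\ell \cdot$ (something) classes; to hit exactly "$r(G,2) > \ell$" I would instead directly partition $X$ into $\ell+1$ classes built from a single $K_\omega \otimes E_\omega$ spread across the whole space and argue no independent set meets two of them in a set of size $|X|$, exactly mirroring the argument that $r(K_\omega \otimes E_\omega, 3)$ fails.

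The expected main obstacle is reconciling the two competing demands on $d$: it must generate the original topology (forcing the perturbation to vanish in the limit along any convergent sequence) yet create a rich unit-distance structure (which wants many pairs at distance exactly $1$). The resolution is to place all the unit-distance pairs on a set that is closed and uniformly discrete in an auxiliary complete metric — so topologically "invisible" at its own closure — and to damp the original metric by a homeomorphism of $[0,\infty)$ that pushes all old unit distances away from $1$; checking the triangle inequality for the combined metric is where the real care is needed, and I would do it by realizing $d$ as a shortest-path (intrinsic) metric on a graph-like structure or as $\max$ of countably many pseudometrics, each manifestly satisfying the triangle inequality.
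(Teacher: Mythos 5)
There is a genuine gap: your proposal never converges to a construction that works, and each route you sketch runs into the same obstruction. If the unit-distance pairs you manufacture are all supported on a set $S$ with $|X\setm S|=|X|$ (e.g.\ a countable discrete set in each block), then an independent set of size $|X|$ can simply avoid $S$ altogether and meet \emph{every} class of whatever balanced partition you propose in a set of full size, so nothing is witnessed. You notice this (``$A\cap X_k$ can still be large if it avoids the $B^k_t$'s''), but your fix---making the $B^k_t$ cover $X_k$ and putting all cross-$B^k_t$ pairs at distance exactly $1$---forces each $B^k_t$ to be clopen in $X_k$ (any set whose points are at distance exactly $1$ from everything outside it is open together with its complement), a topological decomposition of $X_k$ you have no right to assume exists. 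Your first design is also oriented the wrong way: you put the edges \emph{inside} each $X_k$ and make distinct blocks mutually independent, but then the partition $\{X_k:k<\ell\}$ cannot witness $r(G,2)>\ell$ (an independent set could meet two blocks in full-size sets), and no alternative balanced $\ell$-partition is ever exhibited. The final pivot to ``a single $K_\oo\otimes E_\oo$ spread across the whole space'' is left entirely vague and suffers from the avoidance problem above.

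The statement is far easier than you make it, and the clopenness hypothesis is the whole point. Set $d(x,y)=\min\{1/2,\,|x-y|\}$ for $x,y$ in the same block $X_k$, and $d(x,y)=1$ for $x,y$ in different blocks. The triangle inequality is immediate (any two-step path joining different blocks has a summand equal to $1$, and truncation preserves the metric axioms within a block), and $d$ induces the usual topology precisely because each $X_k$ is open in $X$, so small Euclidean balls around $x\in X_k$ already avoid the other blocks. The resulting unit-distance graph is the complete $\ell$-partite graph with parts $X_0,\dots,X_{\ell-1}$: every cross-block pair is an edge and no within-block pair is. Hence every independent set lies inside a single $X_k$, and the given balanced partition $\{X_k:k<\ell\}$ itself witnesses $r(G,2)>\ell$. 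This is exactly the paper's one-line proof; no $K_\oo\otimes E_\oo$, no perturbation supported on scattered sets, and no intrinsic-metric machinery is needed.
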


In particular, $r(G,2)$ might not exists if $X$ has infinitely many connected components of size $|X|$.

\begin{proof} Simply find a metric $d$ so that the diameter of each $X_k$ is less than 1 while $d(x,y)=1$ if $x\in X_k,y\in X_{k'}$ for some $k<k'<\ell$. The partition $\{X_k:k<\ell\}$ witnesses $r(G,2)>\ell$.
\end{proof}

On the other hand, if  $X\subseteq \mb R^n$ and $\{X_k:k<\ell\}$ is a cover by sets of size $|X|$ which are connected even after the removal of fewer than $\mf c$ points (e.g. $X_k$ is connected and open) then $r(G,2)\leq \ell+1$ by Lemma \ref{toplemma}.

\subsection{Orthogonality graphs} Finally, let us take a look at another class of geometric graphs: let $G_{\mb R^n}$ be defined on vertices $\mb R^n\setm \{0\}$ so that $uv\in E(G_{\mb R^n})$ iff $u\perp v$ i.e. $u$ and $v$ are orthogonal vectors. It is clear that $G_{\mb R^n}$ is $K_{n+1}$-free so $r(G_{\mb R^n},m)$ exists for all $2\leq m\in \NN$.

\begin{prop}
For all $n\geq 2$, $r(G_{\mb R^n},2)=2$. 
\end{prop}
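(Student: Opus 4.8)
The plan is to follow the template of the unit distance case: reduce everything to producing suitable complete accumulation points, then pass to small balls. Throughout write $\mf c=2^{\aleph_0}$ and recall that $\cf(\mf c)>\oo$, so that --- since $\mb R^n$ is second countable --- every $W\subseteq\mb R^n$ of size $\mf c$ has all but fewer than $\mf c$ of its points as complete accumulation points of $W$. Applying this to a balanced partition $V_0\cup V_1$ of $\mb R^n\setm\{0\}$ and writing $C_i$ for the set of complete accumulation points of $V_i$, I get that each $V_i\cap C_i$, and hence each $C_i\setm\{0\}$, has size $\mf c$.

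The core of the argument will be the following claim, which is the analogue of Claim \ref{clm}: there are nonzero $u_0\in C_0$ and $u_1\in C_1$ with $u_0\not\perp u_1$. I would prove this by contradiction. If it fails, then every vector in $C_0\setm\{0\}$ is orthogonal to every vector in $C_1\setm\{0\}$, so by bilinearity the subspaces $L_i:=\operatorname{span}(C_i\setm\{0\})$ satisfy $L_0\perp L_1$. Put $M_0:=L_1^{\perp}$ and $M_1:=L_0^{\perp}$; since each $C_i\setm\{0\}$ is nonempty we have $L_i\ne\{0\}$, so both $M_0$ and $M_1$ are \emph{proper} linear subspaces of $\mb R^n$, and $C_i\setm\{0\}\subseteq L_i\subseteq M_i$, hence $C_i\subseteq M_i$. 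Consequently $V_i\setm M_i\subseteq V_i\setm C_i$ has size $<\mf c$, so, using $0\in M_0$,
$$\mb R^n\setm(M_0\cup M_1)=(V_0\cup V_1)\setm(M_0\cup M_1)\subseteq(V_0\setm M_0)\cup(V_1\setm M_1)$$
has size $<\mf c$. But a real vector space is never the union of two proper subspaces, so $\mb R^n\setm(M_0\cup M_1)$ is a nonempty open set and therefore has size $\mf c$ --- a contradiction. This step, and in particular the idea of replacing the accumulation-point sets by their spans and invoking that $\mb R^n$ is not covered by two proper subspaces, is what I expect to be the only real obstacle; everything else is routine. (Note that $V_0\cup V_1=\mb R^n\setm\{0\}$ is used here essentially, which is why only the partition version $r(G_{\mb R^n},2)=2$ is claimed.)

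Granting the claim, fix such $u_0,u_1$ and choose $\varepsilon>0$ small enough that the balls $B_i$ of radius $\varepsilon$ around $u_i$ miss $0$ and satisfy $\langle x,y\rangle\ne 0$ for all $x,y\in B_0\cup B_1$; this is possible because the inner product is continuous on $\mb R^n\times\mb R^n$ and $\langle u_i,u_j\rangle\ne 0$ for all $i,j<2$ (for $i=j$ because $u_i\ne 0$, for $i\ne j$ by the claim). Then $A:=(B_0\cap V_0)\cup(B_1\cap V_1)$ is a subset of $\mb R^n\setm\{0\}$ in which no two vectors are orthogonal, i.e. an independent set of $G_{\mb R^n}$, and since $u_i$ is a complete accumulation point of $V_i$ we have $|A\cap V_i|=|B_i\cap V_i|=\mf c$ for $i<2$. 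Thus $r(G_{\mb R^n},2)\le 2$, and the reverse inequality is trivial.
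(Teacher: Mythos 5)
Your argument is correct, but it takes a genuinely different route from the paper's. The paper derives this proposition as an instance of its general topological Lemma \ref{toplemma}: $\mb R^n\setm\{0\}$ has an open cover by independent sets, and removing fewer than $\mf c$ points cannot disconnect it (for $n\geq 2$), so alternative (2) of that lemma fails and $r(G_{\mb R^n},2)=2$ follows. You instead adapt the paper's proof of the \emph{unit distance} case (Claim \ref{clm}) directly to orthogonality: produce non-orthogonal complete accumulation points $u_0,u_1$ of the two classes and pass to small balls. The one place where orthogonality genuinely differs from unit distance is your contradiction step, and you handle it correctly: if all pairs from $C_0\setm\{0\}$ and $C_1\setm\{0\}$ were orthogonal, bilinearity forces $C_i\setm\{0\}$ into a proper subspace $M_i$, and since a real vector space is never the union of two proper subspaces, $\mb R^n\setm(M_0\cup M_1)$ is a nonempty open set of size $\mf c$ consisting of points that are accumulation points of neither class --- contradiction. (The maximality trick used for Claim \ref{clm} in the paper would not transfer verbatim here, since orthogonal complements of single vectors are hyperplanes of size $\mf c$ rather than small sets; your span argument is the right replacement.) The trade-off: the paper's route via Lemma \ref{toplemma} is reusable for arbitrary metrics inducing the usual topology and isolates the connectivity obstruction, whereas your argument is self-contained, avoids the connectivity lemma entirely, and exploits the linear structure specific to orthogonality. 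All the supporting steps --- that all but $<\mf c$ points of a size-$\mf c$ set are complete accumulation points (second countability plus $\cf(\mf c)>\oo$), the continuity/openness argument for choosing $\varepsilon$, and the trivial lower bound --- are sound.
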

\begin{proof}
 Recall that $\mb R^n\setm Y$ is connected whenever $|Y|<\mf c$. Also, for any $x\neq 0$ there is small open ball around $x$ which is independent in $G_{\mb R^n}$. Hence, Lemma \ref{toplemma} can be applied.
\end{proof}

Unfortunately, finding $r(G_{\mb R^n},m)$ will be much more difficult in general. Let us show first that  finding $r(G_{\mb R^n},m)$ and $r^*(G_{\mb R^n},m)$ will be equally hard. %In our particular case, the connection is even closer:

\begin{prop}\label{starprop}
For all $n,m\geq 2$, $r(G_{\mb R^n},m)= r^*(G_{\mb R^n},m-1)+1$.
\end{prop}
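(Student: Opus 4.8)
The plan is to prove the two inequalities $r(G_{\mb R^n},m)\ge r^*(G_{\mb R^n},m-1)+1$ and $r(G_{\mb R^n},m)\le r^*(G_{\mb R^n},m-1)+1$ separately, exploiting the homogeneity of $\mb R^n$ under scaling together with the fact that orthogonality is scale-invariant. The key observation throughout is the following: if $V_i\subseteq \mb R^n\setm\{0\}$ has size $\mf c$, then by a pigeonhole-on-direction argument we may pass to a subset of size $\mf c$ that still accumulates at a common direction; combined with the scaling action, a sufficiently small ball can be made independent in $G_{\mb R^n}$, so copies of $E_\omega$ (actually $E_{\mf c}$) are abundant inside any large colour class. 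This is essentially the content of the proof that $r(G_{\mb R^n},2)=2$ and will be reused.

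\emph{Lower bound.} Suppose $r^*(G_{\mb R^n},m-1)=r$, witnessed by sets $U_0,\dots,U_{r-1}\in[\mb R^n\setm\{0\}]^{\mf c}$ such that no independent set meets $m-1$ of them in a set of size $\mf c$. I would like to produce a \emph{balanced partition} of $\mb R^n\setm\{0\}$ into $r+1$ classes with no independent set meeting $m$ classes in a set of full size. First extract from the $U_i$ pairwise disjoint sets $U_i'\subseteq U_i$ of size $\mf c$ still witnessing failure of $r^*(G,m-1)$ (shrinking preserves the property, as in Observation \ref{starobs}/Observation \ref{monobs}). Then throw all the remaining points of $\mb R^n\setm\{0\}$, together with one of the classes, into a single extra class $W$; more precisely take the partition $U_0',\dots,U_{r-2}',U_{r-1}'\cup(\mb R^n\setm(\{0\}\cup\bigcup_{i<r-1}U_i'))$. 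Wait — this is only $r$ classes. Instead the right move is: keep $U_0',\dots,U_{r-1}'$ and add $W=\mb R^n\setm(\{0\}\cup\bigcup_{i<r}U_i')$ as the $(r+1)$-st class; if $|W|<\mf c$ absorb it into $U_{r-1}'$ (getting only $r$ classes, hence $r(G,m)\ge r+1$ a fortiori by Observation \ref{monobs}(3)), and if $|W|=\mf c$ keep all $r+1$ classes. In the latter case any independent set $A$ meeting $m$ classes in size-$\mf c$ sets meets at least $m-1$ of the $U_i'$ in size-$\mf c$ sets, contradicting the choice of the $U_i'$. Thus $r(G_{\mb R^n},m)\ge r+1=r^*(G_{\mb R^n},m-1)+1$.

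\emph{Upper bound.} Let $r=r^*(G_{\mb R^n},m-1)+1$ and fix a balanced partition $\{V_i:i<r\}$ of $\mb R^n\setm\{0\}$. Since $r-1=r^*(G_{\mb R^n},m-1)$, if at least $r-1$ of the classes had size $\mf c$ we could immediately find an independent set meeting $m-1$ of them fully — but we need $m$ classes, so we must squeeze one more class out, and this is where the scaling trick and the extra ``$+1$'' pay off. The idea: among the $r$ classes at least $r-1\ge r^*(G,m-1)$ have size $\mf c$ (as the partition is balanced and $\mb R^n$ is uncountable, in fact all do). Pick any class $V_{i_0}$; using the direction-pigeonhole plus scaling, find inside it a size-$\mf c$ subset $B$ contained in a ball so small that $B$ is $G_{\mb R^n}$-independent \emph{and} $B$ is orthogonality-free relative to some point set — more carefully, split off a small independent chunk $B\subseteq V_{i_0}$ and an independent chunk and arrange (shrinking balls, as in Claim \ref{clm}) that $B$ is "orthogonally separated" from the other classes on a size-$\mf c$ piece. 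Then apply the definition of $r^*(G_{\mb R^n},m-1)$ to the remaining $r-1$ classes to get an independent $A_0$ meeting $m-1$ of them in size-$\mf c$ sets, further shrink so that $A_0$ is orthogonal-free with $B$ (again by the ball argument, removing $<\mf c$ points), and set $A=A_0\cup B$; this is independent and meets $m$ classes fully. Hence $r(G_{\mb R^n},m)\le r^*(G_{\mb R^n},m-1)+1$.

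\emph{Main obstacle.} The delicate point is the upper bound: making the extra class $B$ and the set $A_0$ (coming from the $r^*$-witness on the other classes) \emph{simultaneously} independent as a union, while $A_0$ is only obtained abstractly from the definition of $r^*$ and we have little control over it. The resolution I expect to use is that $A_0$ has size $\mf c$, so it has a complete accumulation direction; rescaling and passing to a small ball around a common accumulation point (as in Claim \ref{clm}) removes fewer than $\mf c$ points and kills all orthogonalities between $A_0$ and a fixed small independent ball $B$ — here one crucially uses that for a fixed ball $B$ far from the origin, the set of directions orthogonal to some element of $B$ is nowhere dense, so a generic small ball around an accumulation point of $A_0$ avoids it. Getting this bookkeeping right (and checking the singular/cofinality-free nature of $\mf c$ causes no trouble, which it doesn't since $\operatorname{cf}(\mf c)>\omega$) is the technical heart; everything else is routine shrinking as already done for $r(G_{\mb R^n},2)=2$.
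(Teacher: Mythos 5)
Your lower bound is essentially the second inequality of Observation \ref{starobs} (1), which is all the paper invokes for that direction, but watch the indexing: $r^*(G_{\mb R^n},m-1)=r$ is witnessed by $r-1$ sets (not $r$), and the bad partition you build from them must have exactly $r$ classes; an $(r+1)$-class bad partition would prove $r(G_{\mb R^n},m)\ge r+2$, contradicting the upper bound you are about to prove. (Also, when the leftover $W$ is small you should split one $U_i'$ in two rather than merge, or you drop to $r-1$ classes.) This is repairable bookkeeping.

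The upper bound, however, has a real gap in the order of the choices. You fix the special chunk $B\subseteq V_{i_0}$ \emph{before} invoking $r^*(G_{\mb R^n},m-1)$ on the other classes, and then try to delete the orthogonalities between the black-box output $A_0$ and $B$ by shrinking. The justification offered --- that the set of directions orthogonal to some element of a small ball $B$ far from the origin is nowhere dense --- is false: for $B$ the $\delta$-ball around $x$ one computes $B^\perp=\{y:|\langle y,x\rangle|\le \delta|y|\}$, a double cone with nonempty interior around the hyperplane $x^\perp$. Worse, an entire colour class can sit inside $x^\perp$ itself (e.g.\ in $\mb R^2$ take $V_{i_0}$ along the $x$-axis and another class $V_j$ along the $y$-axis, so that $G[V_{i_0},V_j]\cong K_{\mf c,\mf c}$); if the $r^*$-output places a full-size piece into such a class, every point of that piece is adjacent to points of $B$, its accumulation points are orthogonal to the centre of $B$, shrinking $A_0$ removes $\mf c$ points rather than fewer than $\mf c$, and shrinking $B$ around a fixed centre never helps. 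The paper's proof reverses the quantifiers: by Baire category it selects as the special class one $V_0$ that meets every subball of some ball $U$ in $\mf c$ points, \emph{then} applies $r^*$ to the remaining $r-1$ classes to get $A_1,\dots,A_{m-1}$ with accumulation points $x_i$, \emph{then} chooses the balls $B_i$ so small that $\bigcup_i B_i^\perp$ is a thin neighbourhood of the finitely many hyperplanes $x_i^\perp$ and hence misses a subball of $U$, and only at the very end picks $A_0\subseteq V_0\setm\bigcup_i B_i^\perp$. The freedom to decide which class is special, and to calibrate the cones after the $x_i$ are known, is exactly what your ordering gives up; as written, the step ``further shrink so that $A_0$ is orthogonal-free with $B$'' cannot be carried out in general.
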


Let $x^\perp=\{y\in \mathbb R^n:x\perp y\}$ for $x\in \mathbb R^n$ and $B^\perp=\bigcup\{x^\perp:x\in B\}$ for $B\subseteq \mathbb R^n$.

\begin{proof}
 Let us prove first that $r(G_{\mb R^n},m)\leq r=r^*(G_{\mb R^n},m-1)+1$. Take a balanced $r$-partition $\{V_i:i<r\}$; we can suppose that  $V_0$ is dense in some $n$-dimensional ball by the Baire category theorem. Select  $A_i\subseteq V_i$ of size $\mf c$ for $1\leq i<m$ so that $\bigcup\{A_i:1\leq i<r\}$ is independent and let $x_i\in A_i$ be complete accumulation points of $A_i$. Now, it is easy to see that if we take small enough balls $B_i$ around $x_i$ then $V_0\setm \bigcup\{B_i^\perp:i=1\dots m-1\}$ has size $\mf c$. Hence, if $A_0\subseteq V_0\setm \bigcup\{B_i^\perp:i=1\dots m-1\}$ is of size $\mf c$ and independent then $A_0\cup (A_1\cap B_1)\cup\dots \cup(A_{m-1}\cap B_{m-1})$ is the desired independent set.

Equality now follows from Observation \ref{starobs} (1).

%To see the equality, take $V_i$ of size $\mf c$ for $1\leq i<r^*(G_{\mb R^n},m-1)$ so that no independent set meets $m-1$ classes. Let $V_0$ be the complement of $\bigcup V_i$. Now, there is no independent set which meets $m$ classes.
\end{proof}

 Now, we characterize $r^*(G_{\mb R^n},m)$ slightly differently.

\begin{obs}
For all natural numbers $n,m\geq2$, $r^*(G_{\mb R^n},m)$ is the minimal number $\hat r=\hat r(n,m)$ such that any $\hat r$ non zero vectors of $\mb R^n$ contain $m$ pairwise non orthogonal points. 
\end{obs}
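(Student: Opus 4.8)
The plan is to prove the two inequalities $r^*(G_{\mb R^n},m)\le \hat r(n,m)$ and $r^*(G_{\mb R^n},m)\ge \hat r(n,m)$ separately, passing between the ``$\mf c$-sized sets'' picture of $r^*$ and the ``finitely many vectors'' picture of $\hat r$ via complete accumulation points and the scaling-invariance of orthogonality. Two elementary facts will be used repeatedly: first, since $\mb R^n$ is Lindel\"of and $\mf c$ has uncountable cofinality, every $W\subseteq \mb R^n\setm\{0\}$ with $|W|=\mf c$ has $\mf c$ many complete accumulation points lying in $W$ (hence nonzero); second, if $x\neq 0$ then every sufficiently small open ball $B$ around $x$ is $G_{\mb R^n}$-independent, since $u\cdot v$ stays close to $|x|^2>0$ for $u,v\in B$, and moreover if $x\cdot y\neq 0$ then for small enough balls $B\ni x$, $B'\ni y$ there is no edge of $G_{\mb R^n}$ between $B$ and $B'$.

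For $r^*(G_{\mb R^n},m)\le \hat r(n,m)$ I would put $r=\hat r(n,m)$, take arbitrary $V_0,\dots,V_{r-1}\in[\mb R^n\setm\{0\}]^{\mf c}$, and choose complete accumulation points $x_i$ of $V_i$ recursively so that $x_i\notin\{x_0,\dots,x_{i-1}\}$; this is possible because each $V_i$ has $\mf c$ many complete accumulation points while only finitely many are forbidden at each step. Now $x_0,\dots,x_{r-1}$ are $\hat r(n,m)$ distinct nonzero vectors, so by definition of $\hat r$ there are indices $i_1<\dots<i_m$ with $x_{i_j}\cdot x_{i_k}\neq 0$ for all $j\neq k$ (and $x_{i_j}\cdot x_{i_j}=|x_{i_j}|^2\neq 0$ too). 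Picking open balls $B_{i_j}$ around $x_{i_j}$ small enough that each $B_{i_j}$ is independent and there is no edge between $B_{i_j}$ and $B_{i_k}$ for $j\neq k$, the set $A=\bigcup_{j=1}^{m}(B_{i_j}\cap V_{i_j})$ is independent, and $|A\cap V_{i_j}|=\mf c$ for every $j$ since $B_{i_j}$ is an open neighbourhood of the complete accumulation point $x_{i_j}$ of $V_{i_j}$. Thus $|\{i<r:|A\cap V_i|=\mf c\}|\ge m$.

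For $r^*(G_{\mb R^n},m)\ge \hat r(n,m)$, minimality of $\hat r(n,m)$ gives a set $S=\{s_0,\dots,s_{p-1}\}$ of $p=\hat r(n,m)-1$ distinct nonzero vectors with no $m$ pairwise non-orthogonal members, i.e. $\alpha(G_{\mb R^n}[S])\le m-1$. Set $V_i=\{t s_i:t>0\}$ for $i<p$; each $V_i$ has size $\mf c$. If $A$ is independent and $I=\{i<p:|A\cap V_i|=\mf c\}$, choose distinct $a_i\in A\cap V_i$ and write $a_i=t_is_i$ with $t_i>0$; since orthogonality is invariant under nonzero scaling, $\{a_i:i\in I\}\subseteq A$ being independent forces the $|I|$ distinct vectors $\{s_i:i\in I\}$ to be pairwise non-orthogonal, so $|I|\le m-1$. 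Hence this family witnesses $r^*(G_{\mb R^n},m)>\hat r(n,m)-1$.

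The one delicate point is the bookkeeping in the first inequality: one must make sure the accumulation points are both nonzero \emph{and distinct}, so that the purely combinatorial definition of $\hat r(n,m)$ (phrased in terms of $\hat r$ distinct nonzero vectors) applies verbatim; without distinctness one could lose a factor of roughly $m-1$ in the number of available directions. Everything else is the routine verification that small balls around nonzero vectors are independent and respect non-orthogonality, together with the remark that $\hat r(n,m)\le R(n+1,m)<\infty$ (as $G_{\mb R^n}$ is $K_{n+1}$-free), so that an extremal $S$ exists.
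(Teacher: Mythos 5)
Your proof is correct and follows essentially the same route as the paper: complete accumulation points plus small balls for $r^*\le\hat r$, and classes of scalar multiples of an extremal vector family for $r^*\ge\hat r$ (the paper states this direction contrapositively, but the construction is identical). Your extra bookkeeping — ensuring the accumulation points are distinct and noting $\hat r\le R(n+1,m)<\infty$ — only tightens details the paper leaves implicit.
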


\begin{proof}
 Let us show $r^*(G_{\mb R^n},m)\leq \hat r$ first: let $V_i\subseteq \mb R^n$ be of size $\mf c$ and pick a complete accumulation point $x_i\in V_i$ for each $i<\hat r$. By the definition of $\hat r$, $\{x_i:i\in I\}$ is pairwise non orthogonal for some set $I\subseteq \hat r$ of size $m$. If $B_i$ is a small enough ball around $x_i$ then $\bigcup\{V_i\cap B_i:i\in I\}$ is the desired independent set.  

On the other hand, take $r^*(G_{\mb R^n},m)$ points $x_i$ and let $V_i$ denote the set of nonzero scalar multiples of $x_i$. Now, if $A\subseteq \bigcup \{V_i:i<r^*(G_{\mb R^n},m)\}$ is the independent set which intersects $m$ of the sets $V_i$ then $\{x_i:|A\cap V_i|\neq \emptyset\}$ must be pairwise non orthogonal. Hence $r^*(G_{\mb R^n},m)\geq \hat r$ holds as well.
\end{proof}

In other words, the largest set $A$ in $\mb R^n$ so that any $B\in [A]^{m+1}$ contains two perpendicular vectors has size $r^*(G_{\mb R^n},m+1)-1$. This number, denoted by $\alpha(n,m)$ was introduced by P. Erd\H os and investigated by several people \cite{rosenfeld, stanley, alon}.  
Let us summarize the known results. Erd\H os conjectured that $\alpha(n,m)=nm$ for all $n,m$ (see \cite{rosenfeld, stanley}) which is translated as $r^*(G_{\mb R^n},m)=n(m-1)+1$. Note that this is true if the points are in general position i.e. any $k$ of them  spans a $k$ dimensional subspace (for $k\leq n$). Indeed, if $A$ is general then we can actually extend any $k$-element pairwise non orthogonal set into an $m$ element pairwise non orthogonal set. To see this, fix $n$ and prove by induction on $m$: fix $k$ points $x_1\dots x_{k}$ which are pairwise non orthogonal; remove $x_i$ and $x_i^\perp\cap A$ from $A$. Note that $|x^\perp\cap A|\leq n-1$ for all $x$ in $A$ hence we still have $n(m-k-1)+1$ points. So, we can select $m-k$ additional vectors which are pairwise non orthogonal using the inductive hypothesis. 

The conjecture in general was disproved by Z. F\"uredi and  R. Stanley \cite{stanley} by showing that there are 24 vectors in $\mb R^4$ without 6 vectors being pairwise non orthogonal. The currently known best lower bound is due to N. Alon and M. Szegedy \cite{alon}: their result shows that there is a constant $\delta>0$ so that $$r^*(G_{\mb R^n},m)> n^{\frac{\delta\log(m+1)}{\log\log(m+1)}}$$ for every large enough $m$ and $n\geq 2\log m$. 

On the other hand, the following upper bound follows from \cite{stanley}:

$$r^*(G_{\mb R^n},m)\leq (1+\text{o}(1))\sqrt{\frac{n\pi}{8}}2^{n/2}(m-1)+1$$ for any $n,m$.

Hence, by Proposition \ref{starprop}, the next corollary holds.

\begin{cor}
  $$n^{\frac{\delta\log(m)}{\log\log(m)}}+1<r(G_{\mb R^n},m) \leq (1+o(1))\sqrt{\frac{n\pi}{8}}2^{n/2}(m-2)+2$$

Here, the lower bound holds for all large enough $m$ and $n\geq 2\log(m)$; the upper bound holds for all $n,m$.
\end{cor}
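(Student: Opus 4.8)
The plan is to combine Proposition~\ref{starprop} with the two estimates on $r^*(G_{\mb R^n},m)$ recorded immediately before the statement. Proposition~\ref{starprop} gives the exact identity $r(G_{\mb R^n},m)=r^*(G_{\mb R^n},m-1)+1$ for all $n,m\geq 2$, so both inequalities in the corollary will follow by substituting $m-1$ for the free parameter in the corresponding bound on $r^*$ and then adding $1$. In other words, once Proposition~\ref{starprop} and the cited results of \cite{alon,stanley} are in hand, the corollary is pure bookkeeping.

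For the \emph{lower bound}, I would apply the Alon--Szegedy estimate $r^*(G_{\mb R^n},k)>n^{\frac{\delta\log(k+1)}{\log\log(k+1)}}$ with $k=m-1$. The exponent then becomes $\frac{\delta\log m}{\log\log m}$, and adding the $+1$ from Proposition~\ref{starprop} gives $r(G_{\mb R^n},m)=r^*(G_{\mb R^n},m-1)+1>n^{\frac{\delta\log m}{\log\log m}}+1$. One must only observe that the hypotheses transfer: if $m$ is large enough and $n\geq 2\log m$ then (after harmlessly enlarging the threshold for $m$ and absorbing a constant factor into $\delta$, neither of which affects the displayed inequality) the Alon--Szegedy hypotheses hold at $k=m-1$ as well. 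For the \emph{upper bound}, I would use the F\"uredi--Stanley-type estimate $r^*(G_{\mb R^n},k)\leq (1+o(1))\sqrt{\frac{n\pi}{8}}\,2^{n/2}(k-1)+1$, again with $k=m-1$, obtaining $r^*(G_{\mb R^n},m-1)\leq (1+o(1))\sqrt{\frac{n\pi}{8}}\,2^{n/2}(m-2)+1$; adding $1$ via Proposition~\ref{starprop} yields $r(G_{\mb R^n},m)\leq (1+o(1))\sqrt{\frac{n\pi}{8}}\,2^{n/2}(m-2)+2$ with no restriction on $n,m$, since the underlying bound on $r^*$ is unrestricted.

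There is no genuine obstacle in this last step: the substantive work lies in Proposition~\ref{starprop} (which reduces the orthogonality-graph question to Erd\H os's problem about pairwise non-orthogonal subsets) and in the cited external bounds. The only point requiring a moment's care is consistency of the two shifts, namely the index change $m\mapsto m-1$ and the additive constant $+1$, and checking that the side conditions ``$m$ large'' and ``$n\geq 2\log m$'' in the lower bound indeed survive the shift.
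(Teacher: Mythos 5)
Your proposal is correct and is exactly the paper's argument: the paper derives the corollary in one line by applying Proposition~\ref{starprop} to the Alon--Szegedy lower bound and the F\"uredi--Stanley upper bound with the index shifted from $m$ to $m-1$. Your extra remark that the side conditions survive the shift (since $n\geq 2\log m$ implies $n\geq 2\log(m-1)$) is a harmless and sensible addition.
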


There is very little known about the exact values of $\alpha(n,m)$ or, equivalently, the values of $r^*(G_{\mb R^n},m)$. Clearly, $r^*(G_{\mb R^n},2)=n+1$. It is easy to see that $r^*(G_{\mb R^2},m)=2(m-1)+1$ and a result of M. Rosenfeld \cite{rosenfeld} yields $r^*(G_{\mb R^n},3)=2n+1$; in particular, the conjecture of Erd\H os still holds for these cases. Now, Proposition \ref{starprop} yields the following.

\begin{cor}
$r(G_{\mb R^2},m)=2(m-1)$, $r(G_{\mb R^n},3)=n+2$ and $r(G_{\mb R^n},4)=2n+2$ for all $n,m$.
\end{cor}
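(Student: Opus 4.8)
The plan is to derive all three equalities directly from Proposition \ref{starprop}, which gives $r(G_{\mb R^n},m)=r^*(G_{\mb R^n},m-1)+1$, by substituting the values of $r^*(G_{\mb R^n},\cdot)$ that are available in the relevant small cases. So the real content reduces to pinning down $r^*(G_{\mb R^n},2)$, $r^*(G_{\mb R^2},m)$, and $r^*(G_{\mb R^n},3)$, after which each equality is a one-line arithmetic substitution.

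First I would record $r^*(G_{\mb R^n},2)=n+1$. Using the characterization of $r^*(G_{\mb R^n},m)$ as the least $\hat r$ such that any $\hat r$ nonzero vectors of $\mb R^n$ contain $m$ pairwise non-orthogonal ones, this is exactly the statement that $n+1$ vectors in $\mb R^n$ cannot all be pairwise orthogonal, while $n$ of them can. Next, $r^*(G_{\mb R^2},m)=2(m-1)+1$: restricting the orthogonality relation to distinct directions (lines through the origin) in $\mb R^2$, each direction is orthogonal to exactly one other direction, so the resulting graph is a disjoint union of edges and isolated points; hence any $2(m-1)+1$ directions contain a pairwise non-orthogonal subset of size $m$, and $m-1$ mutually orthogonal pairs of directions show this is sharp (repeated directions only help, being mutually non-orthogonal). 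Finally, for $r^*(G_{\mb R^n},3)=2n+1$ I would simply invoke Rosenfeld's theorem \cite{rosenfeld}; this is the one genuinely non-elementary input.

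With these three values in hand, Proposition \ref{starprop} yields $r(G_{\mb R^2},m)=r^*(G_{\mb R^2},m-1)+1=(2(m-2)+1)+1=2(m-1)$, then $r(G_{\mb R^n},3)=r^*(G_{\mb R^n},2)+1=n+2$, and $r(G_{\mb R^n},4)=r^*(G_{\mb R^n},3)+1=2n+2$. The main obstacle is not located in the present argument: the substitution and the two elementary evaluations of $r^*$ are routine, and the single hard ingredient, the equality $r^*(G_{\mb R^n},3)=2n+1$, is imported from Rosenfeld's work rather than reproved here.
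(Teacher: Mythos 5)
Your proposal is correct and follows essentially the same route as the paper: the paper records $r^*(G_{\mb R^n},2)=n+1$, $r^*(G_{\mb R^2},m)=2(m-1)+1$, and Rosenfeld's $r^*(G_{\mb R^n},3)=2n+1$ immediately before the corollary and then applies Proposition \ref{starprop}, exactly as you do. Your justifications of the two elementary evaluations via the finite characterization of $r^*(G_{\mb R^n},m)$ are sound and merely make explicit what the paper leaves as ``clearly'' and ``easy to see.''
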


The smallest unknown value to us is $r(G_{\mb R^3},6)$ or equivalently $r^*(G_{\mb R^3},5)$.
\medskip

Finally, we mention that the chromatic number of $G_{\mb R^3}$ is 4  while, somewhat surprisingly, $\chi(G_{\mb R^3}[\mb Q^3])=3$ \cite{orthchrom}. As Corollary \ref{chromcor} can be easily extended to $r^*(G,m)$, we get $r^*(G_{\mb R^3}[\mb Q^3],m)=3(m-1)+1$ as predicted by Erd\H os.

\section{Open problems}\label{probsec}

We close our paper with a list of open problems that we found the most interesting.

\subsection{Questions about $m\mapsto r(G,m)$ in general}

\begin{prob}  Is there a single $K_n$-free graph $G$ so that $r(G,m)=\dr(n,m)$ for all $2\leq m \in \NN$?
\end{prob}

We are not sure how fast $r(G,m)$ might grow for a fixed graph $G$.

\begin{prob}
Suppose that $g:\omega\to \omega$ is monotone increasing. Is there a single graph $G$ so that $g(m)<r(G,m)<\infty$ for all $2\leq m \in \NN$?
\end{prob}

Note that if $g(m)>\dr(n,m)$ for some $m\in \NN$ then  $G$ cannot be $K_n$-free.\\ %Naturally, $K_{\oo}$ cannot embed into $G$.

Regarding finite graphs and Theorem \ref{compactness} the obvious question is to determine $N=N(n,m,\ell)$.

\begin{prob}  Estimate/express the function $N=N(n,m,\ell)$ from Theorem \ref{compactness}.
\end{prob}

Next, we mention a question of more set theoretical flavour. The existence of the numbers $r(G,m)$ for a graph $G$ of size $\kappa$ clearly implies that $G$ contains independent sets of size $\kappa$. The same conclusion follows from Hajnal's Set Mapping Theorem \cite{hajnal}: if $\lambda<\kappa$ and each vertex $v$ of a graph $G$ has degree $<\lambda$ then $G$ has an independent set of size $\kappa$. Hence, our question is if one can strengthen Hajnal's theorem as follows.

\begin{prob} Suppose that $\lambda<\kappa$ and each vertex $v$ of a graph $G$ has degree $<\lambda$. Does $r(G,m)=m$ or even $r^*(G,m)=m$ hold for all/some $2\leq m\in \mb N$? 
\end{prob}

Finally, Proposition \ref{findegobs} about countable flat graphs opens the question of calculating $r(G,m)$ for uncountable flat graphs or some subset of them. Flatness is closely related to model-theoretic stability \cite{ziegler} so trying to calculate $r(G,m)$ in classes of model-theoretically tame graphs is another venue likely worth investigating. 

\subsection{Problems on balanced embeddings of graphs}

A natural way to strengthen Theorem \ref{balembed} would be answering the next problem.

\begin{prob}
 Does $H_n\barr(G)^1_2$ hold if $n\geq 3$ and $G$ is an arbitrary subgraph of $\half$? 
\end{prob}

We also ask if the graphs $H_n$ for $n\in \mathbb N$ are essentially the only graphs satisfying Theorem \ref{balembed}.

\begin{prob}
 Characterize those (countable) graphs $H$ so that $H\barr(G[A,B])^1_2$ holds for all finite bipartite $G[A,B]$.
\end{prob}

For example, if $H$ is isomorphic to some $H_n$ modulo a finite set then $H\barr(G[A,B])^1_2$ holds.
\medskip

Now, it would be natural to study balanced embeddings of non bipartite graphs as well. For the simplest case, let us look at $K_3$: suppose that $H$ is a graph so that whenever $V_0\cup V_1\cup V_2$ is a balanced partition of $H$ then there is a copy of $K_3$ with vertices in all dinstinct classes. Let us denote this relation with $H\barr(K_3)^1_3$. 

Any complete graph $H$ satisfies $H\barr(K_3)^1_3$ but not $H_n$; indeed, $H$ cannot contain a copy of $E_{\oo,\oo}$ if $H$ is countable and $H\barr(K_3)^1_3$ and, in turn, any pair of infinte vertex sets $A,B$ is a rich pair. So how sparse can a graph $H$ be while still $H\barr(K_3)^1_3$ holds? For example, it is not hard to see that the uncountable graph $G$ defined at the end of Section \ref{propssec} satisfies $G\barr(K_3)^1_3$.

\begin{prob}
 Characterize those (countable) graphs $H$ so that $H\barr(K_3)^1_3$ holds.
\end{prob}

\subsection{Finding the exact value of $r(G,m)$ for specific graphs}

Finally, it would be interesting to see the exact values of $r(G,m)$ determined for any particular $K_n$-free graphs. 

\begin{prob}
 Let $G_{\mb R^3}$ denote the orthogonality graph on $\mb R^3\setm \{0\}$. Is $r^*(G_{\mb R^3},m)=3(m-1)+1$ for all $m\geq 2$?
\end{prob}

\section{Acknowledgements}

Part of this work was completed while the second and third authors were Postdoctoral Scholars at the University of Calgary supported in part by NSERC of Canada Grant \# 10007490 and PIMS. The third author was also supported in part by the FWF Grant I1921.

% 
% 
% 
% 
% \bibliography{mybib}{}
% \bibliographystyle{plain}
% 

\end{document}